\documentclass[12pt]{amsart}

\usepackage{ishai}


\title[The polylog quotient and the Goncharov quotient]{The polylog quotient and the Goncharov quotient in computational Chabauty-Kim theory I}

\author{David Corwin, Ishai Dan-Cohen}


\date{\today}

\usepackage{fullpage}
\usepackage{colonequals}
\usepackage{microtype}
\begin{document}

\maketitle

\raggedbottom
\SelectTips{cm}{11}

\section*{Abstract}

\textit{Polylogarithms} are those \textit{multiple polylogarithms} that factor through a certain quotient of the de Rham fundamental group of the thrice punctured line known as the \emph{polylogarithmic quotient}. Building on work of Dan-Cohen, Wewers, and Brown, we push the computational boundary of our explicit motivic version of Kim's method in the case of the thrice punctured line over an open subscheme of $\Spec{\mathbb{Z}}$. To do so, we develop a greatly refined version of the algorithm of Dan-Cohen tailored specifically to this case, and we focus attention on the polylogarithmic quotient. This allows us to restrict our calculus with motivic iterated integrals to the so-called depth-one part of the mixed Tate Galois group studied extensively by Goncharov. We also discover an interesting consequence of the symmetry-breaking nature of the polylog quotient that forces us to symmetrize our polylogarithmic version of Kim's conjecture.

In this first part of a two-part series, we focus on a specific example, which allows us to verify an interesting new case of Kim's conjecture.

\tableofcontents

\section{Introduction}


\subsection{The Context for Our Work}

\subsubsection{The $S$-Unit Equation and Kim's Method}

For a ring $R$, the \emph{unit equation} asks for solutions to $x+y=1$ for which $x$ and $y$ are units in the ring $R$. If we let $X=\Poneminusthreepoints$, this is equivalent to finding elements of $X(R)$. For $R=\Oc_K[1/S]$ for a number field $K$ and a finite set $S$ of finite places, this is known as the \emph{$S$-unit equation}. The interest in the unit equation comes from the 1929 theorem of Siegel that the $S$-unit equation has finitely many solutions for fixed $K$ and $S$.

In 2004, Kim (\cite{kim05}) reproved this theorem in the case $K=\Qb$ by showing that there were nonzero Coleman functions on $X(\Zp)$ that vanish on $X(R)$, using a method reminiscent of those of Chabauty (\cite{Chabauty41}) and Skolem. Kim later extended this proof to totally real fields in \cite{KimTangential}. 
More specifically, for each $n \in \Zb_{>0}$, $Z=\Spec{R}$ and $\pfff \in Z$ lying over $p$, Kim's method produces an ideal $\Ic^Z_{n,\mathrm{Kim}}$ in the ring of Coleman functions on $X(Z_{\pfff})$, whose set $X(Z_{\pfff})_{n,\Kim}^Z$ of common zeroes contains $X(Z)$. Concretely, the functions so produced are polynomials in $p$-adic polylogarithms of degree (or ``half-weight'') at most $n$.

\subsubsection{Computational Motivic Chabauty-Kim Theory}

Since then, there has been work to compute specific elements of $\Ic^Z_{n,\mathrm{Kim}}$. More specifically, Dan-Cohen and Wewers (\cite{CKone,MTMUE}) developed a motivic framework for making computations and computed the cases $Z=\Spec{\Zb}, \Spec{\Zb[1/2]}$, both in half-weights $n=2,4$. In \cite{MTMUEII}, Dan-Cohen formulated these methods into an algorithm and proved that this algorithm outputs the set of solutions, assuming variants on some well-known conjectures. Brown (\cite{BrownIntegral}) made further contributions and put the computations of \cite{MTMUE} in a new light.

One motivation for the computations of \cite{MTMUE} is that they verified specific cases of the following conjecture of Kim:
\begin{conj}[Conjecture 3.1 of \cite{nabsd}]\label{conj:kim1}
$X(Z) = X(Z_{\pfff})_{n,\Kim}^Z$ for sufficiently large $n$.
\end{conj}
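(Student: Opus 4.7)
The statement is Kim's conjecture itself, which is famously open in general; the paper's announced goal is only to verify a new specific instance of it by explicit computation, not to prove it unconditionally. Nonetheless, the natural plan of attack for any instance splits into two halves: a \emph{codimension} half, which produces nontrivial elements of $\Ic^Z_{n,\mathrm{Kim}}$ for large $n$, and a \emph{nonvanishing} half, which shows that their common zero locus on $X(Z_{\pfff})$ is no larger than $X(Z)$.

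For the codimension half I would work inside the Chabauty--Kim diagram, comparing a global unipotent Selmer scheme $H^1_f(G_{K,S}, U_n)$ to its local analogue at $\pfff$, the latter identified via the $p$-adic period map with a quotient of the de Rham fundamental group modulo $F^0$ into which $X(Z_{\pfff})$ embeds by the unipotent Kummer map. In the mixed Tate setting of interest here, both sides have explicitly computable dimensions: they are controlled by the graded pieces of $U_n$ together with the Galois cohomology of $\Spec \Oc_K[1/S]$, whose ranks are given by Borel's theorem on $K$-theory. A direct Euler characteristic count then shows the global dimension grows strictly more slowly than the local one as $n \to \infty$, which forces $\Ic^Z_{n,\mathrm{Kim}}$ to be eventually nonzero and its zero locus on $X(Z_{\pfff})$ to be at worst finite, already recovering Siegel.

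The nonvanishing half is the actual heart of the matter: a priori, $X(Z_{\pfff})_{n,\mathrm{Kim}}^Z$ could retain \emph{spurious} $p$-adic points that do not come from $X(Z)$, and no general argument is known to rule them out. The strategy adopted in this series is computational. I would restrict to the polylogarithmic quotient of the de Rham fundamental group, which forces all motivic iterated integrals into Goncharov's depth-one combinatorics, and then use the refined algorithm of this paper to produce explicit polynomial equations in $p$-adic polylogarithms. Given enough $p$-adic precision on the relevant Coleman values one checks directly, for the specific $Z$ and $n$ under study, that every zero on $X(Z_{\pfff})$ already lies in the finite set $X(Z)$, and combines this with the codimension estimate to conclude.

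The hard step will be this nonvanishing half: extracting enough $p$-adic information from the depth-one motivic iterated integrals to exclude spurious zeroes in the chosen example, and moreover accounting for the symmetry-breaking phenomenon flagged in the abstract, which forces one to symmetrize the polylogarithmic cut before it can even plausibly coincide with $X(Z)$ as stated.
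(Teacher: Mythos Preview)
The statement is a \emph{conjecture}, not a theorem; the paper offers no proof of it, and you correctly recognize this in your first sentence. There is therefore nothing to compare at the level of a proof: the paper quotes Kim's conjecture, formulates a strengthened polylogarithmic/$S_3$-symmetrized variant (Conjecture~\ref{conj:polylog_kim_symm}), and then verifies that variant in the single case $Z=\Spec\Zb[1/3]$, $p=5,7$, $n=4$ by explicit computation (Theorem~\ref{thm:verification_computation}).

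Your sketch of how such a verification proceeds is accurate and matches the paper's architecture. The ``codimension half'' is handled not by an abstract dimension count but by the \emph{geometric step} (Section~\ref{sec:geometric_step}): explicit elimination in coordinates on $\ZPLn\times\pi_1^\un(Z)$ produces concrete elements of $\Izn$. The ``nonvanishing half'' is exactly as you describe: numerically evaluate the resulting Coleman functions at the finitely many candidate points left over from the half-weight~$2$ locus. You also correctly flag the symmetrization issue; the paper proves (Theorem~\ref{thm:counterexample}) that $-1$ always lies in the raw polylogarithmic locus for $Z=\Spec\Zb[1/\ell]$, so the $S_3$-intersection is not a refinement but a necessity.
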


\subsection{The Current Work}

In this work, we present various simplifications to the methods of \cite{MTMUE} and \cite{MTMUEII}, inspired in part by \cite{BrownIntegral}, and use them to find an element of $\Ic^Z_{n,\mathrm{Kim}}$ for $Z=\Spec{\Zb[1/3]}$. More specifically, for every positive integer $n$, we define an ideal $\Ic^Z_{\PL,n}$ of elements of $\Oc(\Pi_{\PL,n} \times \pi_1^\un(Z))$ (defined in Section \ref{sec:PL}), whose elements specialize under a $p$-adic realization map to elements of $\Ic^Z_{n,\mathrm{Kim}}$. We then prove the following:
\begin{thm}[Theorem \ref{thm:computation_wt4}]
The element
\begin{align*}
\zetau(3) \logu(3) \Liu_4 - \left(\frac{18}{13} \Liu_4(3) - \frac{3}{52} \Liu_4(9)\right) \logu \Liu_3 \\
- \frac{(\logu)^3 \Liu_1}{24} \left(\zetau(3) \logu(3) - 4 \left(\frac{18}{13} \Liu_4(3) - \frac{3}{52} \Liu_4(9)\right)\right)
\end{align*}
 is in $\Ic^Z_{\PL,4}$ for $Z=\Spec{\Zb[1/3]}$.
\end{thm}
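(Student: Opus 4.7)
The plan is to unwind the definition of $\Ic^Z_{\PL,4}$ from Section~\ref{sec:PL} and verify the claim by an explicit computation in the coordinate ring $\Oc(\Pi_{\PL,4} \times \pi_1^{\un}(Z))$, exploiting the depth-one description of the polylog quotient that is the paper's central theme.

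First I would fix coordinates: on $\Pi_{\PL,4}$ the generators are $\logu$ and $\Liu_1,\ldots,\Liu_4$; on $\pi_1^{\un}(Z)$ for $Z = \Spec{\Zb[1/3]}$ the relevant motivic periods in low weight are $\logu(3)$ and $\zetau(3)$, together with the polylog periods $\Liu_k(3)$ and $\Liu_k(9)$ for $k \leq 4$ attached to paths ending at the $3$-unit points. The ideal $\Ic^Z_{\PL,4}$ cuts out the image of the motivic Selmer scheme, so after the depth-one reduction highlighted in the abstract, membership of an element $F$ becomes the vanishing of a finite collection of Brown--Goncharov infinitesimal coactions $D_{2r+1}(F)$.

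Next I would compute these coactions summand by summand on the stated expression, using the depth-one coaction formulas on $\Liu_n$, $\Liu_n(3)$, and $\Liu_n(9)$. The first and third terms are arranged so that the coaction contribution from $\zetau(3)\logu(3)\Liu_4$ (produced by the $\Liu_1\otimes(\logu)^3/6$ component of the coaction of $\Liu_4$) is exactly cancelled by the coaction applied to $(\logu)^3\Liu_1\cdot\zetau(3)\logu(3)/24$, the factor $4 = 24/6$ matching appropriately; the middle term and its $4C$ companion, with $C = \tfrac{18}{13}\Liu_4(3) - \tfrac{3}{52}\Liu_4(9)$, are paired analogously, the internal coefficient $4$ accounting for the ratio of coaction contributions at weight $3$ versus weight $4$.

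The hard part will be identifying the specific coefficients $\tfrac{18}{13}$ and $-\tfrac{3}{52}$. These are forced by the requirement that the combination $\alpha\Liu_4(3) + \beta\Liu_4(9)$ descend to the same class as $\tfrac14\zetau(3)\logu(3)$ in the weight-$4$ quotient of the depth-one motivic Hopf algebra of $Z$. Extracting this combination will use the motivic distribution relation between $\Liu_4(9)$ and $\Liu_4(3)$ together with Goncharov's description of the depth-one part, and is precisely the linear-algebra problem the refined Dan-Cohen algorithm is engineered to solve; once the coefficients are pinned down, the remaining coaction components cancel and $F$ is exhibited as a member of $\Ic^Z_{\PL,4}$.
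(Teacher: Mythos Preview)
Your proposal misidentifies the criterion for membership in $\Ic^Z_{\PL,4}$. By Definition~\ref{defn:CK_ideal}, an element $F \in \Oc(\Pi_{\PL,4} \times \pi_1^\un(Z))$ lies in $\Ic^Z_{\PL,4}$ precisely when $\evalu_{\PL,4}^\sharp(F)=0$, i.e.\ when $F$ vanishes on the image of the universal cocycle evaluation map. This is an elimination problem in the polynomial ring $\Qb[w_0,w_1,w_2]\otimes A(Z)_{\le 4}$, not a coaction condition. The infinitesimal coactions $D_{2r+1}$ act on motivic periods in $A(Z)$, not on elements of $\Oc(\Pi_{\PL,4})\otimes A(Z)$; the unevaluated symbols $\logu,\Liu_k$ are coordinates on $\Pi_{\PL,4}$ and carry no motivic coaction in this sense. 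So the sentence ``membership of an element $F$ becomes the vanishing of a finite collection of Brown--Goncharov infinitesimal coactions $D_{2r+1}(F)$'' has no content that I can see, and the cancellation story you tell in the third paragraph does not correspond to any actual mechanism.

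The paper's argument has two genuinely separate halves. First (Section~\ref{sec:geometric_step}) one writes $\evalu_{\PL,4}^\sharp$ explicitly in the coordinates $w_0,w_1,w_2$ on $\ZPLn$ and $f_\tau,f_\sigma,f_{\sigma\tau}$ on $\pi_1^\un(Z)_{\ge -4}$, then eliminates $w_0,w_1,w_2$ to obtain the \emph{abstract} ideal element
\[
f_\sigma f_\tau\,\Liu_4 - f_{\sigma\tau}\,\logu\,\Liu_3 - \tfrac{(\logu)^3\Liu_1}{24}\bigl(f_\sigma f_\tau - 4 f_{\sigma\tau}\bigr).
\]
The factor $4$ you noticed is produced by this elimination, not by a coaction ratio. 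Second (Section~\ref{sec:case_1/3}) one must write $f_{\sigma\tau}$ as an explicit motivic period. Your claim that $C=\tfrac{18}{13}\Liu_4(3)-\tfrac{3}{52}\Liu_4(9)$ should ``descend to the same class as $\tfrac14\zetau(3)\logu(3)$'' is false: in the shuffle basis $\zetau(3)\logu(3)=f_\sigma\Sha f_\tau=f_{\sigma\tau}+f_{\tau\sigma}$, whereas what is needed is exactly $C=f_{\sigma\tau}$. Establishing this requires enlarging to $Z'=\Spec\Zb[1/6]$ (since $X(\Zb[1/3])=\emptyset$), fixing $P_3(Z')$, computing $\Liu_4(3)$ and $\Liu_4(9)$ in the basis $\{f_{\sigma\tau},f_{\tau_2\tau\tau\tau},\dots\}$ via Proposition~\ref{prop:brown}, and then eliminating the $2$-ramified coordinate $f_{\tau_2\tau\tau\tau}$. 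The crucial arithmetic input is the identity $w_2(9)=-\tfrac{26}{3}$, equivalently $\Liu_3(9)-12\Liu_3(3)=-\tfrac{26}{3}\zetau(3)$, proven in the Appendix via the Kummer--Spence relation; no distribution relation between $\Liu_4(9)$ and $\Liu_4(3)$ suffices. None of these ingredients appear in your sketch.
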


We also let $X(Z_{\pfff})_{\PL,n}$ denote the subset of common zeroes of specializations of elements of $\Ic^Z_{\PL,n}$. By computing values of the corresponding Coleman function at elements of $X(Z_{\pfff})_{\PL,2}$ already found in \cite{nabsd} and using a symmetrization argument described below, we get:

\begin{thm}[Theorem \ref{thm:verification_computation}]\label{thm:verification_computation1}
Conjecture \ref{conj:kim1} holds for $Z = \Spec{\Zb[1/3]}$ and $p=5,7$ with $n=4$.
\end{thm}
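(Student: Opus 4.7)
The plan is to combine the weight-$4$ obstruction of Theorem~\ref{thm:computation_wt4} with the finite candidate set $X(Z_{\pfff})_{\PL,2}$ from \cite{nabsd}, and then to symmetrize in order to bridge the gap between the polylogarithmic locus and the full Chabauty-Kim locus. As a preliminary observation, $X(Z)$ is empty for $Z = \Spec{\Zb[1/3]}$: a solution would require $\pm 3^a \pm 3^b = 1$ for integers $a, b$, which is ruled out by reducing modulo $3$ and comparing sizes. The content of the theorem is therefore the vanishing $X(Z_{\pfff})_{4,\Kim}^Z = \emptyset$ for $p = 5, 7$.

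First, I would specialize the displayed motivic element to an explicit $p$-adic Coleman function $f_p$ via the $p$-adic realization map; by construction $f_p \in \Ic^Z_{4,\Kim}$. Using that the specializations of $\Ic^Z_{\PL,n}$ are contained in $\Ic^Z_{n,\Kim}$ (so Kim's zero locus refines the polylog one) and that the polylog loci shrink with $n$, one obtains the nested inclusions
$$X(Z_{\pfff})_{4,\Kim}^Z \;\subseteq\; X(Z_{\pfff})_{\PL,4} \;\subseteq\; X(Z_{\pfff})_{\PL,2},$$
reducing the verification to evaluating $f_p$ at the finitely many candidate points already listed in \cite{nabsd}.

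The next ingredient is symmetrization. Because the polylog quotient is not preserved by the $S_3$-action on $X = \Poneminusthreepoints$, a single $f_p$ typically only cuts out part of the spurious locus, even though $X(Z)$ is itself $S_3$-stable. Pulling $f_p$ back along each of the six automorphisms of $X$ produces additional Coleman functions, all vanishing on $X(Z)$; the symmetrization argument alluded to in the introduction would promote these pullbacks into honest obstructions for the Chabauty-Kim locus itself. A direct numerical computation would then exhibit, at each candidate point, at least one symmetrized pullback whose value is a nonzero element of $\Qb_p$, completing the proof.

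The main obstacle will be the numerical step. The Coleman function $f_p$ involves the constants $\zetau(3)$, $\logu(3)$, $\Liu_4(3)$ and $\Liu_4(9)$ together with $p$-adic polylogarithms at the candidate points, and one must certify sufficient precision at both $p = 5$ and $p = 7$ to distinguish a true zero from a merely small-looking $p$-adic value. A secondary subtlety is justifying that the symmetrized locus really contains $X(Z_{\pfff})_{4,\Kim}^Z$, which is what licenses the symmetrization step as a filter for Kim's conjecture itself.
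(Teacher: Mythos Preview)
Your proposal is correct and matches the paper's approach: reduce to the weight-$2$ candidate set $\{2,\tfrac12,-1\}$ from \cite{nabsd}, eliminate $2$ and $\tfrac12$ by numerically evaluating the weight-$4$ function of Theorem~\ref{thm:computation_wt4}, and then invoke the $S_3$-symmetrization (your ``secondary subtlety'' is exactly Proposition~\ref{prop:different_conjs}) to handle $-1$. The paper phrases the final step slightly more economically---once $X(Z_{\pff})_{\PL,4}\subseteq\{-1\}$ is known, it simply observes that $-1$ is not $S_3$-fixed, so $X(Z_{\pff})_{\PL,4}^{S_3}=\emptyset$---but since the $S_3$-orbit of $-1$ is precisely $\{-1,2,\tfrac12\}$, this is equivalent to your formulation via pullbacks and requires no numerics beyond what you already computed.
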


This example exhibits a new phenomenon: while the functions we obtain cut out the set of $\Zb[1/3]$-points, its coefficients are written in terms of periods that ramify at the prime $2$.

There are two differences between the definitions of $X(Z_{\pfff})_{n,\Kim}^Z$ and $X(Z_{\pfff})_{\PL,n}$. The first is a technical point, discussed in Remark \ref{rem:kimvsus}, that conjecturally has no effect. The second, a simplification already introduced in \cite{MTMUE} and \cite{MTMUEII}, is that we work only with polylogarithms rather than multiple polylogarithms, i.e. functions of the form $\Li_{n_1,\cdots,n_r}$ only for $r=1$ rather than arbitrary positive integers $r$. This entails a certain strengthening of Conjecture \ref{conj:kim1}. However, as the following theorem shows, polylogarithms in and of themselves are insufficient for this purpose; the precise formulation requires some care.

We always have $X(Z_{\pfff})_{n,\Kim}^Z \subseteq X(Z_{\pfff})_{\PL,n}$ (c.f. Remark \ref{rem:kimvsus}), and we prove:
\begin{thm}[Theorem \ref{thm:counterexample}]\label{thm:counterexample1}
For any prime $\ell$ and positive integer $n$, we have
\[
-1 \in X(Z_{\pfff})_{\PL,n},
\]
where $Z = \Spec{\Zb[1/\ell]}$.

\end{thm}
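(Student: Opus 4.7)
The plan is to exploit the inversion symmetry $\iota \colon z \mapsto z^{-1}$ of $X = \Poneminusthreepoints$, which fixes $z = -1$, is defined over $\mathbb{Z}$, and descends to the polylog quotient. On the defining forms one computes $\iota^*(dz/z) = -dz/z$ and $\iota^*(dz/(z-1)) = dz/(z-1) - dz/z$, so the induced action on $\Oc(\Pi_{\PL,n})$ sends $\logu \mapsto -\logu$ and each $\Liu_k \mapsto (-1)^k \Liu_k$ modulo an explicit polynomial in $\logu$ coming from the classical polylogarithm inversion identity.

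Next I would verify that $\Ic^Z_{\PL,n}$ is stable under $\iota^*$. The ideal is cut out by the image of the global Selmer variety $H^1_f(G_Z, \Pi_{\PL,n})$ under the localization map to $\Pi_{\PL,n}(\Qb_p)$, and since $\iota$ is defined over $\mathbb{Z} \subset Z$, the whole construction is $\iota$-equivariant. Consequently the common vanishing locus $X(Z_\pfff)_{\PL,n}$ is $\iota$-stable as a subset of $X(Z_\pfff)$.

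Finally, I would combine $\iota^*$-stability with the $p$-adic identity $\log_p(-1) = 0$, which holds because $-1$ is a root of unity. Because $-1$ is an $\iota$-fixed point of $X(Z_\pfff)$ and $\logu$ specializes to $0$ there, the $\iota^*$-invariance of $\Ic^Z_{\PL,n}$ together with the sign transformations of step one forces the value at $-1$ of any element of the ideal to equal its own negative, hence to vanish.

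The main obstacle lies in the last step: one must verify that the $\logu$-polynomial corrections appearing in $\iota^*(\Liu_k)$ really do drop out under the specialization $\logu \mapsto 0$, so that the sign argument applies to every generator of the ideal uniformly in $k$ and in $\ell$. A secondary technical point, flagged by the authors' own remarks on the symmetry-breaking nature of the polylog quotient, is to confirm that $\iota^*$ genuinely preserves the quotient $\Pi_{\PL,n}$ (as opposed to only the full de Rham fundamental group) with the transformation rules stated in step one; this is what keeps the $\iota$-equivariance of step two honest.
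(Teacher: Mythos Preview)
Your step 3 contains a genuine gap. Even granting steps 1 and 2, the conclusion ``$f(-1) = -f(-1)$'' does not follow. The identity $(\iota^* f)(-1) = f(\iota(-1)) = f(-1)$ is a tautology and carries no information, while the $\iota^*$-stability of $\Ic^Z_{\PL,n}$ only says the vanishing locus is $\iota$-stable --- being an $\iota$-fixed point of $X(Z_\pfff)$ in no way forces membership in an $\iota$-stable subset. Your sign argument would require that $\iota^*$ act by a \emph{uniform} sign on every monomial of $f$ (modulo $\logu$), but this is false: with the correct inversion formula one has $\iota^*(\Liu_k) \equiv (-1)^{k+1}\Liu_k \pmod{\logu}$ (not $(-1)^k$), so a monomial $\prod_j \Liu_{k_j}$ of total half-weight $d$ picks up the sign $(-1)^{d+r}$, where $r$ is the number of factors. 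For instance, in half-weight $4$ the monomials $\Liu_4$ and $\Liu_1\Liu_3$ acquire opposite signs. Indeed, after setting $\logu \mapsto 0$ and using $\Lip_k(-1)=0$ for $k$ even, only odd $\Liu_k$'s survive at $-1$, and those are all $\iota$-\emph{fixed}; so the identity collapses to $f(-1)=f(-1)$. Step 2 is also not immediate: $\iota$ moves the basepoint $\vec{1}_0$ to a tangent vector at $\infty$, so the relevant equivariance requires a basepoint-change argument that you have not supplied.

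The paper's proof is entirely different and does not use the $\iota$-symmetry at all. It produces an explicit $\Kc$-point $c_{-1}$ of the cocycle space $\ZPLn$ (namely $w_0=0$, $w_1 = \Liu_1(-1)/\logu(\ell)$, and $w_i = \Liu_{2i-1}(-1)/\zetau(2i-1)$ for $i\ge 2$) and checks, using the closed formula for $\evalu^{\#}_{\PL,n}(\Liu_k)$ together with $\logp(-1)=0$ and $\Lip_k(-1)=0$ for $k$ even, that the image $\alpha_{-1}=\evalu_{\PL,n}(c_{-1})$ lies in $\Pi_{\PL,n}(A(Z))$ and satisfies $\mathrm{per}_\pfff(\alpha_{-1}) = \kappa_\pfff(-1)$. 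Every element of $\Ic^Z_{\PL,n}$ then vanishes at $\alpha_{-1}$ by definition, hence at $\kappa_\pfff(-1)$.
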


As $X(\Zb[1/\ell])=\emptyset$ for $\ell$ odd, Theorem \ref{thm:counterexample1} disproves a previous hope by the authors and others that $X(Z) = X(Z_{\pfff})_{\PL,n}$ for sufficiently large $n$. This seemed like a reasonable generalization of Conjecture \ref{conj:kim1}, since not only is $\Ic^Z_{n,\mathrm{Kim}}$ nonzero for sufficiently large $n$, but $\Izn$ is as well, and hence $X(Z_{\pfff})_{\PL,n}$ is finite for such $n$ (more strongly, the rank of $\Izn$ goes to $\infty$ as $n \to \infty$).
However, there is an action of $S_3$ on the scheme $X$, and we may use it to correct the problem posed by Theorem \ref{thm:counterexample1} and refine our polylogarithmic version of Kim's conjecture accordingly. We write
$$
X(Z_{\pfff})_{\PL,n}^{S_3} \colonequals \bigcap_{\sigma \in S_3} \sigma(X(Z_{\pfff})_{\PL,n}).$$

Our strengthened Kim's conjecture is the following:
\begin{conj}[Conjecture \ref{conj:polylog_kim_symm}]\label{conj:polylog_kim_symm1}
$X(Z) = X(Z_{\pfff})_{\PL,n}^{S_3}$ for sufficiently large $n$.
\end{conj}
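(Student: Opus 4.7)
The first containment, $X(Z) \subseteq X(Z_{\pfff})_{\PL,n}^{S_3}$, is the easy direction and holds unconditionally. The $S_3$-action on $X$ is defined over $\Spec\Zb$, so each $\sigma$ restricts to a bijection $X(Z) \to X(Z)$. The inclusion $X(Z) \subseteq X(Z_{\pfff})_{\PL,n}$ holds by definition of the polylog vanishing locus, and applying $\sigma$ together with $\sigma(X(Z)) = X(Z)$ gives $X(Z) \subseteq \sigma(X(Z_{\pfff})_{\PL,n})$ for each $\sigma$; intersecting over $S_3$ yields the claim.

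The substantive direction is the reverse inclusion, and the plan is to deduce it from Conjecture \ref{conj:kim1} by establishing, for some nondecreasing $n \mapsto n'(n)$ with $n'(n) \to \infty$, a containment
\[
X(Z_{\pfff})_{\PL,n'(n)}^{S_3} \subseteq X(Z_{\pfff})_{n,\Kim}^Z.
\]
Combined with Conjecture \ref{conj:kim1}, this would yield $X(Z_{\pfff})_{\PL,n'(n)}^{S_3} \subseteq X(Z)$ for $n$ sufficiently large. Such a reduction would be consistent with Theorem \ref{thm:counterexample1}: the spurious zero $-1 \in X(Z_{\pfff})_{\PL,n}$ should be killed after $S_3$-symmetrization because its orbit $\{-1, 2, 1/2\}$ under the $S_3$-action on $X$ is not expected to lie entirely in any single polylog locus, even when $Z = \Spec\Zb[1/\ell]$.

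To prove the reduction, the idea is to realize every generator of Kim's ideal $\Ic^Z_{n,\Kim}$, modulo products and lower-weight contributions, as a polynomial in $S_3$-pullbacks of polylog-quotient generators of possibly higher weight. The polylog quotient captures iterated integrals of Goncharov depth one relative to one specific pair of tangential-basepoint differentials; the $S_3$-action permutes the three analogous pairs attached to the punctures $0$, $1$, and $\infty$. The technical statement to aim for, most naturally formulated at the level of the motivic Lie coalgebra of the thrice-punctured line over $Z$, is that the six depth-one subspaces, together with their products, jointly span the Kim-relevant part of the coalgebra in sufficiently high weight, modulo elements that already vanish on $X(Z_{\pfff})$ by virtue of a single polylog ideal.

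The main obstacle is precisely this Lie-coalgebraic comparison: the polylog quotient is a very small subquotient of the fundamental group, and it is far from obvious that its six $S_3$-translates generate, even after products, the higher-depth part of the mixed Tate motivic fundamental group that Kim's full method accesses. A realistic intermediate target is therefore to verify the analogue of the conjecture at a fixed low weight (say $n=4$ or $n=6$) for specific $Z$, extending the explicit computations behind Theorem \ref{thm:verification_computation1}, and to match a dimension count of the Kim selmer scheme against the polylog-and-$S_3$ contributions; success at low weight would provide both evidence for and a concrete blueprint toward the full statement.
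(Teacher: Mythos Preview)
The statement you are addressing is a \emph{conjecture}, not a theorem: the paper does not prove it in general. It is stated as Conjecture~\ref{conj:polylog_kim_symm} (restated as Conjecture~\ref{conj:polylog_kim_symm1} in the introduction), and the only unconditional content the paper supplies is (i) the easy containment $X(Z) \subseteq X(Z_{\pfff})_{\PL,n}^{S_3}$, which you handle correctly in your first paragraph, and (ii) a verification in the single case $Z = \Spec\Zb[1/3]$, $p=5,7$, $n=4$ (Theorem~\ref{thm:verification_computation}). So there is no ``paper's own proof'' to compare against; what you have written is a research outline, not a proof, and you should recognize it as such.

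There is also a directional issue in your reduction strategy. You propose to derive the reverse inclusion from Conjecture~\ref{conj:kim1} via a containment $X(Z_{\pfff})_{\PL,n'(n)}^{S_3} \subseteq X(Z_{\pfff})_{n,\Kim}^Z$. But the only containment the paper actually establishes goes the \emph{other way}: Proposition~\ref{prop:different_conjs} shows $X(Z_{\pfff})_n \subseteq X(Z_{\pfff})_{\PL,n}^{S_3}$, whence Conjecture~\ref{conj:polylog_kim_symm} is \emph{stronger} than Conjecture~\ref{conj:kim} (and hence than Conjecture~\ref{conj:kim1}). Your proposed containment is therefore not a consequence of anything in the paper; it is an independent and genuinely difficult statement, essentially asserting that the six $S_3$-translates of the depth-one quotient generate enough of the full Chabauty--Kim ideal. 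You correctly flag this as ``the main obstacle,'' but that means the proposal as written assumes the crux rather than proving it. At present there is no known mechanism, even conjecturally beyond Goncharov-type depth conjectures, that would supply this spanning statement, so the reduction remains speculative.
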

As explained in Section \ref{sec:conjectures}, Conjecture \ref{conj:polylog_kim_symm1} implies Conjecture \ref{conj:kim1}. It is in fact this strengthened conjecture that we verify in Theorem \ref{thm:verification_computation1}.


\subsection{Background on Mixed Tate Motives and the Unit Equation}

In this part of the introduction, we assume familiarity with the basics of Kim's program (\cite{kim05},\cite{kim09})\footnote{See \cite{KimNonLinear}, \cite{KimFGDG}, \cite{KimGTDG}, and \cite{ChabautytoKim} for introductions to Kim's program.} and explain both the application of mixed Tate motives to the $S$-unit equation (as developed in \cite{MTMUE}) and the novelty in our methods. Our setup is then repeated in full technical detail in Section \ref{sec:technical_preliminaries}.

\subsubsection{From Kim's Method to Mixed Tate Motives}

For a place $\pf$ of $K$ over $p$ and a smooth curve $X$ over $Z$, Kim constructs a commutative diagram (\cite{kim05},\cite{kim09})
\[ 
\xymatrix{
X(Z) \ar@{^{(}->}[r]
\ar[d]_-{\kappa}
&
X(Z_{\pf})
\ar[d]^-{\kappa_{\pf}}
\\
\mathrm{Sel}(X/Z)_n \ar[r]_-{\mathrm{loc}_n}
&
\mathrm{Sel}(X/Z_{\pf})_n \, ,
}
\]
which we refer to as \emph{Kim's cutter}, and proves for $X=\Poneminusthreepoints$ that the morphism of schemes $\mathrm{loc}_n$ is non-dominant for sufficiently large $n$. The ideal $$\Ic^Z_{n,\mathrm{Kim}}$$ is then defined as the set of pullbacks under $\kappa_{\pf}$ of functions on $\mathrm{Sel}(X/Z_{\pf})_n$ vanishing on the image of $\mathrm{loc}_n$.

In order to compute such functions concretely, one must understand $\mathrm{Sel}(X/Z)_n$ as well as the morphism $\mathrm{loc}_n$. Let $U_n$ denote the $n$th quotient of the pro-unipotent completion of the \'etale fundamental group of $X_{\overline{K}}$ along the descending central series as in \cite{nabsd}. The Selmer variety is defined so that its set of $\Qp$-points is the set $$H^1_f(G_K;U_n)$$ of cohomology classes of $G_K$ with coefficients in $U_n$ that are unramified at closed points of $Z$ and crystalline at primes over $p$. Both the group $G_K$ and the local conditions are hard to understand explicitly.

An important observation is that one needs to understand only the \emph{category} of continuous $p$-adic representations of $G_K$ that appear in $U_n$ and its torsors. More specifically, $U_n$ and its torsors are pro-varieties over $\Qp$, and their coordinates rings are (ind-)objects of a certain subcategory of the category of all continuous $p$-adic representations of $G_K$.

This subcategory is the category of mixed Tate $p$-adic representations of $\Qp$ unramified at closed points of $Z$ and crystalline at places above $p$; being ``mixed Tate'' means that its semi-simplification is a direct sum of tensor powers $\Qp(n)$ for $n \in \Zb$ of the $p$-adic cyclotomic character. The subcategory of semisimple objects is therefore equivalent to the category of representations of $\Gm$, so the full category is equivalent by the Tannakian formalism to the category of representations of a group $\pi_1^{\MT}(Z)$ isomorphic to an extension of $\Gm$ by a pro-unipotent group. The pro-unipotent group may be determined by computing the Bloch-Kato Selmer groups $H^1_f(G_K,\Qp(n))$ for each $n$, and these are known (\cite{Soule79}). In this way, the Selmer variety becomes simply the group cohomology of $\pi_1^{\MT}(Z)$, with no further local conditions (other than those encoded in the category itself).

In fact, the category of mixed Tate Galois representations with local conditions mentioned above is just the extension of scalars from $\Qb$ to $\Qp$ of the category $$\MT(Z, \QQ)$$ of mixed Tate motives over $Z$ with coefficients in $\Qb$. This latter category was defined in \cite{DelGon05}, and its Tannakian fundamental group is denoted $\pi_1^{\MT}(Z)$. The unipotent de Rham fundamental group $\pi_1^\un(X)$ is the Tannakian fundamental group of the category of vector bundles with unipotent integrable connection, and the theory of \cite{DelGon05} (or the later theories of \cite{LevineTMFG} and \cite{DCSchlank17}) gives it an action of $\pi_1^{\MT}(Z)$. This motivic Selmer variety, as developed in \cite{HadianDuke} and \cite{MTMUE}, is just the group cohomology of $\pi_1^{\MT}(Z)$, with coefficients in (a quotient depending on $n$ of) $\pi_1^\un(X)$.


More specifically, let $\Pi$ denote a $\pi_1^{\MT}(Z)$-equivariant quotient of $\pi_1^\un(X)$. Then there is a motivic version of Kim's cutter:
\[ 
\xymatrix{
X(Z) \ar@{^{(}->}[r]
\ar[d]_-{\kappa}
&
X(Z_{\pf})
\ar[d]^-{\kappa_{\pf}}
\\
H^1(\pi_1^{\MT}(Z),\Pi) \ar[r]_-{\mathrm{loc}_\Pi}
&
\Pi(\Qp)
}.
\]
The computability of this diagram as opposed to Kim's original diagram comes from our precise understanding of the abstract structure of the group $\pi_1^{\MT}(Z)$ and Goncharov's study of certain special functions on it.

\subsubsection{Structure of the Category of Mixed Tate Motives}

More specifically, every object $M$ of the category $\MT(Z,\QQ)$ has an ascending filtration $W$ such that for all integers $n$,
\[
\mathrm{Gr}^W_{2n-1}M = 0,
\]
and
\[
\mathrm{Gr}^W_{2n}M
\]
is a direct sum of copies of $\Qb(-n)=\Qb(1)^{\otimes -n}$, where $\Qb(1)$ is known as the \emph{Tate motive} and has Galois realization the cyclotomic character. We will refer to $n$ as the \emph{half-weight}.

This implies that the subcategory of semisimple objects of $\MT(Z,\QQ)$ is equivalent to the category of representations of $\Gm$, so $\pi_1^{\MT}$ is the extension of $\Gm$ by a pro-unipotent group, which we call $\pi_1^{\un}(Z)$. By the Tannakian formalism, one may describe $\pi_1^{\un}(Z)$ in terms of the groups $\mathrm{Ext}^i(\QQ,\QQ(n))$. In fact, one has
\[
    \Ext^i(\QQ,\QQ(n)) = K_{2n-i}^{(n)}(Z) =  \left\{\begin{array}{lr}
        K_{2n-1}(Z)_{\Qb}, & \text{for } i=1,\, n \ge 0\\
        0, & \text{for } n<0 \\
        0, & \text{for } i>1
        \end{array}\right\}.
\]

The groups $K_{2n-1}(Z)_{\Qb}$ are known by the work of Borel (\cite{Borel74}). From now on, we restrict to the case $Z=\Zb[1/S]$. Then $K_1(Z)_{\Qb}$ has dimension $S$, and for $n \ge 2$, $K_{2n-1}(Z)_\Qb = K_{2n-1}(K)_\Qb$ has dimension $0$ for $n$ even and $1$ for $n$ odd. By the formalism of pro-unipotent groups, this implies that the Lie algebra $\nf(Z)$ of $\pi_1^{\un}(Z)$ is a free graded Lie algebra with generators we denote $\{\tau_{\ell}\}_{\ell \in S}$ and $\{\sigma_{2n+1}\}_{n \ge 1}$ in degrees $-1$ and $-2n-1$, respectively (the degree determines the action of $\Gm$). The coordinate ring of $\pi_1^{\un}(Z)$, which we denote by $A(Z)$, is then a free graded vector space on elements we denote $f_w$, for $w$ a word in the generators of $\nf(Z)$. The product and coproduct on $A(Z)$ are the shuffle product and deconcatenation coproduct, respectively, reviewed in Section \ref{sec:free_prounip}.

This means that we have a description of $\pi_1^{\MT}(Z)$ as an abstract (pro-algebraic) group, which allows us to determine the structure of $H^1(\pi_1^{\MT}(Z),\Pi)$, at least when we understand the action of $\pi_1^{\MT}(Z)$ on $\Pi$. However, we still need a concrete description that allows us to compute $\kappa$, $\mathrm{loc}_{\Pi}$, and $\kappa_{p}$ in coordinates.

\subsubsection{Motivic Periods}

It turns out that we may write concrete elements of $A(Z)$ as \emph{motivic polylogarithms} of the form $\mathrm{Li}_n^{\mathfrak{u}}(z)$ for $n \in \Zb_{\ge 1}$ and $z \in \Qb$, as well as \emph{motivic logarithms} $\logu(z)$ and \emph{motivic zeta values} $\zetau(n)$ for $n \in \Zb_{\ge 1}$ (Section \ref{sec:gen_az}). There is also a ring homomorphism
\[\mathrm{per}_{p} \colon A(Z) \to \Qp\]
sending $\Liu_n(z)$ to the Coleman (\cite{Coleman}) $p$-adic polylogarithm $\Lip_n(z)$, and this is expected to be injective (c.f. Conjecture \ref{conj:period_conj}). There is also an explicit formula for the (reduced) coproduct of these elements in the Hopf algebra $A(Z)$, due to Goncharov (\cite{GonGal}):
\[
\Delta'\Liu_n(z) \colonequals \Delta \Liu_n(z) - 1 \otimes \Liu_n(z) - \Liu_n(z) \otimes 1 =
 \sum_{i=1}^{n-1}  \Liu_{n-i}(z) \otimes \frac{(\logu(z))^{\Sha i}}{i!}.
\]

To explicitly understand the maps in (motivic) Kim's cutter, we must say a bit more about $H^1(\pi_1^{\MT}(Z),\Pi)$ and $\Pi$. To understand $H^1(\pi_1^{\MT}(Z),\Pi)$, we define for any $\Qb$-algebra $R$:
\[
\ZPi(R) := Z^1(\pi_1^\un(Z)_R, \Pi_R)^{\Gm},
\]
the set of $\Gm$-equivariant algebraic cocycles from $\pi_1^\un(Z)_R$ to $\Pi_R$. By \cite[Proposition 5.2.1]{MTMUE}, we have
\[
H^1(\pi_1^{\MT}(Z), \Pi) = \ZPi(\Qb).
\]

For $\Pi$, we focus on finite-dimensional quotients of the \emph{polylogarithmic fundamental group of $X$}, defined as follows. The group $\pi_1^{\un}(X)$ is free pro-unipotent in two generators $e_0$ and $e_1$. Hence, $\Oc(\pi_1^{\un}(X))$ has a canonical shuffle basis parametrized by words $w$ in $e_0$ and $e_1$; we denote its elements by $\Liu_w$. We set $\logu = \Liu_{e_0}$ and $\Liu_m = \Liu_{e_1 e_0 \cdots e_0}$, and $\Pi_{\PL,n} = \pi_1^{\PL}(X)_{\ge -n}$ is the quotient corresponding to the subalgebra generated by $\logu,\Liu_1,\cdots,\Liu_n$. 

With this terminology, it is easy to express $\kappa$, $\mathrm{loc}_{\Pi}$, and $\kappa_{p}$ explicitly (c.f. Section \ref{sec:kappa_in_coords}). For $z \in X(Z)$, $\kappa(z)$ is the cocycle whose induced map on coordinate rings is
\begin{align*}
\Liu_n \mapsto \Liu_n(z)\\
\logu \mapsto \logu(z).
\end{align*}
For a cocycle $c \in \ZPi(\Qb)$, we let $c^\sharp \colon \Oc(\pi_1^{\PL}(X)_{\ge -n}) \to A(Z)$ denote the associated homomorphism of algebras. Then $\mathrm{loc}_{\pi_1^{\PL}(X)_{\ge -n}}(c) = \mathrm{loc}_{\PL,n}(c) \in \pi_1^{\PL}(X)_{\ge -n}(\Qp)$ is given by
\[
\mathrm{per}_{p} \circ c^\sharp.
\]
Finally, for $z \in X(Z_{p})$, $\kappa_{p}(z) \in \pi_1^{\PL}(X)_{\ge -n}(\Qp)$ is given by
\begin{align*}
\Liu_n \mapsto \Lip_n(z)\\
\logu \mapsto \logp(z).
\end{align*}

\subsubsection{The Geometric Step}\label{sec:the_geometric_step}

One important point, already present in \cite{MTMUEII} and \cite{BrownIntegral}, is that $\mathrm{loc}_n$, despite its apparent $p$-adic origin, is actually defined over the rationals. More specifically, we have a \emph{universal cocycle evaluation map} (c.f. Definition \ref{defn:universal_cocycle}): $$\evalu_\Pi \colon \ZPi \times \pi_1^\un(Z) \to \Pi \times \pi_1^\un(Z).$$
Letting $\Kc$ denote the function field of $\pi_1^\un(Z)$, we also have the base change from $\pi_1^\un(Z)$ to $\Spec{\Kc}$
$$
\evalu_\Pi^{\Kc} \colon (\ZPi)_{\Kc} \to \Pi_{\Kc}.
$$
We then let $\IzPi$ denote the ideal of functions in the coordinate ring of $\Pi_{\Kc}$ vanishing on the image of 
$\evalu_\Pi^{\Kc}$. This is known as the \emph{Chabauty-Kim ideal} (associated to $\Pi$). For $\Pi=\Pi_{\PL,n}$, we denote it by $\Izn$.

If $\Pi$ is finite dimensional, it will turn out that $\evalu_\Pi^{\Kc}$ is a morphism of affine spaces over the field $\Kc$, and our ``geometric step'' (c.f. Section \ref{sec:geometric_step}) consists in computing its scheme-theoretic image. The convenience of this is that it can be done in an abstract basis $\{f_w\}$ of $A(Z)$ and depends only on the size of $S$. One particularly useful tool in the geometric step is a description of a coordinate system on $\ZPLn$ (Proposition \ref{prop:brown}) based on an idea communicated to the authors by Francis Brown. This enables us to circumvent the computation of the exponential map, as in 1.10 and 4.2.4 of \cite{MTMUEII}.

The results of the geometric step look like:
\[\Liu_2 - \frac{1}{2} \logu \Liu_1\]
\[f_{\sigma_3} f_{\tau_{\ell}} \Liu_4 - f_{{\sigma_3} {\tau_{\ell}}} \logu \Liu_3 - \frac{(\logu)^3 \Liu_1}{24} \left(f_{\sigma_3} f_{\tau_{\ell}} - 4 f_{{\sigma_3} {\tau_{\ell}}}\right),\]
as elements of $\Ic^Z_{\PL,4}$ for $Z = \Spec{\Zb[1/\ell]}$.

\subsubsection{Computing Bases of $A(Z)$ in Low Degrees}

To find the Coleman functions associated to elements of $\Ic^Z_{\PL,n}$, we must apply $\mathrm{per}_{p}$ to the coefficients. To do this for abstract elements (e.g., $f_{\sigma_3 \tau_{\ell}}$), we must write elements $f_w \in A(Z)$ in terms of concrete motivic periods of the form $\Liu_n(z)$, $\zetau(n)$, or $\logu(z)$. This is done for $Z=\Spec{\Zb[1/2]}$ and $Z=\Spec{\Zb[1/3]}$ up to degree $4$ in Section \ref{sec:galois_coord_ex}, and we now summarize how we do it.


We first shrink $Z$ if necessary to a smaller open subscheme $Z'$, so that $X(Z')$ has enough elements to generate $A(Z')$ in the degrees we want. For $Z=\Spec{\Zb[1/2]}$, this turns out to be unnecessary, but for $Z=\Spec{\Zb[1/3]}$, we must shrink to $Z' = \Spec{\Zb[1/6]}$ as in done in \ref{sec:choosing_p_3_1/6}.

We then study $A_n(Z')$ inductively in $n$. For $A_1(Z')$, this is easy, since $\logu(\ell)$ corresponds to $f_{\tau_\ell}$. In higher degrees, we use the reduced coproduct, which we may compute using Goncharov's formula, to reduce to the case of lower degrees. Reduced coproducts, however, can give relations between $\Liu_n(z)$'s only modulo the kernel of $\Delta'$, which by Proposition \ref{prop:exact_sequence} is generated as a rational vector space in degree $2n+1$ by $\zetau(2n+1)=f_{\sigma_{2n+1}}$ and is zero in even degree. The only general method we know for determining the rational multiple of $\zetau(2n+1)$ is to apply $\mathrm{per}_{p}$ for some prime $p$ and approximate the rational number numerically using \cite{Lip}.


The Coleman functions we want are polynomials in $p$-adic polylogarithms whose coefficients are themselves polynomials with rational coefficients in special values of $p$-adic polylogarithms. In general, this method gives us only $p$-adic approximations of these rational coefficients. Nevertheless, approximations still allow us to $p$-adically approximate the roots and thereby verify cases of Kim's conjecture. Furthermore, in specific instances, one may use functional equations to verify the desired identities between $p$-adic polylogarithms, as is done in the Appendix to \cite{MTMUE} and the Appendix (Section \ref{sec:appendix}) to this paper.

We note that these computations are similar to those done in the various articles of Zagier (\cite{Zagier91}, \cite{ZagierAppendix}, \cite{GanglZagier}, etc.), and in one case (c.f. Remark \ref{rem:ident_already}), one of our identities already appears in a paper of Gangl and Zagier. One important difference, however, is that our computations take place in the Hopf algebra $A(Z)$, rather than in the Bloch groups as in Zagier's works. In particular, we must worry about products of lower-degree polylogarithms.




\subsection{Notation}

For a scheme $Y$, we let $\Oc(Y)$ denote its coordinate ring. If $R$ is a ring, we let $Y \otimes R$ or $Y_R$ denote the product (or `base-change') $Y \times \Spec{R}$. If $Y$ and $\Spec{R}$ are over an implicit base scheme $S$ (often $\Spec{\Qb}$), we take the product over $S$. Similarly, if $M$ is a linear object (such as a module, an algebra, a Lie algebra, or a Hopf algebra), then $M_R$ denotes $M \otimes R$ (again, with the tensor product taken over an implicit base ring).

If $f \colon Y \to Z$ is a morphism of schemes, we denote by $f^{\#} \colon \Oc(Z) \to \Oc(Y)$ the corresponding homomorphism of rings. Similarly, if $\alpha \in Y(R)$, we have a homomorphism $\alpha^\# \colon \Oc(Y) \to R$.

\subsection{Acknowledgements}\label{sec:acknowledgements}

The first author would like to thank his thesis advisor Bjorn Poonen for numerous comments and corrections on this paper. He would like to thank Amnon Besser for various pieces of advice, especially for a trick for going between complex and $p$-adic identities in Proposition \ref{prop:p_adic_ident}. He would like to thank Rob de Jeu, for useful discussion, eventually leading him to the paper \cite{Zagier91}, as well as Don Zagier, for subsequent discussions. He would like to thank Herbert Gangl for the idea of proving Lemma \ref{lemma:complex_ident} by substituting $x=-1$ and $y=1/3$ into the Kummer-Spence identity.

The first author was supported by NSF grants DMS-1069236 and DMS-160194, Simons Foundation grant \#402472, and NSF RTG Grant \#1646385 during various parts of the writing of this paper.

Both authors would like to thank Francis Brown for discussions about the subject and especially for the idea behind Proposition \ref{prop:brown}.

\section{Technical Preliminaries}\label{sec:technical_preliminaries}

This paper builds on the work of \cite{MTMUE}, \cite{MTMUEII}, and \cite{BrownIntegral}. We recall some of the important objects in the theory.

\subsection{Generalities on Graded Pro-Unipotent Groups}\label{sec:prounip}

\subsubsection{Conventions for Graded Vector Spaces}

Our definition of graded vector space is the following:

\begin{defn}\label{defn:graded_vector_space}
A graded vector space is a collection of vector spaces $V_i$ indexed by $i \in \Zb$.
\end{defn}

\begin{defn}A graded vector space is positive (respectively negative, strictly positive, strictly negative) if $V_i = 0$ for $i<0$ (respectively, for $i>0$, for $i \le 0$, for $i \ge 0$).\end{defn}

In general, we will consider only graded vector spaces satisfying one of these four conditions.
Furthermore, \emph{unless otherwise stated, we will consider only graded vector spaces $\{V_i\}$ such that each $V_i$ is finite-dimensional,} which ensures that the double dual is the identity. One must then be careful when taking tensor constructions to ensure that the result of the construction still satisfies this finite-dimensionality condition, as follows. Specifically, we consider the tensor product only between two graded vector spaces if they are either both positive or both negative, and we consider the tensor algebra only of a strictly positive or strictly negative graded vector space.

\begin{rem}\label{rem:graded}
Traditionally, a graded vector space is thought of as an underlying vector space with extra structure, rather than a collection of vector spaces as in Definition \ref{defn:graded_vector_space}. For a collection $$V = \{V_i\}_{i \in \Zb}$$ of (finite-dimensional) vector spaces indexed by the integers, we may take either the direct sum $$V^{\bigoplus} \colonequals \bigoplus_{i} V_i$$ or the direct product $$V^{\prod} \colonequals \prod_i V_i$$ as our underlying vector space. In general, we use the former for coordinate rings and Lie coalgebras and the latter for universal enveloping algebras and Lie algebras. As all of our pro-unipotent groups will be negatively graded, we use the $\bigoplus$ notion for positively graded vector spaces and the $\prod$ notion for negatively graded vector spaces.

When considering the $\prod$ notion, we take completed tensor product instead of tensor product (and similarly for tensor algebras and universal enveloping algebras), and a coproduct is a complete coproduct, i.e., a homomorphism $V \to V \widehat{\otimes} V$. In addition, a set of homogeneous elements is considered a basis if it generates $V$ in each degree, and a similar remark applies to bases of algebras and Lie algebras. When taking the dual of a negatively graded vector space, we take the graded dual and then view the resulting (positively) graded vector space via the $\bigoplus$ notion. In particular, the double dual is always the original vector space. Nonetheless, we have the following relation between graded and ordinary (non-graded) duals: $$(V^{\bigoplus})^{\vee} = (V^{\vee})^{\prod}.$$

\end{rem}

\subsubsection{Graded Pro-unipotent Groups}

Let $U$ be a pro-unipotent group over $\Qb$.
Then $U$ is a group scheme over $\Qb$, so its coordinate ring $\Oc(U)$ is a Hopf algebra over $\Qb$. We recall that if $\Oc(U)$ is a Hopf algebra over $\Qb$, then it is equipped with a product $\Oc(U) \otimes \Oc(U) \to \Oc(U)$, a coproduct $\Delta \colon \Oc(U) \to \Oc(U) \otimes \Oc(U)$, a unit $\eta \colon \Qb \to \Oc(U)$, and a counit $\epsilon \colon \Oc(U) \to \Qb$. The kernel $I(U)$ of $\epsilon$ is known as the \emph{augmentation ideal}. We also write $\Delta'(x) \colonequals \Delta(x) - x\otimes 1 - 1 \otimes x$ for the \emph{reduced coproduct}. Finally, we say that an element is \emph{primitive}\footnote{Note that this is unrelated to the term ``primitive non-extension'' in \cite[1.6]{MTMUEII}.} if it is in the kernel of $\Delta'$.

We say that a Hopf algebra $A$ is a \emph{graded Hopf algebra} if the multiplication $A \otimes A \to A$, the coproduct $A \to A \otimes A$, unit $\Qb \to A$, and counit $A \to \Qb$, are morphisms of graded vector spaces, where $A \otimes A$ has the standard grading on a tensor product, and $\Qb$ is in degree zero.

\begin{defn}\label{defn:graded_hopf}
By a \emph{grading} on $U$, we mean a positive grading on $\Oc(U)$ as a $\Qb$-vector space such that the degree zero part of $\Oc(U)$ is one-dimensional over $\Qb$.
\end{defn}

\begin{defn}\label{defn:delta_n}
If $A$ is a Hopf algebra graded in the sense of Definition \ref{defn:graded_hopf}, we let $\Delta_n$ and $\Delta'_n$ denote the restrictions of $\Delta$ and $\Delta'$, respectively, to $A_n$, the $n$th graded piece.
\end{defn}

Furthermore, $\Delta_n$ and $\Delta'_n$ map $A_n$ into the $n$th graded piece of $A \otimes A$, which is
\[
\bigoplus_{i+j=n} A_i \otimes A_j.
\]

\begin{defn}\label{defn:delta_ij}
For $i+j=n$ and $i,j \ge 0$, we let $\Delta_{i,j}$ and $\Delta'_{i,j}$ denote the projections of $\Delta_n$ and $\Delta'_n$, respectively, to $A_i \otimes A_j$. One may check via the axioms defining a Hopf algebra that $\Delta'_{i,j} = 0$ when either of $i$ or $j$ is zero.
\end{defn}

The reduced coproduct $\Delta'$ induces a (graded) Lie coalgebra structure on $I(U)/I(U)^2$, and the dual Lie algebra $(I(U)/I(U)^2)^{\vee}$ is the Lie algebra $\nf$ of $U$. It is a strictly negatively graded pro-nilpotent Lie algebra. We let $\Uc U = \Uc \nf$ denote the dual Hopf algebra of $\Oc(U)$, which is the (completed) universal enveloping algebra of $\nf$. The composition
\[
\mathrm{ker}(\Delta') \hookrightarrow \Uc U = \Oc(U)^{\vee} \twoheadrightarrow I(U)^{\vee}
\]
induces an isomorphism between the set of primitive elements of $\Uc U$ and the Lie algebra $\nf = (I(U)/I(U)^2)^{\vee} \subseteq I(U)^{\vee}$.

Furthermore, for a $\Qb$-algebra $R$, we may identify $U(R)$ with the group of grouplike elements in $(\Uc U)_R$, i.e., $x$ such that
\[
\Delta x = x \otimes x.
\]
Evaluation of an an element of $\Oc(U)$ on an element of $U(R)$ is given by evaluation on the corresponding grouplike element of $(\Uc U)_R$.


The functor sending $U$ to $\nf$ is known to be an equivalence of categories between graded pro-unipotent groups and strictly negatively graded pro-nilpotent Lie algebras. For each positive integer $n$, the set of elements $\nf_{<-n}$ is a Lie ideal, and we denote by $\nf_{\ge -n}$ the quotient $\nf/\nf_{<-n}$. We denote the corresponding quotient pro-unipotent group by $$U_{\ge -n},$$
and it is a unipotent algebraic group. In fact, $U$ is the inverse limit
\[
\varprojlim_n U_{\ge -n}.
\]

\begin{ex}
If $\nf$ is a one-dimensional Lie algebra generated by an element $x$, then $\nf$ is nilpotent. We have $\Uc \nf = \Qb[[x]]$, and $\Oc(U) = \Qb[f_x]$, with both $x$ and $f_x$ primitive. Then $U$ is the group $\Gb_a$, and the set of grouplike elements of $\Uc \nf$ is the set of elements of the form $\exp(rx)$ for $r \in \Qb$. In particular, this demonstrates the usefulness of taking completed universal enveloping algebras.
\end{ex}

\subsubsection{Free Pro-unipotent Groups}\label{sec:free_prounip}

\begin{defn}\label{defn:free_prounip}
If $V$ is a strictly negative graded vector space, we may form the free pro-nilpotent Lie algebra on $V$ as follows. We take the graded tensor algebra $TV$ on $V$ and put the unique coproduct on it such that all elements of $V$ are primitive. The subspace of primitive elements of $TV$ forms a graded pro-nilpotent\footnote{The Lie algebra is pro-nilpotent rather than free by Remark \ref{rem:graded}, because it is negatively graded.} Lie algebra, denoted $\nf(V)$, with corresponding pro-unipotent group denoted $U(V)$. Then $\nf(V)$, $U(V)$ are known as the \emph{free pro-nilpotent Lie algebra} and \emph{free pro-unipotent group}, respectively, on the graded vector space $V$.\end{defn}

\begin{rem}\label{rem:adjoint}
The construction $V \mapsto \nf(V)$ is left adjoint to the forgetful functor from graded pro-nilpotent Lie algebras to graded vector spaces.
\end{rem}

\begin{defn} If $I$ is an index set with a degree function $d \colon I \to \Zb_{<0}$ with finite fibers, then the free vector space $V^I$ on the set $I$ naturally obtains the structure of a negatively graded
In this case, the \emph{free pro-unipotent group on the set $I$} is just the free pro-unipotent group on $V^I$.
\end{defn}

In particular, the free pro-unipotent group on a graded vector space is isomorphic to the free pro-unipotent group on a (graded) basis of that vector space. The Lie algebra is the pro-nilpotent completion of the usual (non-graded) free Lie algebra on the set $I$ and as such is generated by the elements of $I$.

The graded dual Hopf algebra of $TV = \Uc \nf(V)$ is the coordinate ring $\Oc(U(V))$. Let $\{x_i\}$ be a graded basis of $V$, so that words $w$ in the $\{x_i\}$ form a basis of $\Uc \nf(V)$, and let $\{f_w\}_w$ denote the basis of $\Oc(U(V))$ dual to $\{w\}$. Then $\Oc(U(V))$ is isomorphic to the \emph{free shuffle algebra} on the graded vector space $V^{\vee}$. Its coproduct is known as the \emph{deconcatenation coproduct} and is given by
\[
\Delta f_w \colonequals \sum_{w_1 w_2 = w} f_{w_1} \otimes f_{w_2},
\]and the (commutative) product $\Sha$ on $\Oc(U(V))$, known as the \emph{shuffle product}, is given by:

$$
f_{w_1} \Sha f_{w_2} \colonequals \sum_{\sigma \in \Sha(\ell(w_1),\ell(w_2))} \sigma(f_{w_1 w_2})
,$$
where $\ell$ denotes the length of a word, $\Sha(\ell(w_1),\ell(w_2)) \subseteq S_{\ell(w_1)+\ell(w_2)}$ denotes the group of shuffle permutations of type $(\ell(w_1),\ell(w_2))$, and $w_1 w_2$ denotes concatenation of words.

\begin{rem}\label{rem:single_letter_prim}
It follows from the definition of the deconcatenation coproduct that a word consisting of a single letter is a primitive element of the free shuffle algebra.
\end{rem}

\subsubsection{Conventions for Products}

Let $\alpha$ and $\beta$ be two paths in a space $X$. Then in the literature, the sybmol $\alpha \beta$ can have two different meanings. It can either denote:
\begin{enumerate}[(i)]
    \item\label{lexical} The path given by going along $\alpha$ and then $\beta$
    \item\label{functional} The path given by going along $\beta$ and then $\alpha$
\end{enumerate}

The first is known as the `lexical' order and the second as the `functional' order. In this paper, we will use the lexical order, in contrast to the convention of \cite{MTMUE} (but consistent with \cite{BrownIntegral}). However, we would like to take a moment to explain how these two conventions help clarify differing conventions in the literature for iterated integrals, multiple zeta values, polylogarithms, and more. We will refer to these differing conventions as the lexical and functional conventions, respectively.

In fact, either convention necessitates a particular convention for iterated integrals. In general, one wants a ``coproduct'' formula to hold for iterated integrals (c.f. \cite[\S 2.1]{BrownDecomp}, \cite[5.1(ii)]{HainClassical}, or \cite[Proposition 5]{HainNotes}), by which we mean
\begin{equation}\label{eqn:int_coprod}
\int_{\alpha \beta} \omega_1 \cdots \omega_n = \sum_{i=0}^n \int_{\alpha} \omega_1 \cdots \omega_i \int_{\beta} \omega_{i+1} \cdots \omega_n
\end{equation}

In order for the coproduct to take this nice form (\ref{eqn:int_coprod}), our convention for path composition determines our convention for iterated integrals. More specifically, those that use the lexical order for path composition use the formula

\[
I(\gamma(0);\omega_1,\cdots,\omega_n;\gamma(1)) \colonequals \int_\gamma \omega_1 \cdots \omega_n = \int_{0 \le t_1 \le \cdots \le t_n \le 1} f_1(t_1) \cdots f_n(t_n) dt_1 \cdots dt_n,
\]
where $\gamma^*(\omega_i)=f_i(t) dt$,
and those that use the functional order for path composition use the formula

\[
I(\gamma(0);\omega_1,\cdots,\omega_n;\gamma(1)) \colonequals \int_\gamma \omega_1 \cdots \omega_n = \int_{0 \le t_n \le \cdots \le t_1 \le 1} f_1(t_1) \cdots f_n(t_n) dt_1 \cdots dt_n.
\]

Given that these conventions are opposite, the corresponding conventions for the iterated integral expression for polylogarithms must be opposite. More precisely, the iterated integral defining a multiple polylogarithm must always begin with $\frac{dz}{1-z}$ in the lexical convention, while it must always end with $\frac{dz}{1-z}$ in the functional convention. More precisely, let us define
\begin{equation}\label{eqn:polylog_series}
\Li_{s_1,\cdots,s_r}(z) \colonequals \sum_{0<k_1<\cdots<k_r} \frac{z^{k_r}}{k_1^{s_1} \cdots k_r^{s_r}}.
\end{equation}
Set $e^0 = \frac{dz}{z}$ and $e^1 = \frac{dz}{1-z}$. Then using the lexical convention for iterated integration, we have:
$$
\Li_{s_1,\cdots,s_r}(z) = I(0;e^1, \underbrace{e^0, \cdots, e^0}_{s_1-1}, e^1, \underbrace{e^0, \cdots, e^0}_{s_2-1}, \cdots, e^1, \underbrace{e^0, \cdots, e^0}_{s_r-1};z).
$$

In fact, the definition itself of $\Li_{s_1,\cdots,s_r}(z)$ depends on the convention. More specifically, if we were to use the functional convention for iterated integrals in tandem with (\ref{eqn:polylog_series}), we would get:
$$
\Li_{s_1,\cdots,s_r}(z) = I(0; \underbrace{e^0, \cdots, e^0}_{s_r-1}, e^1, \underbrace{e^0, \cdots, e^0}_{s_{r-1}-1}, \cdots, e^1, \underbrace{e^0, \cdots, e^0}_{s_1-1}, e^1;z).
$$

This is precisely the formula that appears in \cite[(1.4)]{BrownElliptic}. However, most authors prefer the $s_i$'s to appear in the iterated integral in the same order as they do in the argument of the function. Therefore, almost all papers that use the functional convention for iterated integration will write:

$$
\Li_{s_1,\cdots,s_r}(z) \colonequals \sum_{k_1>\cdots>k_r>0} \frac{z^{k_r}}{k_1^{s_1} \cdots k_r^{s_r}}.
$$

As a result, one then writes:
$$
\Li_{s_1,\cdots,s_r}(z) = I(0; \underbrace{e^0, \cdots, e^0}_{s_1-1}, e^1, \underbrace{e^0, \cdots, e^0}_{s_{2}-1}, \cdots, e^1, \underbrace{e^0, \cdots, e^0}_{s_r-1}, e^1;z).
$$

Thus, the convention one uses for path composition determines the convention one uses for $\Li_{s_1,\cdots,s_r}(z)$ (except in \cite{BrownElliptic}). Similarly, the two conventions for multiple zeta values follow this paradigm. Specifically, those who use the lexical convention write $$\zeta(s_1,\cdots,s_r) = \sum_{k_1>\cdots>k_r>0} \frac{1}{k_1^{s_1} \cdots k_r^{s_r}},$$ and those who use the functional convention write $$\zeta(s_1,\cdots,s_r) = \sum_{k_1>\cdots>k_r>0} \frac{1}{k_1^{s_1} \cdots k_r^{s_r}}.$$

Note, however, that $\Li_n$ always denotes the same function (both as a multi-valued complex analytic function, a Coleman function, and an abstract function on the de Rham fundamental group), no matter which convention one uses. In fact, this brings us back to the two conventions for path composition. The fact that some write $\Liu_n$ (c.f. Section \ref{fund_grp_coords}) as $e^1 \underbrace{e^0 \cdots e^0}_{n-1}$ and others write it as $\underbrace{e^0 \cdots e^0}_{n-1} e^1$, yet both denote the exact same regular function on the unipotent de Rham fundamental group of $\Poneminusthreepoints$, is due to the differing conventions for path composition.

As we use the lexical convention, one will find the coproduct formula
\begin{equation}\label{eqn:polylog_coprod}
\Delta'\Liu_n = 
 \sum_{i=1}^{n-1} \Liu_{n-i} \otimes  \frac{(\logu)^{\Sha i}}{i!}
 \end{equation}
in this paper (\ref{prop:polylog_coprod}). With the other convention, one must write \[\Delta'\Liu_n = \sum_{i=1}^{n-1} \frac{(\logu)^{\Sha i}}{i!} \otimes \Liu_{n-i}.\]
 
More subtly, these conventions affect the convention one uses for the motivic coproduct. More specifically, if we use the lexical convention, we want to also be able to write
\begin{equation}\label{eqn:motivic_polylog_coprod}
\Delta'\Liu_n(z) = 
 \sum_{i=1}^{n-1} \Liu_{n-i}(z) \otimes  \frac{(\logu(z))^{\Sha i}}{i!}
\end{equation}
rather than its opposite. This formula is correct as long as we use the lexical order for composition in $\pi_1^\un(Z)$ (which, in particular, comes out in how one writes the Goncharov coproduct of \cite{GonGal}; Goncharov uses the lexical order himself). Theoretically, one could use one convention for composition in $\pi_1^\un(X)$ and another for composition in $\pi_1^\un(Z)$, but that would cause formulas (\ref{eqn:polylog_coprod}) and (\ref{eqn:motivic_polylog_coprod}) to conflict with each other.

One important implication of the difference in formulas for the motivic coproduct is:

\begin{rem}\label{rem:different_convention_13}
Our $f_{\sigma \tau}$ is actually the $\phi_{1.3}$ of \cite[7.6.1]{MTMUE}, even though the notation would suggest it is $\phi_{3.1}$.
\end{rem}

In terms of authors and sources, one may find the lexical convention and/or the other conventions that go along with it in \cite[\S 1.2]{BrownICM}, \cite[\S 5]{HainClassical}, \cite[Definition 2]{HainNotes}, \cite[(9.1)]{BrownIntegral}, \cite[Definition 1.1]{hain87}, \cite[Definition 2.1]{BrownDecomp}, \cite{GonGal}, \cite[(1.1.1)]{ChenIterated}, \cite[(0.1)]{FurushoI}, \cite[(2)]{GonRegulators}, \cite[(1), (2), (4), and Definition 1.2]{GonMPMTM}, \cite[\S 11]{GonICM}, \cite{ZagierModForm}, \cite{ZagierValues}, \cite{TerasomaMTM}, and \cite{rabi}. One may also find conventions for multiple zeta values consistent with this convention in other articles by Francis Brown, such as \cite{BrownMTZ}, \cite{BrownDepthGraded}, \cite{BrownICM}, and \cite{BrownSingle}.

On the other hand, in \cite[1.16]{MTMUE}, \cite[5.16.1]{DelGon05}, \cite[(79)]{CartierBourbaki}, \cite[(0.1) and 5.1A]{DelMuN}, \cite{deligne13}, \cite{HoffmanMHS}, \cite{RacinetDouble}, \cite{SouderesMotDouble}, \cite[2.2.4]{MTMUEII}, and \cite{brown04}, one finds the functional conventions.

\subsection{The Various Fundamental Groups}

\subsubsection{The Mixed Tate Fundamental Group of $Z$}\label{sec:mixed_tate_fund_grp}

\begin{defn}\label{defn:open_integer_scheme}
An \emph{open integer scheme} is an open subscheme of $\Spec \Oo_K$, where $K$ is a number field and $\Oo_K$ its ring of integers. 
\end{defn}

Let $Z \subseteq \Spec \Oo_K$ be an open integer scheme, $\MT(Z, \QQ)$ its Tannakian category of mixed Tate motives with $\QQ$-coefficients, which exists by \cite{DelGon05}\footnote{It is constructed by putting a t-structure due to \cite{Levine93} on a certain subcategory of Voevodsky's triangulated category $DM_{gm}(K)$ of \cite{Voevodsky00}, taking the heart of that t-structure, and finally taking the subcategory of objects with good reduction at closed points of $Z$. The category $DM_{gm}(K)$ is defined by taking a certain localization of the category of complexes of smooth varieties over a field with correspondences as morphisms, then taking its pseudo-abelian envelope, and finally inverting the Tate object $\Qb(1)$. However, we will need only the properties of $\MT(Z, \QQ)$, not its construction.}. This is a category with realization functors
\[
\mathrm{real}^\sigma \colon \MT(Z, \QQ) \to \mathrm{MHS}_{\Qb}
\]
to mixed Hodge structures with $\Qb$-coefficients for each embedding $\sigma \colon K \hookrightarrow \Cb$ and
\[
\mathrm{real}^\ell \colon \MT(Z, \QQ) \to \mathrm{Rep}_{\Ql}(G_K)
\]
to $\ell$-adic representations of $G_K$ for each prime $\ell$. The image of each realization functor consists of mixed Tate objects, i.e., objects with a composition series consisting of tensor powers of the image of the Tate object $\Qb(1) \colonequals h_2(\Pb^1;\Qb)$.

\begin{defn}\label{defn:good_reduction}
A continuous $\Qb_{\ell}$-representation of $G_K$ for a number field $K$ is said to have \emph{good reduction} at a non-archimedean place $v$ of $K$ if either $v \nmid \ell$, and the representation is unramified at $v$, or if $v \mid \ell$, and the representation is crystalline at $v$.
\end{defn}

The $\ell$-adic realizations of an object of $\MT(Z, \QQ)$ form a compatible system of $\Qb_{\ell}$-Galois representations with good reduction at closed points of $Z$ (in particular, crystalline at primes dividing $\ell$). If $(X,D)$ is a pair of a scheme and codimension $1$ subscheme, both smooth and proper over $Z$ and rationally connected, then the relative cohomology $h^*(X,D;\Qb)$ is an object of this category such that
\[
\mathrm{real}^\sigma(h^*(X,D;\Qb)) = H^*_{\mathrm{Betti}}(X_\sigma^{\mathrm{an}}(\Cb),D_{\sigma}^{\mathrm{an}}(\Cb);\Qb),
\]
\[
\mathrm{real}^\ell(h^*(X,D;\Qb)) = H^*_{\mathrm{\acute{e}t}}(X_{\overline{K}},D_{\overline{K}};\Ql),
\]
with their associated mixed Hodge structure and continuous $G_K$-action, respectively.

The only simple objects of $\MT(Z, \QQ)$ are the objects $\Qb(n) \colonequals \Qb(1)^{\otimes n}$ for $n \in \Zb$, each object has a finite composition series, and the extensions are determined by the fact that
\[
\Ext^1(\Qb(0),\Qb(n)) = K_{2n-1}(Z)_\Qb
\]
\[
\Ext^i = 0 \hspace{1cm} \forall \, i \ge 2.
\]

The groups $K_{2n-1}(Z)_{\Qb}$ are known by the work of Borel (\cite{Borel74}). For $n=1$, we have $K_1(Z) = \Oc(Z)^\times$, and for $n \ge 2$, $K_{2n-1}(Z)_\Qb = K_{2n-1}(K)_\Qb$ has dimension $r_2$ for $n$ even and $r_1+r_2$ for $n$ odd, where $r_1$ and $r_2$ are the numbers of real and complex places of $K$, respectively.

\begin{defn}\label{defn:canonical_fiber_functor}
Let $M$ be an object of $\MT(Z, \QQ)$. Then $M$ has an increasing filtration $W_i M$ known as the \emph{weight filtration}. The quotient $W_i M / W_{i-1} M$ is trivial when $i$ is odd and is isomorphic to a direct sum of copies of $\Qb(-i/2)$ when $i$ is even. We let
\[
\mathrm{Can}(M) \colonequals \bigoplus_{i \in \Zb} \mathrm{Hom}_{\MT(Z, \QQ)}(\Qb(-i), W_{2i} M / W_{2i-1} M),
\]
and we call this the \emph{canonical fiber functor}.
\end{defn}

Then the category $\MT(Z, \QQ)$ is a neutral $\Qb$-linear Tannakian category with fiber functor $\mathrm{Can}$, and we let $\pi_1^{\MT}(Z)$ denote its Tannakian fundamental group, which is therefore a pro-algebraic group over $\Qb$.

The subcategory of simple objects of $\MT(Z, \QQ)$ consists of direct sums of tensor powers of $\Qb(1)$ and is therefore equivalent as a Tannakian category to the category of representations of $\Gm$. This inclusion induces a quotient map $\pi_1^{\MT}(Z) \twoheadrightarrow \Gm$, and we let $\pi_1^\un(Z)$ denote the kernel of this quotient. The functor sending an object $M$ of $\MT(Z, \QQ)$ to the direct sum $\bigoplus_{i \in \Zb} W_i M / W_{i-1} M$ gives a splitting of this inclusion of categories, which implies that the quotient map $\pi_1^{\MT}(Z) \twoheadrightarrow \Gm$ splits.

This implies that $\MT(Z, \QQ)$ has fundamental group
\[
\pi_1^{\MT}(Z) = \pi_1^\un(Z) \rtimes \Gm,
\]
where $\pi_1^\un(Z)$ is the maximal pro-unipotent subgroup of $\pi_1^{\MT}(Z)$. The action of $\Gm$ on $\pi_1^\un(Z)$ by conjugation gives an action of $\Gm$, or equivalently, a grading, on the Hopf algebra of $\pi_1^\un(Z)$. This associated graded Hopf algebra is denoted
\[
\bigoplus_{i=0}^\infty A(Z)_i = A(Z) := \Oo(\pi_1^\un(Z)),
\]
where $A(Z)_i$ denotes the $n$th graded piece. We refer to the degree on $A(Z)$ as the \emph{half-weight}, since it is half the ordinary motivic weight.

In fact, the description of the $\mathrm{Ext}$ groups gives us the following information. It gives a canonical embedding of graded vector spaces $$\bigoplus_{n=1}^\infty K_{2n-1}(Z)_\Qb \hookrightarrow A(Z),$$
with $K_{2n-1}(Z)_{\Qb}$ in degree $n$, and whose image is the set of primitive elements of $A(Z)$. Equivalently, this gives a canonical isomorphism $$\pi_1^\un(Z)^{\mathrm{ab}} \cong \left(\bigoplus_{n=1}^\infty K_{2n-1}(Z)_\Qb \right)^{\vee}.$$ In fact, this canonical isomorphism extends to an isomorphism between $\pi_1^\un(Z)$ and the free pro-unipotent group (Definition \ref{defn:free_prounip}) on the graded vector space $\left(\bigoplus_{n=1}^\infty K_{2n-1}(Z)_\Qb \right)^{\vee}$, with $(K_{2n-1}(Z))^{\vee}$ in degree $-n$, but this extension is not canonical. This last fact tells us that there is a non-canonical isomorphism between $A(Z)$ and the free shuffle algebra on the graded vector space $\bigoplus_{n=1}^\infty K_{2n-1}(Z)_\Qb$, with $K_{2n-1}(Z)$ in degree $n$. This non-canonicity is the key to a later consideration; see Remark \ref{rem:canonical2}.

Furthermore, it is not hard to show that $\Ext^1(\Qb(0),\Qb(n))$ is isomorphic to the space of degree $n$ primitive elements of $A(Z)$; see Proposition \ref{prop:exact_sequence} below.

For $Z' \subseteq Z$ an open subscheme, we have an inclusion $\MT(Z, \QQ) \subseteq \MT(Z', \QQ)$, which gives rise to a quotient map $\pi_1^{\MT}(Z') \twoheadrightarrow \pi_1^{\MT}(Z)$ that is an isomorphism on $\Gm$ and hence to an inclusion
$$
A(Z) \subseteq A(Z')
$$
of graded Hopf algebras. There is also a graded Hopf algebra $A(\Spec{K})$, which is the union of $A(Z)$ for $Z \subseteq \Spec{\Oc_K}$, and we may view all such $A(Z)$ as lying inside $A(\Spec{K})$. If $\pf$ is a point of $\Spec{\Oc_K} \setminus Z$ and $\alpha \in A(Z)$, we say $\alpha$ is \emph{unramified at $\pf$} if $$\alpha \in A(Z \cup \{\pf\}) \subseteq A(Z).$$

\subsubsection{The de Rham Unipotent Fundamental Group of $X$}
\
For the rest of this paper, we let $X=\Poneminusthreepoints$ over $\mathbb{Z}$.


\begin{defn}\label{defn:de_rham_fund_grp}
For $x$ an element of $X(\Qb)$ or a rational tangential basepoint (as defined in \cite[\S 15]{Deligne89}), we let \[\pi_1(X;x)\] denote the unipotent de Rham fundamental group of $X_{\Qb}$ at $x$. It is the fundamental group of the Tannakian category of algebraic vector bundles with nilpotent connection on $X_{\Qb}$ with fiber functor the fiber at $x$. For $x=\vec{1}_0$, the vector $1$ at the point $0$, we denote it simply by
\[
\pi_1^\un(X).
\]
\end{defn}

This pro-unipotent group over $\Qb$ is a free pro-unipotent group on the graded vector space consisting of $H^{\mathrm{dR}}_1(X_{\Qb})$ in degree $-1$ (and zero in other degrees). Dually, its coordinate ring is generated by words in holomorphic differential forms on $X$, which are integrands of interated integrals.

By the construction in \cite[\S 3]{DelGon05}, $\pi_1^\un(X)$ is in $\MT(Z, \QQ)$ (in the sense that its coordinate ring and Lie algebra are each an Ind-object and pro-object, respectively, of $\MT(Z, \QQ)$). It therefore carries an action of $\pi_1^{\MT}(Z)$, whose restriction to $\Gb_m$ induces the grading.

More generally, for any two rational basepoints $a,b$ of $X$ (tangential or ordinary), there is an object $_bP_a$\footnote{As \cite{DelGon05} uses the functional convention for path composition, this is in fact the torsor of paths from $a$ to $b$.} in $\MT(Z, \QQ)$ (again by \cite[\S 3]{DelGon05}). It is a torsor on the left under $\pi_1(X;b)$ and on the right under $\pi_1(X;a)$, and it reduces to $\pi_1(X;x)$ when $a=b=x$. There is a canonical $\Qb$-point of the torsor $_bP_a$, known as the canonical de Rham path, defined as the unique point in $F^0{_bP_a}$. This implies that as a group, $\pi_1(X;x)$ does not depend on the basepoint $x$. However, the canonical point is not in general fixed under the action of $\pi_1^{\MT}(Z)$, and thus the $\pi_1^{\MT}(Z)$-action does depend on $x$.


\begin{rem}\label{rem:independence_of_basepoint}
By an argument analogous to the proof of \cite[Corollary 2.9]{nabsd}, one may check that all of these constructions (in particular, the Chabauty-Kim locus, c.f., Section \ref{sec:CK_locus}) are the same if we replace $\vec{1}_0$ by any other $Z$-integral basepoint of $X$.
\end{rem}

\begin{rem}
We will often consider $\pi_1^{\MT}(Z)$-equivariant quotients $\pi_1^\un(X) \twoheadrightarrow \Pi$, especially when $\Pi$ is finite-dimensional as a scheme over $\Qb$ (hence an algebraic group). \emph{Unless otherwise stated, it is always understood that $\Pi$ is such a quotient.}

A standard example is $\Pi_n \colonequals \pi_1^\un(X)_{\ge -n}$. However, in most of our calculations, we will be concerned with quotients that factor through $\pi_1^\PL(X)$, whose precise definition we recall in Section \ref{sec:PL}.
\end{rem}

We write $\nf(Z)$, $\nf(X)$, and $\nf^\PL(X)$ for the corresponding Lie algebras.

\subsection{Cohomology and Cocycles}

We let $H^1(\pi_1^{\MT}(Z), \pi_1^\un(X))$ denote the (pointed) set of $\pi_1^{\MT}(Z)$-equivariant torsor schemes under $\pi_1^\un(X)$.


For $b \in X(Z)$, the torsor $_b P_{\vec{1}_0}$ is a $\pi_1^{\MT}(Z)$-equivariant torsor under $_{\vec{1}_0} P_{\vec{1}_0} = \pi_1^\un(X)$, hence an element of $H^1(\pi_1^{\MT}(Z), \pi_1^\un(X))$.

We therefore have the Kummer map
\[
X(Z) \xrightarrow{\kappa} 
H^1(\pi_1^{\MT}(Z), \pi_1^\un(X)),
\]
and by composition with the map induced by $\pi_1^\un(X) \twoheadrightarrow \Pi$,
\[
X(Z) \xrightarrow{\kappa} 
H^1(\pi_1^{\MT}(Z), \Pi).
\]

\begin{defn}\label{defn:cocycles}
For any $\Qb$-algebra $R$, we define
\[
\ZPi(R) := Z^1(\pi_1^\un(Z)_R, \Pi_R)^{\Gm},
\]
which is the set of morphisms of schemes from $\pi_1^\un(Z)_R$ to $\Pi_R$ over $R$, equivariant with respect to the $\Gm$-action, and satisfying the cocycle condition on the $R'$-points for any $R$-algebra $R'$.
\end{defn}

\cite[Proposition 6.4]{BrownIntegral} ensures that this is representable by a scheme (also see Corollary \ref{cor:psi_isom}). We thus write
\[
\ZPi = \ZPi(Z) := Z^1(\pi_1^\un(Z), \Pi)^{\Gm}
\]
for the scheme of $\Gm$-equivariant cocycles. If $\Pi$ is finite-dimensional, then this is in fact a finite-dimensional variety over $\Qb$.

By \cite[Proposition 5.2.1]{MTMUE}, we have
\[
H^1(\pi_1^{\MT}(Z), \Pi) = \ZPi(\Qb).
\]

\begin{defn}\label{defn:universal_cocycle}
The \emph{universal cocycle evaluation map} $$\evalu_\Pi \colon \ZPi \times \pi_1^\un(Z) \to \Pi \times \pi_1^\un(Z)$$ is defined on the functors of points as follows. For a $\Qb$-algebra $R$ and an element $(c,\gamma) \in (\ZPi)_R(R) \times \pi_1^\un(Z)_R(R) = (\ZPi \times \pi_1^\un(Z))(R)$, we have $c(\gamma) \in \Pi_R(R) = \Pi(R)$. We define $\evalu_\Pi(c,\gamma)$ to be the pair $(c(\gamma),\gamma)$.
\end{defn}


In fact, the morphism $\evalu_\Pi$ lies over the identity morphism on $\pi_1^\un(Z)$, so letting $\Kc$ denote the function field of $\pi_1^\un(Z)$, we also have the base change from $\pi_1^\un(Z)$ to $\Spec{\Kc}$
$$
\evalu_\Pi^{\Kc} \colon (\ZPi)_{\Kc} \to \Pi_{\Kc}.
$$

\begin{rem}\label{rem:geometric_step}
The advantage of working over $\Kc$ rather than $\pi_1^\un(Z)$ is that if $\Pi$ is finite dimensional, then $\evalu_\Pi^{\Kc}$ is a morphism of affine spaces over the field $\Kc$, so one may use elimination theory to compute its scheme-theoretic image. This computation is our ``geometric step'' (see Section \ref{sec:geometric_step}).
\end{rem}

\begin{defn}\label{defn:CK_ideal}
We let $\IzPi$ denote the ideal of functions in the coordinate ring of $\Pi_{\Kc}$ vanishing on the image of 
$\evalu_\Pi^{\Kc}$. This is known as the \emph{Chabauty-Kim ideal} (associated to $\Pi$). For $\Pi=\Pi_n$, we denote it by $\Ic^Z_n$.
\end{defn}


\subsection{\texorpdfstring{$\pf$}{p}-adic Realization and Kim's Cutter}

Let $\pf$ be a closed point of $Z$. For simplicity, we suppose that $Z_{\pf} \cong \Spec{\Zp}$.

If $\omega \in \Oc(\pi_1^\un(X))$, and $a,b$ are $\Zp$-basepoints of $X$ (rational or tangential), then we can extract an element of $\Qp$ known as the Coleman iterated integral $\int_a^b \omega$. The Coleman iterated integral is originally due to Coleman (\cite{Coleman}) and was reformulated by Besser \cite{Besser} into the form that is used in \cite{MTMUE}.

\begin{defn}\label{defn:local_kummer}
Fix $a = \vec{1}_0$, and let $b$ vary over $X(Z_{\pf})$. We define the \emph{local Kummer map}
$$
X(Z_{\pf}) \xrightarrow{\kappa_{\pf}} \pi_1^\un(X)(\Qp)
$$
by sending a regular function $\omega$ on $\pi_1^\un(X)$ to the Coleman function $\kappa_{\pf}^{\sharp}(\omega) \colon X(Z_{\pf}) \to \Qp$ defined by $b \mapsto \int_a^b \omega$. Its composition with $\pi_1^\un(X)(\Qp) \rightarrow \Pi(\Qp)$ is also denoted by $\kappa_{\pf}$.
\end{defn}

The local Kummer map is Coleman-analytic, meaning that regular functions on $\pi_1^\un(X)$ pull back to Coleman functions on $X(Z_{\pf})$. These are locally analytic functions, and a nonzero such function has finitely many zeroes.

\begin{rem}
The local Kummer map in this form was originally referred to in \cite{kim05} as the \emph{$p$-adic unipotent Albanese map}. It is the same as the map $\alpha$ of \cite[1.3]{MTMUE}. The latter is defined by sending $b$ to the torsor of paths from $a$ to $b$ (with its structure as a filtered $\phi$-module)
\end{rem}

\subsubsection{$\pf$-adic Period Map}


In addition, there is a morphism $\Spec{\Qp} \to \pi_1^\un(Z)$ (given by the map $\eta$ of \cite{ChatUnv} for $x=\pf$), or equivalently a $\Qb$-algebra homomorphism $$\mathrm{per}_{\pf} \colon A(Z) \to \Qp,$$ known as the \emph{$\pf$-adic period map}. While elements of $A(Z)$ are represented by formal (motivic) iterated integrals, this homomorphism takes the value of the iterated integral in the sense of Coleman integration.

The following may be regarded as a $p$-adic analogue of a small piece of the Kontsevich-Zagier period conjecture (\cite{KontZag}). It has been in folklore for some time and appears in the literature for $K/\Qb$ abelian as \cite[Conjecture 4]{Yamashita10}.

\begin{conj}[$p$-adic Period Conjecture]\label{conj:period_conj}
For any open integer scheme $Z$, the period map $\mathrm{per}_{\pf} \colon A(Z) \to \Qp$ is injective.
\end{conj}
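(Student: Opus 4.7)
The plan is to reduce the question to algebraic independence of specific $p$-adic periods. Since $A(Z)$ is noncanonically isomorphic to the free shuffle algebra on the graded vector space $V \colonequals \bigoplus_{n \ge 1} K_{2n-1}(Z)_{\Qb}$, and $\mathrm{per}_{\pf}$ is a ring homomorphism, injectivity on $A(Z)$ is equivalent to injectivity on the free shuffle algebra image of a graded basis of $V$. Thus the first step is to fix a concrete homogeneous basis of $V$: in degree one, the elements $\logu(\ell)$ for $\ell$ a prime inverted on $Z$ (these span $K_1(Z)_{\Qb} = \Oc(Z)^\times_\Qb$); in each odd degree $2n+1 \ge 3$, the motivic zeta value $\zetau(2n+1)$ (and, for $K \ne \Qb$, further Borel regulator classes built from $\Liu_{2n+1}$ of explicit algebraic numbers). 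The second step is to observe that because of the shuffle/deconcatenation structure, an element of $A(Z)$ lies in the kernel of $\mathrm{per}_{\pf}$ only if some $\Qb$-polynomial in the chosen generators maps to zero in $\Qp$; so injectivity is equivalent to algebraic independence over $\Qb$ of the images $\{\logp(\ell)\} \cup \{\zeta_p(2n+1)\} \cup \{\text{Borel period images}\}$ inside $\Qp$.

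Given this reduction, the approach I would try is a weight-by-weight argument. For the weight-one piece, algebraic independence of $\{\logp(\ell) : \ell \in S\}$ in $\Qp$ is the $p$-adic analogue of the Leopoldt-type statement; one can make partial progress using Brumer's theorem (which gives $\Qb$-linear independence of $p$-adic logarithms of algebraic numbers). In higher weight, one would attempt induction on the half-weight using Goncharov's coproduct formula (\ref{eqn:motivic_polylog_coprod}): if $\alpha \in A(Z)_n$ is in the kernel of $\mathrm{per}_{\pf}$, apply $\Delta'$ and use the inductive hypothesis on $A(Z)_{<n}$ to conclude $\alpha$ is primitive, hence a $\Qb$-multiple of $\zetau(n)$ (when $n$ is odd) or zero (when $n$ is even, by Proposition \ref{prop:exact_sequence}). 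This reduces the full conjecture to the single statement that each $\zeta_p(2n+1)$ is nonzero, or more strongly that the list of $p$-adic zeta values is $\Qb$-linearly independent from products of lower-weight periods.

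The hard part, and the reason this is stated only as a conjecture, is exactly this last reduction: we have essentially no nontrivial transcendence or algebraic-independence results for $p$-adic zeta values. Even irrationality of a single $\zeta_p(2n+1)$ is open for generic $p$, and Brumer's theorem does not extend past weight one. The $p$-adic analogue of the Kontsevich--Zagier formalism that would underwrite such a proof (a Nesterenko-style or Siegel--Shidlovsky-style result applicable to Coleman iterated integrals at tangential base points) does not yet exist. So while the structural half of the proof is clean and essentially forced by the Tannakian description of $\MT(Z,\QQ)$, the arithmetic half is a fundamental open problem, and I would not expect to produce a complete proof; a realistic outcome of the strategy above is a conditional statement, reducing Conjecture \ref{conj:period_conj} to a precise and explicit algebraic-independence conjecture for $p$-adic zeta values and $p$-adic logarithms of $S$-units.
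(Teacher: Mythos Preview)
The statement is a conjecture; the paper offers no proof, only citing it as folklore (and as \cite[Conjecture 4]{Yamashita10} in the abelian case). So there is nothing to compare against, and you are right that a complete proof is out of reach. However, your proposed reduction contains a genuine structural error.

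You claim that injectivity of $\mathrm{per}_{\pf}$ on $A(Z)$ is equivalent to algebraic independence over $\Qb$ of the finite list $\{\logp(\ell)\}_{\ell \in S} \cup \{\zeta_p(2n+1)\}_{n \ge 1}$. This is false: the free shuffle algebra on $V = \bigoplus_n K_{2n-1}(Z)_\Qb$ is \emph{not} generated as a commutative $\Qb$-algebra by the single-letter words $f_\tau, f_{\sigma_{2n+1}}$. By Radford's theorem it is a polynomial algebra on the Lyndon words in a basis of $V$, so for instance $f_{\sigma_3 \tau}$ is algebraically independent of $f_{\sigma_3}$ and $f_\tau$. (Indeed, the whole point of Section~\ref{sec:galois_coord_ex} of the paper is to identify $\mathrm{per}_p(f_{\sigma_3\tau})$ as a genuinely new period, not a polynomial in $\zetap(3)$ and $\logp(\ell)$.) Injectivity of $\mathrm{per}_\pf$ is thus equivalent to algebraic independence of the images of \emph{all} Lyndon words, an infinite family of iterated-integral periods far beyond single $p$-adic zeta values.

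Your inductive step also fails: you propose to apply $\Delta'$ to an $\alpha \in \ker(\mathrm{per}_\pf)$ and invoke the inductive hypothesis to force $\alpha$ primitive. But $\mathrm{per}_\pf$ is only a ring homomorphism to $\Qp$; it carries no coalgebra structure, so $\mathrm{per}_\pf(\alpha)=0$ gives no constraint on $\Delta'\alpha$. The coproduct cannot be used to descend the kernel through weights in this way. What the coproduct \emph{does} buy (and what the paper exploits) is the ability to detect which combinations of motivic polylogarithms are primitive, a purely motivic computation independent of $\mathrm{per}_\pf$.
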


\subsubsection{Kim's Cutter}

Viewing $\evalu_\Pi$ as a morphism of schemes over $\pi_1^\un(Z)$ and base-changing along the $\pf$-adic period map $\Spec{\Qp} \to \pi_1^\un(Z)$, we get a morphism $(\ZPi)_{\Qp} \to \Pi_{\Qp}$. The induced map on $\Qp$-points is denoted by $\mathrm{loc}_\Pi$.
Denoting the composition $X(Z) \xrightarrow{\kappa} H^1(\pi_1^{\MT}(Z), \Pi) = \ZPi(\Qb) \subseteq \ZPi(\Qp)$ by $\kappa$ as well, this fits into a diagram:

\[ 
\xymatrix{
X(Z) \ar@{^{(}->}[r]
\ar[d]_-{\kappa}
&
X(Z_{\pf})
\ar[d]^-{\kappa_{\pf}}
\\
\ZPi(\Qp) \ar[r]_-{\mathrm{loc}_\Pi}
&
\Pi(\Qp)
},
\]
which we call \emph{Kim's Cutter}.

This diagram is commutative (c.f. \cite{MTMUE}, 4.9). In Section \ref{sec:kappa_in_coords}, we will describe $\kappa$ and $\kappa_{\pf}$ explicitly in terms of coordinates.

\subsection{Chabauty-Kim Locus}\label{sec:CK_locus}

\begin{defn}\label{defn:restriction_to_padic_points}
Let $f \in \Oc(\Pi \times \pi_1^\un(Z))$. Now $\mathrm{per}_{\pf}$ induces a morphism $\Pi_{\Qp} \to \Pi \times \pi_1^\un(Z)$, and $f$ pulls back via this morphism to an element of the coordinate ring of $\Pi_{\Qp}$, hence a function $\Pi(\Qp) \xrightarrow{f_{\Qp}} \Qp$. The composition $$f_{\Qp} \circ \kappa_{\pf}$$ with $X(Z_{\pf}) \xrightarrow{\kappa_{\pf}} \Pi(\Qp)$ is a Coleman function on $X(Z_{\pf})$ and is denoted $f\restriction_{X(Z_{\pf})}$.
\end{defn}

\begin{defn}\label{defn:CK_locus}
We then define the \emph{Chabauty-Kim locus}
$$X(Z_{\pf})_{\Pi} = X(Z_{\pf})^Z_\Pi \colonequals \{z \in X(Z_{\pf}) \colon f\restriction_{X(Z_{\pf})}(z)=0 \hspace{0.5cm} \forall \,  f \in \Oc(\Pi \times \pi_1^\un(Z)) \cap \IzPi \}.
$$
\end{defn}

While this set depends on $Z$, we write $X(Z_{\pf})_{\Pi}$ when there is no confusion.

Any $f \in \Oc(\Pi \times \pi_1^\un(Z)) \cap \IzPi$ vanishes on the image of $\evalu_\Pi \colon \ZPi \times \pi_1^\un(Z) \to \Pi \times \pi_1^\un(Z)$, hence also on the image of $\mathrm{loc}_{\Pi}$. By the commutativity of Kim's Cutter, $f\restriction_{X(Z_{\pf})}$ vanishes on $X(Z)$, hence
\[
X(Z) \subset X(Z_{\pf})_\Pi.
\]

We note that if $\Pi'$ dominates $\Pi$ (i.e., we have $\pi_1^\un(X) \twoheadrightarrow \Pi' \xtwoheadrightarrow{p} \Pi$), then $p^\#(\IzPi) \subseteq \Ic^Z_{\Pi'}$, hence $X(Z_{\pf})_{\Pi'} \subseteq X(Z_{\pf})_\Pi$.

For $\Pi=\Pi_n$, we denote $X(Z_{\pf})_\Pi$ by $X(Z_{\pf})_n$. Kim (\cite{kim05,KimTangential}) showed that $X(Z_{\pf})_n$ is finite for sufficiently large $n$ when $K$ is totally real.

\begin{rem}\label{rem:kimvsus}
The locus originally defined by Kim and used in \cite[p.371]{nabsd}, which we denoted $X(Z_{\pf})_{n,\Kim}^Z$ in the introduction, differs slightly from our own in that it uses a version of $\IzPi$ defined as the ideal of functions vanishing on the image of the base change of $\evalu_\Pi$ along $\mathrm{per}_{\pf}$. That ideal could in principle be larger than our own, producing a possibly smaller locus. However, as explained in \cite[4.2.6]{MTMUEII}, Conjecture \ref{conj:period_conj} implies that these two are the same, so we expect the same results when doing it this way. Furthermore, our version of Kim's conjecture (Conjecture \ref{conj:kim}) is a priori stronger than the original version (Conjecture \ref{conj:kim1}) because
\[
X(Z_{\pf})_{n,\Kim}^Z \subseteq X(Z_{\pf})_n,
\]
so our theorems apply to his conjecture either way.
\end{rem}

\subsection{The Polylogarithmic Quotient}\label{sec:PL}

We let $N$ denote the kernel of the homomorphism $\pi_1^\un(X) \to \pi_1^\un(\Gm)$ induced by the inclusion $X \hookrightarrow \Gm$, where $\pi_1^\un(\Gm)$ refers to the unipotent de Rham fundamental group of $({\Gm})_{\Qb}$.

\begin{defn}

Following \cite{Deligne89}, we define the \emph{polylogarithmic quotient}
\[
\pi_1^\PL(X) \colonequals \pi_1^\un(X)/[N,N].
\]
\end{defn}

The group $\pi_1^{\MT}(Z)$ acts on $\pi_1^\un(\Gm)$ as well as $\pi_1^\un(X)$, and because $\pi_1^\un(X) \to \pi_1^\un(\Gm)$ is induced by a map of schemes over $\mathbb{Z}$, it is $\pi_1^{\MT}(Z)$-equivariant. Therefore, $N$ and hence $[N,N]$ are $\pi_1^{\MT}(Z)$-stable, so $\pi_1^\PL(X)$ has a structure of a motive, i.e., an action of $\pi_1^{\MT}(Z)$.

As a motive, it has the structure
\[
\pi_1^\PL(X) = \QQ(1) \ltimes \prod_{i=1}^\infty \QQ(i)
\]
(c.f.\cite[16]{Deligne89} and \cite[6]{DelGon05}) so in particular the action of $\pi_1^{\MT}(Z)$ factors through $\Gm$. 

\begin{rem}In Section \ref{fund_grp_coords}, we will describe $\pi_1^\PL(X)$ more explicitly in coordinates.
\end{rem}

In the specific case $\Pi = \Pi_{\PL,n} \colonequals \pi_1^\PL(X)_{\ge -n}$ for a positive integer $n$, we write $\ZPLn$, $\evalu_{\PL,n}$, $\loc_{\PL,n}$, $\Izn$, and $X(Z_{\pf})_{\PL,n}$ to denote $\ZPi$, $\evalu_\Pi$, $\loc_\Pi$, $\IzPi$, and $X(Z_{\pf})_\Pi$, respectively.

Since $\pi_1^\un(Z)$ acts trivially on $\pi_1^\PL(X)$, a $\Gm$-equivariant cocycle 
\[
c \colon \pi_1^\un(Z) \to \pi_1^\PL(X)
\]
is just a $\Gm$-equivariant homomorphism.

If $\Pi$ is a quotient of $\Pi_{\PL,n}$, then any $\Gm$-equivariant homomorphism $\pi_1^\un(Z) \to \Pi$ must be zero on $\pi_1^\un(Z)_{<-n}$ by the $\Gm$-equivariance, so $\ZPi$ is the same as $$Z^1(\pi_1^\un(Z)_{\ge -n}, \Pi)^{\Gm}.$$ It follows that we can in fact view $\evalu_{\PL,n}$ as a morphism $$\ZPLn \times \pi_1^\un(Z)_{\ge -n} \to \Pi_{\PL,n} \times \pi_1^\un(Z)_{\ge -n}$$ lying over the identity on $\pi_1^\un(Z)_{\ge -n}$. Letting $\Kn$ denote the function field of $\pi_1^\un(Z)_{\ge -n}$, it induces a map
$$
\evalu_{\PL,n}^{\Kn} \colon (\ZPLn)_{\Kn} \to (\Pi_{\PL,n})_{\Kn}
$$
of finite-dimensional affine spaces over the field $\Kn$, and we view $\Izn$ as an ideal in $\Oc((\Pi_{\PL,n})_{\Kn})$.

\subsection{Kim's Conjecture}\label{sec:conjectures}


Recall that $X(Z_{\pf})_n$ denotes the Chabauty-Kim locus associated to the quotient $\Pi_n = \pi_1^{\un}(X)_{\ge -n}$, and that by Remark \ref{rem:kimvsus}, it contains and is expected to be equal to $X(Z_{\pf})_{n,\Kim}^Z$. In the introduction, we stated Conjecture \ref{conj:kim1} from \cite{nabsd}, which uses $X(Z_{\pf})_{n,\Kim}^Z$, but we now state the variant of this conjecture that we will focus on:

\begin{conj}\label{conj:kim}
$X(Z) = X(Z_{\pf})_n$ for sufficiently large $n$.
\end{conj}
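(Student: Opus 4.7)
The containment $X(Z) \subseteq X(Z_{\pf})_n$ is automatic for every $n$ from the commutativity of Kim's cutter and Definition \ref{defn:CK_locus}: any $f$ in $\Oc(\Pi_n \times \pi_1^\un(Z)) \cap \Ic^Z_n$ vanishes on the image of $\evalu_{\Pi_n}$, so the Coleman function $f\restriction_{X(Z_\pf)}$ vanishes on $\kappa_{\pf}(X(Z))$, hence on $X(Z)$. The substantive content is the reverse containment for $n \gg 0$. A direct assault on the conjecture in full generality seems out of reach at present, so my plan is to proceed case by case for fixed $Z$, $\pf$, and $n$, using the framework of this paper to reduce the infinite-dimensional problem to finite-dimensional linear algebra over $\Kn$ and then to a finite $p$-adic root count.

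Concretely, for each specific $Z$ I would: first, carry out the geometric step of Section \ref{sec:the_geometric_step} by writing $\evalu_{\PL,n}^{\Kn}$ in the explicit coordinates of Proposition \ref{prop:brown} and applying elimination theory in the abstract basis $\{f_w\}$ of $A(Z)$ to produce generators of $\Izn$; second, inductively on the half-weight, express each $f_w \in A(Z)$ as a $\Qb$-linear combination of motivic polylogarithms $\Liu_n(z)$, motivic logarithms $\logu(z)$, and motivic zetas $\zetau(2n+1)$, using the reduced coproduct formula (\ref{eqn:motivic_polylog_coprod}) to peel off one letter at a time and calibrating the single remaining $\zetau(2n+1)$-coefficient numerically via $\mathrm{per}_\pf$; third, apply $\mathrm{per}_\pf$ to the coefficients of the generators to obtain explicit Coleman functions on $X(Z_\pf)$; and finally, compute the $p$-adic zeros of those Coleman functions and verify that each lies in $X(Z)$. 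For the case $Z = \Spec{\Zb[1/3]}$, $p = 5, 7$, $n = 4$, one expects the functions produced in the geometric step (already of the shape exhibited in the Introduction) to suffice.

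Two obstacles temper this plan. First, Goncharov's coproduct only pins down $f_w$ modulo primitives in each weight, so step two is unconditional only when one can replace the numerical rational-recognition step by a genuine motivic identity, as is done in the Appendix via the Kummer--Spence relation; in general one must invoke Conjecture \ref{conj:period_conj}. Second, by Theorem \ref{thm:counterexample1} the polylog locus $X(Z_\pf)_{\PL,n}$ can itself never equal $X(Z)$, so a polylogarithmic verification of Conjecture \ref{conj:kim} must instead proceed via the symmetrized Conjecture \ref{conj:polylog_kim_symm1}, intersecting the $S_3$-translates $\sigma(X(Z_\pf)_{\PL,n})$ and appealing to the implication \ref{conj:polylog_kim_symm1} $\Rightarrow$ \ref{conj:kim1} explained in Section \ref{sec:conjectures}. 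The hardest part, and the reason this can only be a case-by-case verification rather than a proof of the conjecture, is the absence of any uniform lower bound on the rank of $\Izn$ as $n \to \infty$ that dominates the $p$-adic root count of the associated Coleman system; closing this gap appears to require new input from the theory of motivic periods beyond what is currently available.
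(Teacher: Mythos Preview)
The statement is a \emph{conjecture}, not a theorem; the paper offers no proof of it in general, only the verification of the single case $Z=\Spec{\Zb[1/3]}$, $p=5,7$, $n=4$ (Theorem~\ref{thm:verification_computation}). You correctly recognize this and frame your proposal not as a proof but as a case-by-case verification strategy, which is exactly the paper's own program: the geometric step in the abstract shuffle basis (Proposition~\ref{prop:brown}), writing $f_w$ in terms of explicit motivic polylogarithms via the reduced coproduct with numerical calibration of the $\zetau$-coefficient, $p$-adic realization via $\mathrm{per}_\pf$, root-finding, and finally passing through the $S_3$-symmetrized locus and the implication of Proposition~\ref{prop:different_conjs}. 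Your account of the two obstacles (the need for genuine motivic identities versus numerical recognition, and the failure of the unsymmetrized polylog locus by Theorem~\ref{thm:counterexample}) is accurate and matches the paper's discussion.

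So there is nothing to correct: your proposal is a faithful summary of the paper's method and its limitations, and you are right that it does not and cannot constitute a proof of the conjecture as stated.
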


As explained in Remark \ref{rem:kimvsus}, this variant implies Conjecture \ref{conj:kim1}, and is implied by Conjecture \ref{conj:kim1} and Conjecture \ref{conj:period_conj} together. From now on, we focus entirely on Conjecture \ref{conj:kim}.

In fact, dimension counts show that $\Izn$ is nonzero; hence $X(Z_{\pf})_{\PL,n}$ is finite for sufficiently large $n$. To make the conjecture more computationally accessible, one would like an analogue of Conjecture \ref{conj:kim} for the polylogarithmic quotient. As explained in the introduction, one might therefore hope that $X(Z) = X(Z_{\pf})_{\PL,n}$ for sufficiently large $n$, which would be a strengthening of Conjecture \ref{conj:kim}. However, as we will show in Section \ref{sec:symmetrization}, this is not the case (at least for $Z = \Spec{\Zb[1/\ell]}$ and odd primes $\ell$). Nonetheless, we show how to get around this difficulty while still using only polylogarithms, by using a certain $S_3$-symmetrization.

More specifically, there is an action of $S_3$ on the scheme $X$. We then write
$$
X(Z_{\pf})_{\PL,n}^{S_3} \colonequals \bigcap_{\sigma \in S_3} \sigma(X(Z_{\pf})_{\PL,n}).$$

Our symmetrized conjecture is that
\begin{conj}\label{conj:polylog_kim_symm}
$X(Z) = X(Z_{\pf})_{\PL,n}^{S_3}$ for sufficiently large $n$.
\end{conj}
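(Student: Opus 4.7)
The plan is to establish the two inclusions $X(Z) \subseteq X(Z_{\pf})_{\PL,n}^{S_3}$ and $X(Z_{\pf})_{\PL,n}^{S_3} \subseteq X(Z)$ separately, with only the second requiring that $n$ be sufficiently large. For the forward inclusion, first observe that the $S_3$-action on $X = \Poneminusthreepoints$ permuting the three cusps is defined over $\mathbb{Z}$, so $X(Z)$ is $S_3$-stable. Given $z \in X(Z)$ and any $\sigma \in S_3$, the point $\sigma^{-1}(z)$ again lies in $X(Z)$ and therefore in $X(Z_{\pf})_{\PL,n}$ by the standard containment of $X(Z)$ in the polylogarithmic Chabauty-Kim locus (Section \ref{sec:CK_locus}). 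Hence $z \in \sigma(X(Z_{\pf})_{\PL,n})$, and intersecting over $\sigma \in S_3$ yields $z \in X(Z_{\pf})_{\PL,n}^{S_3}$. This argument is valid for all $n \ge 1$ and uses nothing beyond the $S_3$-equivariance of Kim's cutter.

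For the reverse inclusion, my strategy is to reduce to the unsymmetrized Kim conjecture (Conjecture \ref{conj:kim}) applied to the full quotients $\Pi_m = \pi_1^\un(X)_{\ge -m}$. Namely, I would try to produce, for each $n$, an integer $m(n)$ with $m(n) \to \infty$ as $n \to \infty$ and an inclusion
\[
X(Z_{\pf})_{\PL,n}^{S_3} \subseteq X(Z_{\pf})_{m(n)}.
\]
The motivation is that $\pi_1^\PL(X)$ is obtained by killing $[N,N]$ for $N = \ker(\pi_1^\un(X) \to \pi_1^\un(\Gm))$, and this kernel depends on the distinguished puncture. Under the $S_3$-action one obtains two further kernels $N', N''$ attached to the other two cusps; the intersection $[N,N] \cap [N',N'] \cap [N'',N'']$ should sit deep inside the lower central series of $\nf(X)$, so the joint data of the three $S_3$-conjugate polylogarithmic quotients encodes a nontrivial finite-level piece of the full motivic fundamental group. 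Translating this Lie-algebraic statement into a corresponding inclusion of Chabauty-Kim ideals, and then invoking Conjecture \ref{conj:kim}, would finish the argument.

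The principal obstacle is making this joint-generation step precise, both at the level of $\Gm$-equivariant cocycles and at the level of their common vanishing loci. Concretely, one must show that if a point $z \in X(Z_{\pf})$ lies in $\sigma(X(Z_{\pf})_{\PL,n})$ for every $\sigma \in S_3$, then the three associated polylogarithmic cocycle-evaluations at $z$ can be assembled into a single $\Gm$-equivariant cocycle valued in $\Pi_{m(n)}$; this is a descent-type problem that may require additional input, such as the $p$-adic period conjecture (Conjecture \ref{conj:period_conj}) or an explicit lifting statement for motivic cocycles beyond the polylog quotient. In the absence of such a general mechanism, my fallback is a case-by-case verification in the spirit of Theorem \ref{thm:verification_computation1}: compute the Coleman functions arising from the geometric step applied to $\Pi_{\PL,n}$ and its $S_3$-conjugates, and check by direct $p$-adic evaluation at the finitely many candidate points (building on those already located in \cite{nabsd}) that the symmetrized locus coincides with $X(Z)$ for the specific $Z$, $\pf$, and $n$ under consideration.
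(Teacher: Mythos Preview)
The statement is a \emph{conjecture}; the paper does not prove it, and there is no proof to compare against. Your proposal correctly handles the forward inclusion $X(Z) \subseteq X(Z_{\pf})_{\PL,n}^{S_3}$ (this is the easy direction, implicit in Section \ref{sec:CK_locus} together with the $S_3$-stability of $X(Z)$), but the reverse inclusion is genuinely open, and your own write-up essentially acknowledges this.

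Two specific points. First, the paper establishes the implication in the \emph{opposite} direction between the conjectures: Proposition \ref{prop:different_conjs} shows that Conjecture \ref{conj:polylog_kim_symm} implies Conjecture \ref{conj:kim}, via the inclusion $X(Z_{\pf})_n \subseteq X(Z_{\pf})_{\PL,n}^{S_3}$. Your strategy attempts to go the other way, producing an inclusion $X(Z_{\pf})_{\PL,n}^{S_3} \subseteq X(Z_{\pf})_{m(n)}$ and then invoking Conjecture \ref{conj:kim}. That would be a new (and interesting) result, but it is not in the paper and is itself an open problem: the Lie-algebraic heuristic about $[N,N] \cap [N',N'] \cap [N'',N'']$ lying deep in the lower central series does not translate directly into an inclusion of Chabauty-Kim loci, because the loci are defined by ideals of functions, not by the underlying group quotients alone. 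Second, your fallback of case-by-case verification is exactly what the paper does (Theorem \ref{thm:verification_computation}), but that verifies specific instances, not the general conjecture.

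In short: there is no gap in your reasoning so much as a category error --- you have outlined a research program for attacking an open conjecture, not a proof of a result the paper claims.
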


To relate this to Conjecture \ref{conj:kim}, we prove:

\begin{prop}\label{prop:different_conjs}
Conjecture \ref{conj:polylog_kim_symm} implies Conjecture \ref{conj:kim} (and hence by Remark \ref{rem:kimvsus}, Conjecture \ref{conj:kim1} as well).
\end{prop}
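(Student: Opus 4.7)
The plan is to establish the chain of inclusions
\[
X(Z) \;\subseteq\; X(Z_{\pf})_n \;\subseteq\; X(Z_{\pf})_{\PL,n}^{S_3}
\]
for every $n \ge 1$. Since Conjecture \ref{conj:polylog_kim_symm} asserts that the two outer sets coincide for sufficiently large $n$, such a sandwich forces equality throughout and immediately yields Conjecture \ref{conj:kim}. The left inclusion is the standard one noted just after Definition \ref{defn:CK_locus}, so the entire content of the proposition lies in the right inclusion.

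For the right inclusion I would combine two ingredients. First, $\Pi_n = \pi_1^\un(X)_{\ge -n}$ dominates $\Pi_{\PL,n} = \pi_1^\PL(X)_{\ge -n}$: indeed, $\pi_1^\PL(X)$ is constructed in Section \ref{sec:PL} precisely as a $\pi_1^{\MT}(Z)$-equivariant quotient of $\pi_1^\un(X)$, and passing to $\ge -n$ preserves this. The functoriality remark following Definition \ref{defn:CK_locus} then gives
\[
X(Z_{\pf})_n \subseteq X(Z_{\pf})_{\PL,n}.
\]
Second, I would argue that $X(Z_{\pf})_n$ is $S_3$-stable inside $X(Z_{\pf})$. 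For each $\sigma \in S_3$, the automorphism $\sigma\colon X\to X$ defined over $\mathbb{Z}$ is functorial on the entire Chabauty-Kim construction: it carries the cocycle scheme, the universal evaluation map, the Chabauty-Kim ideal, and the local Kummer map built from the basepoint $\vec{1}_0$ isomorphically onto the corresponding data built from the basepoint $\sigma(\vec{1}_0)$. Consequently it identifies the Chabauty-Kim locus computed at $\vec{1}_0$ with the Chabauty-Kim locus computed at $\sigma(\vec{1}_0)$, and Remark \ref{rem:independence_of_basepoint} tells us that both coincide with $X(Z_{\pf})_n$ as subsets of $X(Z_{\pf})$. Hence $\sigma\bigl(X(Z_{\pf})_n\bigr) = X(Z_{\pf})_n$.

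Putting the two ingredients together, for every $\sigma \in S_3$ we obtain
\[
X(Z_{\pf})_n \;=\; \sigma\bigl(X(Z_{\pf})_n\bigr) \;\subseteq\; \sigma\bigl(X(Z_{\pf})_{\PL,n}\bigr),
\]
and intersecting over $\sigma$ gives $X(Z_{\pf})_n \subseteq X(Z_{\pf})_{\PL,n}^{S_3}$, completing the sandwich.

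The only genuine subtlety, and hence the step I would treat with the most care, is the $S_3$-invariance of $X(Z_{\pf})_n$: the tangential basepoint $\vec{1}_0$ lives at the cusp $0$, which $S_3$ permutes with $1$ and $\infty$, so $\sigma$ does \emph{not} fix $\vec{1}_0$ and one cannot literally pretend $S_3$ acts on $\pi_1^\un(X,\vec{1}_0)$. Remark \ref{rem:independence_of_basepoint} is exactly the bridge that lets one change basepoint freely, and once it is invoked the argument reduces to a formal diagram chase through the naturality of the cocycle scheme $\ZPin$, the evaluation map $\evalu_{\Pi_n}$, the ideal $\Ic^Z_n$, and the maps $\kappa_{\pf}$ and $\mathrm{per}_{\pf}$.
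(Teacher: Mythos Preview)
Your proposal is correct and follows essentially the same approach as the paper: the paper's proof also establishes $\sigma(X(Z_{\pf})_n)=X(Z_{\pf})_n$ via the $\pi_1^{\MT}(Z)$-equivariant isomorphism of fundamental groups at $\vec{1}_0$ and $\sigma(\vec{1}_0)$ together with basepoint independence (Remark \ref{rem:independence_of_basepoint}), and then combines this with $X(Z_{\pf})_n \subseteq X(Z_{\pf})_{\PL,n}$ to conclude $X(Z_{\pf})_n \subseteq X(Z_{\pf})_{\PL,n}^{S_3}$. Your version is somewhat more explicit about the sandwich structure and the basepoint subtlety, but the argument is the same.
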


\begin{proof}
The $S_3$-action on $X$ gives a $\pi_1^{\MT}(Z)$-equivariant isomorphism from the de Rham fundamental group with basepoint $\vec{1}_0$ to the de Rham fundamental group with basepoint $\sigma(\vec{1}_0)$, which descends to an isomorphism on $\pi_1^{\un}(X)_{\ge -n}$. It follows by independence of basepoint (Remark \ref{rem:independence_of_basepoint}) and functoriality of all the constructions that $\sigma(X(Z_{\pf})_n)=X(Z_{\pf})_n$, so by $X(Z_{\pf})_n \subseteq X(Z_{\pf})_{\PL,n}$, we have
\[
X(Z_{\pf})_n \subseteq X(Z_{\pf})_{\PL,n}^{S_3},
\]
from which the result follows.
\end{proof}

In Theorem \ref{thm:verification_computation}, we use our computations to verify Conjecture \ref{conj:polylog_kim_symm} for $Z=\Spec{\Zb[1/3]}$ and $p=5,7$.

\section{Coordinates}

\subsection{Coordinates on the Fundamental Group}\label{fund_grp_coords}

Let $\{e_0,e_1\}$ be a basis of $H_1^{\mathrm{dR}}(X_{\Qb})$ dual to the basis $\left\{\frac{dz}{z},\frac{dz}{1-z}\right\}$ of $H^1_{\mathrm{dR}}(X_{\Qb})$. The algebra $\Uc \pi_1^\un(X)$ is the free (completed) non-commutative algebra on the generators $e_0$ and $e_1$, with coproduct given by declaring that $e_0$ and $e_1$ are primitive and grading given by putting both in degree $-1$. We refer to the words
\[
e_0, e_1, e_1e_0, e_1 e_0 e_0, \dots
\]
as the \emph{polylogarithmic words}. We let the elements $$\logu, \Liu_1, \Liu_2, \dots \in \Oo(\pi_1^\un(X))$$ be the duals of these words with respect to the standard basis of $\Uc \pi_1^\un(X)$. More generally, for a word $w$ in $e_0,e_1$, we let $\Liu_{w}$ denote the dual basis element of $\Oc(\pi_1^\un(X))$.

\begin{prop}\label{prop:polylog_coprod}

We have
\[
\Delta'\logu = 0
\]
\[
\Delta'\Liu_n = 
 \sum_{i=1}^{n-1}  \Liu_{n-i} \otimes \frac{(\logu)^{\Sha i}}{i!}.
\]

\end{prop}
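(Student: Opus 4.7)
The strategy is to combine the general identification of $\Oc(\pi_1^\un(X))$ as a free shuffle algebra with a direct deconcatenation computation.

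First, I would set up coordinates. By the identification made in Section \ref{fund_grp_coords}, $\Uc \pi_1^\un(X)$ is the tensor algebra on the strictly negatively graded vector space $V = H_1^{\mathrm{dR}}(X_\Qb)$ with basis $\{e_0, e_1\}$, so $\pi_1^\un(X) = U(V)$ is the free pro-unipotent group on $V$ in the sense of Definition \ref{defn:free_prounip}. Hence, by the general description in Section \ref{sec:free_prounip}, $\Oc(\pi_1^\un(X))$ is the free shuffle algebra on $V^\vee$, whose basis $\{\Liu_w\}$ (indexed by words $w$ in $e_0,e_1$) is dual to the word basis of $\Uc \pi_1^\un(X)$, with coproduct given by deconcatenation and product given by the shuffle product.

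The first assertion $\Delta'\logu = 0$ is then immediate: $\logu = \Liu_{e_0}$ is a single-letter word, so it is primitive by Remark \ref{rem:single_letter_prim}.

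For the second assertion, I would apply the deconcatenation formula directly to $\Liu_n = \Liu_{e_1 e_0^{n-1}}$. The splittings $w_1 w_2 = e_1 e_0^{n-1}$ with both $w_1,w_2$ nonempty are exactly $w_1 = e_1 e_0^{k-1}$, $w_2 = e_0^{n-k}$ for $k = 1, \dots, n-1$, giving
\[
\Delta'\Liu_n = \sum_{k=1}^{n-1} \Liu_k \otimes \Liu_{e_0^{n-k}}.
\]
After reindexing by $i = n-k$, the only remaining task is to identify $\Liu_{e_0^i}$ with $(\logu)^{\Sha i}/i!$. This is a short computation from the explicit shuffle product formula: every shuffle of $i$ copies of the one-letter word $e_0$ produces the same word $e_0^i$, and there are $|\Sha(1,1,\dots,1)| = i!$ such shuffle permutations, so by an easy induction $(\logu)^{\Sha i} = i!\, \Liu_{e_0^i}$.

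There is no real obstacle here; the proof is essentially bookkeeping once one has (i) the identification of $\Oc(\pi_1^\un(X))$ as a shuffle Hopf algebra and (ii) the correct convention (the lexical one) for writing $\Liu_n = \Liu_{e_1 e_0^{n-1}}$, which places $e_0^{n-1}$ on the right and is exactly what makes the $\Liu_{n-i}$ appear on the \emph{left} tensor factor of $\Delta'$ (rather than the right, as in the opposite convention discussed at the end of Section 2.1.4).
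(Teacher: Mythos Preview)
Your proposal is correct and follows essentially the same approach as the paper: both invoke Remark \ref{rem:single_letter_prim} for $\Delta'\logu = 0$, then apply the deconcatenation formula to $\Liu_{e_1 e_0^{n-1}}$ and use the shuffle identity $(\logu)^{\Sha i} = i!\,\Liu_{e_0^i}$ to finish.
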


\begin{proof}
The first equation follows from Remark \ref{rem:single_letter_prim}.

By the discussion following Definition \ref{defn:free_prounip}, we have the formula
\[
\Delta'\Liu_n = \Delta'\Liu_{e_1 e_0 \cdots e_0} = \Liu_{e_1} \otimes \Liu_{e_0 \cdots e_0} + \Liu_{e_1 e_0} \otimes \Liu_{e_0 \cdots e_0} + \cdots + \Liu_{e_1 e_0 \cdots e_0} \otimes \Liu_{e_0}.
\]

By the definition of the shuffle product, we have the formula
\[
(\logu)^{\sha i} = (\Liu_{e_0})^{\sha i} = i! \Liu_{(e_0)^i}.
\]
The previous two formulas together with the definition of $\Liu_n$ then imply the proposition.

\end{proof}

Letting $\PPL$ be the subalgebra generated by $\logu, \Liu_1, \Liu_2, \dots$, the formula above implies that $\PPL$ is a Hopf subalgebra. It therefore corresponds to a quotient group of $\pi_1^\un(X)$. It follows from \cite[Proposition 7.1.3]{MTMUE} that this quotient group is the group $\pi_1^\PL(X)$.

Furthermore, we have $$\Oo(\Pi_{\PL,n}) = \QQ[\logu, \Liu_1, \dots, \Liu_n]$$ as a Hopf subalgebra of $\PPL = \Oo(\pi_1^\PL(X))$.

Given a cocycle $c \in \ZPL$, we write $c^\sharp \colon \PPL \to A(Z)$ for the associated homomorphism of $\Qb$-algebras, and we write
\[
\logu(c) \colonequals c^\sharp \logu,
\]
\[
\Liu_n(c) \colonequals c^\sharp \Liu_n.
\]

\begin{cor}\label{cor:motivic_polylog_coprod}
For $c \in \ZPL(\Qb)$, we have
\[
\Delta'\logu(c) = 0
\]
\[
\Delta'\Liu_n(c) = 
 \sum_{i=1}^{n-1}  \Liu_{n-i}(c) \otimes \frac{(\logu(c))^{\Sha i}}{i!}.
\]
\end{cor}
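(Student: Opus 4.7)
The plan is to observe that the corollary follows essentially formally from Proposition \ref{prop:polylog_coprod} by applying the homomorphism $c^\sharp$ to both sides. The key fact, already noted in the discussion preceding Definition \ref{defn:cocycles} applied to the polylog quotient, is that because $\pi_1^\un(Z)$ acts trivially on $\pi_1^\PL(X)$, a $\Gm$-equivariant cocycle $c \colon \pi_1^\un(Z) \to \pi_1^\PL(X)$ is just a $\Gm$-equivariant homomorphism of pro-unipotent groups. Therefore the induced map $c^\sharp \colon \PPL \to A(Z)$ is a homomorphism of (graded) Hopf algebras; in particular, it is both an algebra map (so preserves the shuffle product) and a coalgebra map (so commutes with the coproduct $\Delta$, and hence, since it also preserves the unit and counit, with the reduced coproduct $\Delta'$).

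Given this, I would first write $(c^\sharp \otimes c^\sharp) \circ \Delta' = \Delta' \circ c^\sharp$ and apply this identity to the elements $\logu$ and $\Liu_n$ in $\PPL$. For $\logu$, Proposition \ref{prop:polylog_coprod} gives $\Delta'\logu = 0$, so $\Delta'(c^\sharp \logu) = \Delta' \logu(c) = 0$. For $\Liu_n$, Proposition \ref{prop:polylog_coprod} gives
\[
\Delta' \Liu_n = \sum_{i=1}^{n-1} \Liu_{n-i} \otimes \frac{(\logu)^{\Sha i}}{i!},
\]
and applying $c^\sharp \otimes c^\sharp$ to both sides yields
\[
\Delta'\Liu_n(c) = \sum_{i=1}^{n-1} c^\sharp(\Liu_{n-i}) \otimes c^\sharp\!\left(\frac{(\logu)^{\Sha i}}{i!}\right) = \sum_{i=1}^{n-1} \Liu_{n-i}(c) \otimes \frac{(\logu(c))^{\Sha i}}{i!},
\]
where the last equality uses that $c^\sharp$ is an algebra homomorphism with respect to the shuffle product on both sides (the product on $A(Z)$ that governs powers like $(\logu(c))^{\Sha i}$ is, under our Hopf algebra conventions, precisely the shuffle product).

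There is no real obstacle here; the only point requiring care is justifying that $c^\sharp$ respects the Hopf algebra structure and not merely the algebra structure. This is exactly where the remark that $c$ is a genuine group homomorphism (rather than a cocycle for a nontrivial action) is used: a $\Gm$-equivariant morphism of schemes $\pi_1^\un(Z) \to \pi_1^\PL(X)$ satisfying the cocycle identity with trivial coefficients is a group homomorphism, and dualizing a group homomorphism of (pro-)affine group schemes gives a Hopf algebra homomorphism on coordinate rings. Once that is in hand, the corollary is immediate.
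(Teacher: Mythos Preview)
Your proposal is correct and follows essentially the same approach as the paper: observe that, since the $\pi_1^\un(Z)$-action on $\pi_1^\PL(X)$ is trivial, the cocycle $c$ is a group homomorphism, so $c^\sharp$ respects the coproduct, and then apply Proposition~\ref{prop:polylog_coprod}. Your write-up is slightly more detailed in spelling out why $c^\sharp$ preserves the shuffle powers on the right-hand side, but the argument is the same.
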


\begin{proof}
The cocycle condition reduces to the homomorphism conditions (as we are restricting ourselves to the polylogarithmic quotient), which means, dually, that $c^\sharp$ respects the coproduct. The corollary then follows immediately from Proposition \ref{prop:polylog_coprod}.
\end{proof}

\subsection{Generating \texorpdfstring{$A(Z)$}{A(Z)}}\label{sec:gen_az}

We take a moment to discuss coordinates for the Galois group $\pi_1^\un(Z)$. On an abstract level, this is a free unipotent group, and its structure is governed by the theory of Section \ref{sec:prounip}. However, we need to be able to write elements of $A(Z)$ in a way that allows us to compute their $p$-adic periods. This essentially means writing them as explicit combinations of special values of polylogarithms and zeta functions.

We introduce the notation
\[
\logu(z) \colonequals \logu(\kappa(z)) \in A(Z)_1
\]
\[
\Liu_n(z) \colonequals \Liu_n(\kappa(z)) \in A(Z)_n
\]
for $z \in X(Z)$. It is the same as the motivic period $\Liu_n(z)$ mentioned in \cite[(9.1)]{BrownIntegral} and \cite[2.2.4]{MTMUEII}, which justifies the notation $\Liu_n(c)$ in the previous section.

Because $\logu$ is pulled back from $\Gm$, we in fact have $\logu(z) \in A(Z)$ whenever $z \in \Gm(Z)$.

We also note:
\begin{fact}\label{fact:functional_equations}
For $z,w \in X(Z)$, we have
\[
\logu(zw) = \logu(z) + \logu(w)
\]
and
\[
\Liu_1(z) = -\logu(1-z).
\]
\end{fact}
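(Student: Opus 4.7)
The plan is to deduce both identities from functoriality of the motivic Kummer map $\kappa$, together with the identification $\pi_1^\un(\Gm) \cong \Gb_a$ under which $\logu$ corresponds to the coordinate function.

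For the first identity, I would consider the multiplication morphism $m \colon \Gm \times \Gm \to \Gm$. Since $\pi_1^\un$ is compatible with products and $\pi_1^\un(\Gm) \cong \Gb_a$ is abelian, the induced map $m_{*} \colon \Gb_a \times \Gb_a \to \Gb_a$ is a group scheme homomorphism restricting to the identity on each factor (by composition with the unit sections), hence must equal addition. Functoriality of $\kappa$ then gives $\kappa_{\Gm}(zw) = \kappa_{\Gm}(z) + \kappa_{\Gm}(w)$ in $\Gb_a(A(Z)) = A(Z)$, and applying the coordinate function $\logu$ yields $\logu(zw) = \logu(z) + \logu(w)$, using that $\logu$ extends to $\Gm(Z)$ as noted just before the statement of the fact.

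For the second identity, I would apply functoriality of $\kappa$ to the morphism $\psi \colon X \to \Gm$, $z \mapsto 1-z$, which yields $\logu(1-z) = \kappa(z)^{\#}\bigl(\psi^{\#}\logu\bigr)$ and reduces the problem to the identification of $\psi^{\#}\logu \in \Oc(\pi_1^\un(X))$. Since $\psi^{*}(du/u) = -dz/(1-z)$, the induced map on $H_1^{\mathrm{dR}}$ sends $e_0 \mapsto 0$ and $e_1 \mapsto -f$, where $f$ generates $H_1^{\mathrm{dR}}(\Gm)$ dually to $du/u$. Extending $\psi_{*}$ to the completed tensor algebra $\Uc\pi_1^\un(X)$ and pairing with the basis $\{\Liu_w\}$ of $\Oc(\pi_1^\un(X))$, one sees that $\psi^{\#}\logu$ pairs to $-1$ with the word $e_1$ and to $0$ with every other basis word, since $\psi_{*}(e_1^k) = (-1)^k f^k$ pairs trivially with $\logu$ for $k \neq 1$, and any word containing some $e_0$ maps to zero. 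Hence $\psi^{\#}\logu = -\Liu_1$, giving $\Liu_1(z) = -\logu(1-z)$.

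The main subtlety I expect is keeping signs and the direction of duality straight: the sign in $\psi^{*}(du/u) = -dz/(1-z)$ is what produces the minus sign in the conclusion, and one must verify the pullback on \emph{all} words in $e_0, e_1$ rather than only on the generators, in order to rule out contributions from higher powers of $e_1$. There is also a minor matter of basepoint bookkeeping for $\psi$ (which does not preserve the standard tangential basepoint), but this is harmless because of the canonicity of the de Rham path and the fact that $\pi_1^\un(\Gm)$ is abelian. Beyond these bookkeeping points, both identities are essentially formal consequences of functoriality.
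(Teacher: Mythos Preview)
Your proof plan is correct. The paper itself does not give a proof of this statement: it is recorded as a ``Fact'' and taken as standard, so there is no argument in the paper to compare yours against. Your approach via functoriality of the Kummer map, the identification $\pi_1^\un(\Gm)\cong\Gb_a$, and the explicit computation of $\psi^\#\logu$ from $\psi^*(du/u)=-dz/(1-z)$ is exactly the natural way to justify the fact, and the basepoint caveat you flag is indeed harmless here for the reasons you state.
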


Letting $c_1$ denote the cocycle corresponding to the class of the path torsor $_{\vec{-1}_1} P_{\vec{1}_0}$ in $H^1(\pi_1^{\MT}(Z), \pi_1^\un(X))$, we write
\[
\zetau(n) \colonequals \Liu_n(c_1) \in A(Z)_n.
\]

It is not clear a priori that the special values $\Liu_n(z)$ for $z \in X(Z)$ span the space $A(Z)$ (whether we leave $Z$ fixed or take a union over various $Z$, such as all $Z$ with a fixed function field $K$, or even all $Z$ whose function field is cyclotomic). However, in the case $K=\Qb$, there is the following conjecture of Goncharov:

\begin{conj}[\cite{GonMPMTM}, Conjecture 7.4]\label{conj:gonch_exhaust}
The ring $A(\Qb)$ is spanned by elements of the form $\Liu_w(z)$ for $z$ a rational point or rational tangential basepoint of $\Poneminusthreepoints$ and $w$ a word in $e_0,e_1$.
\end{conj}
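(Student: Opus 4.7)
The statement is an open conjecture of Goncharov, so any honest ``proof proposal'' is really a description of a natural attack and of why the attack stalls. The plan is to proceed by induction on the half-weight $n$, exploiting the Hopf structure on $A(\Qb)$ together with Goncharov's coproduct formula to reduce spanning statements in degree $n$ to spanning statements in lower degrees plus spanning statements on the primitives.

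First, for the base case $n=1$ one uses Fact \ref{fact:functional_equations}: for any prime $\ell$, the element $\logu(\ell) = -\Liu_1(1-\ell)$ lies in $A(\Spec\Zb[1/\ell])_1$, and as $\ell$ varies these exhaust the abelianization $A(\Qb)_1 = \bigoplus_\ell K_1(\Spec\Zb[1/\ell])_\Qb$. For the inductive step, let $V_n \subseteq A(\Qb)_n$ denote the $\Qb$-span of all motivic iterated integrals $\Liu_w(z)$ of half-weight $n$, with $z$ a rational or tangential basepoint of $X$. We want to show $V_n = A(\Qb)_n$. Consider the short exact sequence (Proposition \ref{prop:exact_sequence} of the paper)
\[
0 \to \mathrm{Prim}\,A(\Qb)_n \to A(\Qb)_n \xrightarrow{\Delta'} \bigoplus_{i+j=n,\, i,j>0} A(\Qb)_i \otimes A(\Qb)_j,
\]
and observe that by Goncharov's formula, $\Delta'$ sends $V_n$ into $\bigoplus V_i \otimes V_j$, which by induction equals the whole right-hand target. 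So the problem splits into two pieces: (a) showing $\Delta'(V_n) = \Delta'(A(\Qb)_n)$, and (b) showing that all primitives of degree $n$ lie in $V_n$.

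For (b), Borel's theorem gives that $\mathrm{Prim}\,A(\Qb)_n$ is zero in even degrees $\ge 2$, while in odd degrees $2m+1 \ge 3$ it is one-dimensional and spanned by $\zetau(2m+1) = \Liu_{2m+1}(c_1)$, which is by construction a motivic polylogarithm at a tangential basepoint. So (b) is essentially a bookkeeping exercise once one quotes Borel and the definition of $\zetau$. For (a), one would want to produce, given any target $T \in \bigoplus_{i+j=n} V_i \otimes V_j$ satisfying the necessary coassociativity constraint to be in the image of $\Delta'$, an explicit linear combination of $\Liu_w(z)$'s whose reduced coproduct is $T$; this is where the functional equations among polylogarithms (distribution relations, inversion, the five-term relation and its higher analogs) must be strong enough to realize every coproduct pattern.

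The main obstacle, and the reason this remains a conjecture, is precisely step (a) of the inductive step. Controlling the image of the ``polylogarithm subspace'' under $\Delta'$ forces one to understand all relations among multiple polylogarithm values at rational arguments — the depth filtration, the motivic Zagier conjecture on cyclotomic polylogarithms, and exotic relations such as those giving rise to $\zeta(3,5)$ in weight $8$ all intrude. In practice, the most promising partial attacks go via depth: one first proves the analogous statement for the depth-graded quotient (where Goncharov's dihedral symmetries and Brown's results on motivic MZV's give strong control), and then lifts by an auxiliary induction on depth within a fixed weight. The tangential basepoints are essential here, because only after allowing $z \to 0$ and $z \to \infty$ as limits does one obtain the multiple zeta values needed to reach all primitives and all coproduct patterns. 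Even with this refinement, however, the lifting from the depth-graded piece to $A(\Qb)$ itself remains open, and this is where I would expect a rigorous proof attempt to halt.
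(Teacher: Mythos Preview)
The statement is labeled as a conjecture in the paper, and the paper offers no proof of it; it is quoted from Goncharov's work and used only as motivation for the heuristic that one may need to enlarge $Z$ to find enough polylogarithmic generators. You correctly recognized this and framed your answer as an outline of a natural attack together with a clear identification of where the argument stalls. There is therefore nothing in the paper to compare your proposal against, and your honesty about the open status is the right response.

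A couple of small corrections to your sketch are worth flagging. In the base case, the description $A(\Qb)_1 = \bigoplus_\ell K_1(\Spec\Zb[1/\ell])_\Qb$ is not literally correct: $A(\Qb)_1 \cong \Qb^\times \otimes_{\Zb} \Qb$, which is indeed spanned by the $\logu(\ell)$, but the passage through the individual $K_1(\Spec\Zb[1/\ell])_\Qb$ is not a direct sum decomposition in the way you wrote. More substantively, in step (a) you say that by induction $\bigoplus_{i+j=n} V_i \otimes V_j$ equals the whole target of $\Delta'$; this is true, but the actual obstruction is not merely that $\Delta'(V_n)$ should surject onto $\operatorname{im}(\Delta')$ --- it is that one needs preimages inside $V_n$ of every element of $\operatorname{im}(\Delta')$, which is a statement about the \emph{subspace} $V_n$ rather than about the target. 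Your later paragraph acknowledges exactly this difficulty, so the logical gap is only in the phrasing of the reduction, not in your overall diagnosis.
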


For our purposes, it is better to have a version of this conjecture with some control on ramification. The papers \cite{MTMUE} and \cite{MTMUEII} observed that one might need to replace $Z$ by an open subscheme $Z'$ and consider elements of the form $\Liu_w(z)$ for $z \in X(Z')$, even to generate $A(Z)$. More specifically, \cite[3.5]{MTMUE}, proposed an ``Archimedean condition,'' which is equivalent to the following definition:
\begin{defn}
We say that an integral scheme $Z$ with function field $\Qb$ is \emph{tapered} if there exists $N \in \Zb_{>0} \cup \{\infty\}$ such that $Z \cong \Spec{\Zb[1/S]}$ for $S=\{p \hspace{0.2cm} \mathrm{prime}; \, p \le N\}$.
\end{defn}

\cite[3.5]{MTMUE} then expressed the following as a ``Naive Hope,'' a phrase we retain:

\begin{naivehope}
If $Z$ is tapered, then the ring $A(Z)$ is spanned by elements of the form $\Liu_{w}(z)$ for $z$ a $Z$-integral point or tangential basepoint of $\Poneminusthreepoints$.
\end{naivehope}

This is known in the cases $N=1$ (\cite{BrownMTZ}) and $N=2$ (\cite{DelMuN}), and the case $N=\infty$ is Conjecture \ref{conj:gonch_exhaust}. Furthermore, \cite[3.6]{MTMUE} proves it in degrees $\le 2$. Finally, \cite[Conjecture 2.2.7]{MTMUEII} states a weaker version of this as a conjecture.

This history is important because the current paper is the first to put the preceding hopes and conjectures into practice. In our work, we do not need to make a precise conjecture; we will take all of this as expressing a principle that one may need to consider $z \in X(Z')$ for $Z'$ an open subscheme of $Z$. In Section \ref{sec:galois_coord_ex}, we will demonstrate this in practice by writing elements of $A(\Zb[1/3])$ in terms of values of polylogarithms at elements of $\Poneminusthreepoints(\Zb[1/6])$.

\begin{rem}\label{rem:gonch}
In doing so, we will find that all of these may be written in terms of single (rather than multiple) polylogarithms. This is in line with a conjecture of Goncharov about the depth filtration, as we now describe. We let
\[
\nf^G(Z) := \nf(Z)/[\nf(Z)_{<-1},[\nf(Z)_{<-1},\nf(Z)_{<-1}]],
\]
$\pi_1^{G}(Z)$ the corresponding quotient group, and $A^G(Z)$ its coordinate ring. It follows by the definition of $\pi_1^{\PL}(X)$ that
\[
\ZPLn = Z^1(\pi_1^{G}(Z)_{\ge n},\Pi_{\PL,n})^{\Gm}.
\]
In particular, each $\Liu_n(z)$ is in $A^G(Z)$, and Goncharov's conjecture (\cite[Conjecture 7.6]{GonMPMTM}) implies that $A^G(Z)$ is spanned by elements of the form $\Liu_n(z)$ when $Z=\Spec{\Qb}$.
Whenever $K$ is totally real, and $Z$ is missing at most two primes, $\nf^G(Z)$ agrees with $\nf(Z)$ in degrees $\ge -4$; hence it is reasonable to look only among single polylogarithms in the examples in this paper (but see Remark \ref{rem:gonch_ram}).
\end{rem}

\begin{rem}\label{rem:gonch_ram}
Formulating a precise conjecture that involves \emph{both} restriction to an open subscheme $Z' \subseteq Z$ \emph{and} restriction to the Goncharov quotient $\nf^G(Z)$ requires a lot of care; for we expect $\nf^G(Z)_n$ to grow with $n$, which by Siegel's Theorem implies that $A^G(Z)$ cannot be generated by single polylogarithms $\Liu_n(z)$ for $z \in X(Z)$, regardless of $Z$. Therefore, we may view Remark \ref{rem:gonch} as a heuristic for now, and our computations in \ref{sec:choosing_p_3_1/6} justify themselves. \cite[Conjecture 3.6]{PolGonII} gives a conjecture that takes both into account, but only in bounded weight.
\end{rem}

\subsection{Coordinates on the Space of Cocycles}\label{sec:coord_cocyc}

Fix an arbitrary family $\Si = \{\si_{n,i}\}$ of homogeneous free generators for $\nf(Z)$ with $\si_{n,i}$ in half-weight $-n$, and for each word $w$ of half-weight $-n$ in the above generators, let $f_w$ denote the associated element of $A_n = A(Z)_n$.

The following proposition was communicated to the authors in an unpublished letter by Francis Brown. It provides equivalent information to the geometric algorithm in \cite{MTMUEII}, but it allows one to avoid computing the logarithm in a unipotent group, making the algorithm much more practical.

\begin{defn}\label{defn:brown}
For an arbitrary cocycle $c \in {Z^{1, \Gm}_{\PL}}(R)$ for a $\Qb$-algebra $R$, a polylogarithmic word $\la$ of half-weight $-n$, and a word $w$ in the elements of $\Si$ of half-weight $-n$, let 
\[
\phi^w_\la(c) \in \QQ
\]
denote the associated matrix entry of $c^\sharp$, so that in the notation above, we have
\[
\Liu_\la(c) = \sum_{w} \phi_\la^w(c)f_w.
\]
\end{defn}

\begin{prop}\label{prop:brown}

Let $c \in {Z^{1, \Gm}_{\PL}}(R)$ for a $\Qb$-algebra $R$. For $0 \le r < n$, $\tau_1, \dots, \tau_r \in \Si_{-1}$, and $\si \in \Si_{r-n}$, we have
\[
\phi^{\si\tau_1\cdots\tau_r}_{\underbrace{ \footnotesize{e_1 e_0 \cdots e_0}}_{n}}(c)
=
\phi^{\tau_1}_{e_0}(c) \cdots \phi^{\tau_r}_{e_0} (c)
\phi^\si_{\underbrace{e_1 e_0\cdots e_0}_{n-r}}(c),
\] 
and all other matrix entries $\phi_\la^w(c)$ vanish.
\end{prop}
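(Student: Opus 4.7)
The plan is to induct on $n$, using that since $\pi_1^\un(Z)$ acts trivially on $\pi_1^\PL(X)$, the cocycle $c$ is in fact a $\Gm$-equivariant homomorphism of pro-unipotent groups, so dually $c^\sharp \colon \PPL \to A(Z)$ is a $\Gm$-equivariant homomorphism of graded Hopf algebras. The $\Gm$-equivariance yields the ``dimension'' half of the proposition immediately: $\phi^w_\lambda(c) = 0$ unless $w$ has the same half-weight as $\lambda$. In particular, for $\lambda = e_0$ this already forces $w$ to be a single generator in $\Si_{-1}$, and for $\lambda = e_1 e_0^{n-1}$ (that is, $\Liu_n$) the word $w$ must have total half-weight $-n$; this disposes of the base case $n = 1$, since the only words in the $\Si$-generators of half-weight $-1$ are single letters from $\Si_{-1}$.

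For the inductive step, let
\[
P \;:=\; \sum_{r=0}^{n-1} \sum_{\sigma \in \Si_{r-n}} \sum_{(\tau_1, \ldots, \tau_r) \in \Si_{-1}^r} \phi^\sigma_{e_1 e_0^{n-r-1}}(c)\, \phi^{\tau_1}_{e_0}(c) \cdots \phi^{\tau_r}_{e_0}(c)\, f_{\sigma \tau_1 \cdots \tau_r}
\]
denote the expression on the right-hand side of the proposition. I would show $c^\sharp \Liu_n = P$ by matching reduced coproducts. On one side, Corollary \ref{cor:motivic_polylog_coprod} gives
\[
\Delta'(c^\sharp \Liu_n) \;=\; \sum_{i=1}^{n-1} c^\sharp \Liu_{n-i} \,\otimes\, \frac{(c^\sharp \logu)^{\Sha i}}{i!}.
\]
Expanding $c^\sharp \logu = \sum_{\tau \in \Si_{-1}} \phi^\tau_{e_0}(c)\, f_\tau$ and using $f_{\tau_1} \Sha \cdots \Sha f_{\tau_i} = \sum_{\pi \in S_i} f_{\tau_{\pi(1)}\cdots\tau_{\pi(i)}}$, one finds after a routine symmetrization that $(c^\sharp \logu)^{\Sha i}/i! = \sum_{(\tau_1,\ldots,\tau_i)\in \Si_{-1}^i} \prod_j \phi^{\tau_j}_{e_0}(c)\, f_{\tau_1\cdots\tau_i}$, and by the inductive hypothesis the first tensor factor has exactly the shape we have written for $c^\sharp \Liu_{n-i}$.

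To compute $\Delta' P$, apply the deconcatenation formula $\Delta' f_{\sigma \tau_1 \cdots \tau_r} = \sum_{s=0}^{r-1} f_{\sigma \tau_1 \cdots \tau_s} \otimes f_{\tau_{s+1} \cdots \tau_r}$, reindex by $i = r - s$, and split the product $\phi^{\tau_1}_{e_0}(c) \cdots \phi^{\tau_r}_{e_0}(c)$ at $j = s$. The left-hand tensor factor becomes precisely the inductive formula for $c^\sharp \Liu_{n-i}$ (with the role of $r$ played by $s$ and the matrix entry $\phi^\sigma_{e_1 e_0^{(n-i)-s-1}}(c)$ emerging correctly), while the right-hand tensor factor becomes exactly $(c^\sharp \logu)^{\Sha i}/i!$. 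Thus $\Delta' P = \Delta'(c^\sharp \Liu_n)$, so $c^\sharp \Liu_n - P$ is primitive in $A(Z)$.

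Finally, primitive elements of $A(Z)$ in half-weight $n$ are spanned by $\{f_\sigma : \sigma \in \Si_{-n}\}$, so it remains to compare the coefficients of these single-letter basis elements in $c^\sharp \Liu_n$ and in $P$. The former is $\phi^\sigma_{e_1 e_0^{n-1}}(c)$ by definition of the matrix entries; the latter is read off from the $r = 0$ term of $P$, and equals the same $\phi^\sigma_{e_1 e_0^{n-1}}(c)$. Hence $c^\sharp \Liu_n = P$, which simultaneously yields the explicit product formula and the vanishing of all other matrix entries. The only real obstacle is the index bookkeeping in the reduction $i = r - s$ of the coproduct match; once that is written out carefully the rest reduces to formal manipulations in a free shuffle algebra.
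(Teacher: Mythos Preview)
Your proof is correct and follows essentially the same approach as the paper: both apply $\Delta'$ to $c^\sharp \Liu_n$, use Corollary~\ref{cor:motivic_polylog_coprod} on one side and the deconcatenation coproduct on the $f_w$'s on the other, expand $(c^\sharp\logu)^{\Sha i}$ via the shuffle-power identity, and conclude by induction. The only difference is organizational---the paper reads off the one-step recursion $\phi^{v\tau}_{n+1}(c)=\phi^\tau_{e_0}(c)\,\phi^v_n(c)$ and $\phi^{v\sigma}_{n+1}(c)=0$ directly from the coefficients of $f_v\otimes f_\tau$ and $f_v\otimes f_\sigma$, whereas you posit the closed form $P$, match $\Delta'$, and then handle the primitive part; these amount to the same computation.
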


\begin{proof}
This amounts to a straightforward verification, but we nevertheless give the details. We begin with a formal calculation, in which $\Si_{-1}$ may be an arbitrary finite set, and $\{a^\tau\}_{\tau \in \Si_{-1}}$ a family of commuting coefficients. In this abstract setting, we claim that
\[
\left( \sum_{\tau \in \Si_{-1}} a^\tau f_\tau \right) ^{\Sha n}
=
n! \sum_{\tau_1, \dots, \tau_n \in \Si_{-1}}
 a^{\tau_1} \cdots a^{\tau_n} f_{\tau_1 \cdots \tau_n}.
\]
Indeed, the left side of the equation
\begin{align*}
	&= \sum_{\tau_1, \dots, \tau_n} (a^{\tau_1}f_{\tau_1}) \sha
		\cdots \Sha (a^{\tau_n}f_{\tau_n}) \\
	&= \sum_{\tau_1, \dots, \tau_n} a^{\tau_1}\cdots a^{\tau_n}
		\left( 
			\underset{\mbox{of }\tau_1, \dots, \tau_n}
				{\sum_{\mbox{permutations }p}}
			f_{\tau_1^p \cdots \tau_n^p}
		\right) \\
	&= \sum_p 
		\underbrace{
		\sum_{\tau_1, \dots, \tau_n} a^{\tau_1}
		\cdots a^{\tau_n} f_{\tau_1^p \cdots \tau_n^p}
			}_{\mbox{independent of }p}, \\
\end{align*}
which equals the right side of the equation.

Returning to our concrete situation, we apply the reduced coproduct $\Delta'$ to both sides of
\[
\sum_{|w| = -(n+1)} \phi^w_{n+1}(c) f_w 
= \Liu_{n+1}(c),
\]
and compute:
\begin{align}\label{align:brown}
\sum_{w', w''} \phi^{w'w''}_{n+1}(c) f_{w'}\otimes f_{w''}
 &= \sum_{i=1}^n  \Liu_{n+1-i}(c) \otimes \frac{(\logu(c))^{\Sha i}}{i!} \\
 &= \sum_{i=1}^n 
 	\left(
		\Liu_{n+1-i}(c)
		\otimes \frac
		{(\sum_{\tau \in \Si_{-1} } \phi^\tau_{e_0}(c) f_\tau)^{\Sha i}}
		{i!}
 	\right) \\
 &= \sum_{i=1}^n
  \sum_{\tau_1, \dots, \tau_i \in \Si_{-1}}
  \sum_{|w| = n+1-i}
  \phi^{\tau_1}_{e_0}(c) \cdots \phi^{\tau_i}_{e_0}(c) \phi^w_{n+1-i}(c)
   f_w \otimes f_{\tau_1 \cdots \tau_i}.\label{align:brown3}
\end{align}
Taking the coefficient of $f_v \otimes f_\tau$, with $\tau \in \Si_{-1}$, and $v$ an arbitrary word of length $n \ge 1$, we obtain
\[
\phi^{v \tau}_{n+1}(c) = \phi^\tau_{e_0}(c) \cdot \phi^v_n(c),
\]
while taking the coefficient of $f_v \otimes f_\si$, with $\si \in \Si_{-i \, < -1}$, and $v$ an arbitrary word of length $n+1-i \ge 1$, we obtain
\[
\phi^{v \si}_{n+1}(c) = 0. \qedhere
\] 
\end{proof}

We have a morphism
\[
\Cc_{\PL} \colon \ZPL \times \pi_1^\un(Z) \to \pi_1^\PL(X)
\]
given by
\[
(c,\gamma) \mapsto c(\gamma).
\]
We refer to $\Cc_{\PL}$ as the \emph{universal polylogarithmic cocycle}, and it is just the first component of the universal cocycle evaluation morphism $\evalu_\Pi$ for $\Pi = \pi_1^\PL(X)$. We define a morphism 
\[
\Psi: \ZPL \to 
\Spec \QQ \left[ \left\{ \Phi^\rho_\la \right\}
	_{wt(\rho) = wt(\la) } \right],
\]
where $\rho$ ranges over $\Si$ and $\la$ ranges over the set of polylogarithmic words, by
\[
c \mapsto (\phi^\rho_\la(c))_{\rho,\la}.
\]
We define a homomorphism of rings
\[
\theta^\sharp \colon 
\QQ[\logu, \Liu_1, \Liu_2, \dots]
\to
A(Z)  [ \left\{ \Phi^\rho_\la \right\}_{\rho, \la} ]
\]
by
\[
\logu
\mapsto
\sum_{\tau \in \Si_{-1}} f_\tau \Phi^\tau_{e_0} 
\]
and
\[
\Liu_n
\mapsto
\sum
_{
\substack{\tau_1, \dots, \tau_r \in \Si_{-1} \\
\si \in \Si_{-s} \\
r+s = n \\
1 \le s \le n
}
}
f_{\si \tau_1 \cdots \tau_r}
 \Phi^{\tau_1}_{e_0} \cdots \Phi^{\tau_r}_{e_0} 
\Phi^\si_{\underbrace{e_1 e_0 \cdots e_0}_{s}}
.
\]

\begin{cor}\label{cor:psi_isom}
The morphism $\Psi$ is an isomorphism. In particular, $\ZPL$ is canonically an affine space endowed with coordinates. Moreover, the triangle
\[
\xymatrix{
\ZPL \times \pi_1^\un(Z) \ar[rr]_-\sim^-{\Psi \times id}
\ar[dr]_-{\Cc_{\PL}}
&&
(\Spec \QQ [ \left\{ \Phi^\rho_\la \right\}_{\rho, \la} ] )
 \times \pi_1^\un(Z)
\ar[dl]^-{\theta}
\\
&
\pi_1^\PL(X) 
}
\]
commutes.
\end{cor}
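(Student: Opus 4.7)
The plan is to use Proposition \ref{prop:brown} to show that any cocycle $c \in \ZPL(R)$ is determined by its atomic matrix entries $\phi^\rho_\la(c)$ (for $\rho \in \Si$ and $\la$ a polylogarithmic word of matching weight), and conversely that any assignment of values to these atomic coordinates extends to a cocycle. Concretely, a cocycle is the same as a $\Gm$-equivariant Hopf algebra map $c^\sharp \colon \PPL \to A(Z)_R$, and since $\PPL = \QQ[\logu, \Liu_1, \Liu_2, \ldots]$ is polynomial as an algebra, $c^\sharp$ is determined by the images $\Liu_\la(c) = \sum_w \phi^w_\la(c) f_w$ of the polylog generators. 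Proposition \ref{prop:brown} expresses every non-vanishing $\phi^w_\la(c)$ as a monomial in the atomic entries $\phi^\tau_{e_0}(c)$ for $\tau \in \Si_{-1}$ and $\phi^\si_{e_1 e_0 \cdots e_0}(c)$ for $\si \in \Si$, so $\Psi$ is functorially injective and the corresponding coordinate ring map $\Psi^\sharp$ sends the generators $\Phi^\rho_\la$ onto a generating family.

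To produce an inverse, I would assemble a candidate cocycle $\widetilde{c}$ from arbitrary atomic data $(\Phi^\rho_\la)$ by defining $\widetilde{c}^\sharp$ on the polylog generators via the very formulas that appear in $\theta^\sharp$, and extending uniquely by the universal property of the polynomial ring $\PPL$. Two conditions must then be checked: $\Gm$-equivariance is immediate since each term in the formula is bihomogeneous of the correct total weight; and the coalgebra compatibility $\Delta' \widetilde{c}^\sharp(\Liu_n) = \sum_{i=1}^{n-1} \widetilde{c}^\sharp(\Liu_{n-i}) \otimes \frac{\widetilde{c}^\sharp(\logu)^{\Sha i}}{i!}$ of Proposition \ref{prop:polylog_coprod} must be verified. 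I expect the latter to be the principal obstacle, but it amounts to running the calculation (\ref{align:brown})--(\ref{align:brown3}) in the proof of Proposition \ref{prop:brown} in reverse: applying the deconcatenation coproduct $\Delta f_w = \sum_{w_1 w_2 = w} f_{w_1} \otimes f_{w_2}$ to the prescribed formula for $\widetilde{c}^\sharp(\Liu_n)$ and collecting terms reproduces the desired right-hand side. That this inverse is actually a morphism of schemes follows because it is given by the explicit polynomial formula $\theta^\sharp$ on coordinate rings.

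Finally, the commutativity of the triangle reduces to tracing a generator $\Liu_\la$ of $\Oc(\pi_1^\PL(X))$ around the two sides. Along $\Cc_{\PL}^\sharp$, $\Liu_\la$ pulls back to the function $(c, \gamma) \mapsto c^\sharp(\Liu_\la)(\gamma) = \sum_w \phi^w_\la(c) f_w(\gamma)$, while along $(\Psi \times \mathrm{id})^\sharp \circ \theta^\sharp$ it pulls back to the result of the substitution $\Phi^\rho_\la \mapsto \phi^\rho_\la$ in the defining formula for $\theta^\sharp(\Liu_\la)$. By Proposition \ref{prop:brown} these expressions agree term by term, so it suffices to check the algebra generators $\logu$ and $\Liu_n$, where both sides match by direct inspection of the formulas defining $\theta^\sharp$.
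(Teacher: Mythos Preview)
Your proposal is correct and follows essentially the same approach as the paper: injectivity and commutativity from Proposition \ref{prop:brown}, and surjectivity by defining $c^\sharp$ via the formulas in $\theta^\sharp$ and verifying the coproduct compatibility by rerunning the computation (\ref{align:brown})--(\ref{align:brown3}). The paper's proof is slightly terser and makes explicit one point you leave implicit, namely that the $r=0$ terms $f_\si$ in the formula for $\widetilde{c}^\sharp(\Liu_n)$ are primitive (so they contribute nothing to the reduced coproduct and the calculation goes through).
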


\begin{proof}
The injectivity of $\Psi$, as well as the commutativity of the diagram, follow directly from Proposition \ref{prop:brown}. The surjectivity of $\Psi$ amounts to the statement that given any family $a^\rho_\la $ of elements of a $\QQ$-algebra $R$, the $R$-algebra homomorphism
\[
c^\sharp:   R[\logu, \Liu_1, \Liu_2, \dots] \to R \otimes A(Z)
\]
given by
\[
\logu \mapsto \sum_{\tau \in \Si_{-1}} a^\tau_{e_0} f_\tau
\]
and
\[
\Liu_n
\mapsto
\sum
_{
\substack{\tau_1, \dots, \tau_r \in \Si_{-1} \\
\si \in \Si_{r-n} \\
0 \le r \le n-1
}}
a^{\tau_1}_{e_0} \cdots a^{\tau_r}_{e_0} 
a^\si_{\underbrace{e_1 e_0 \cdots e_0}_{n-r}}
f_{\si \tau_1 \cdots \tau_r}
\]
is compatible with the coproduct. For $\logu$, this is because both sides are primitive. For $\Liu_n$, this follows by the computation of (\ref{align:brown})--(\ref{align:brown3}) and the fact that the terms for which $r=0$ are primitive.

\end{proof}

\subsubsection{Variant in Bounded Weight}

Let $n$ be a positive integer. Then there is a natural isomorphism $\ZPLn \xrightarrow{\Psi_n} (\Spec \QQ [ \left\{ \Phi^\rho_\la \right\}_{\rho, |\la| \le n} ] )$ such that the square
\[ 
\xymatrix{
\ZPL \ar[r]_-\sim^-{\Psi}
\ar[d]
&
\Spec \QQ [ \left\{ \Phi^\rho_\la \right\}_{\rho, \la} ] 
\ar[d]
\\
\ZPLn \ar[r]_-\sim^-{\Psi_n}
&
\Spec \QQ [ \left\{ \Phi^\rho_\la \right\}_{\rho, |\la| \le n} ] 
}
\]
commutes, where the vertical arrows are the natural projections.

\subsection{Kummer and Period Maps in Coordinates}\label{sec:kappa_in_coords}

Given $z \in X(Z)$, we recall that, by definition, 
\[
\logu(z) = \logu(\kappa(z)),
\]
\[
\Liu_n(z) = \Liu_n(\kappa(z)).
\]

We describe $\kappa_{\pf}$ in these coordinates. More precisely, for $z \in X(Z_{\pf})$, the value $\kappa_{\pf}$ is the element of $\Pi_{\PL,n}(\Qp)$ sending $\logu$ to $\logp(z)$ and $\Liu_n$ to $\Lip_n(z)$.

Finally, we describe $\mathrm{per}_{\pf}$ in these coordinates. More precisely, we have
\[
\mathrm{per}_{\pf}(\logu(z)) = \logp(z),
\]
and
\[
\mathrm{per}_{\pf}(\Liu_n(z)) = \Lip_n(z),
\]
which in particular expresses the commutativity of Kim's cutter.

\section{Computations for \texorpdfstring{$Z=\Spec{\Zb[1/S]}$}{Z=Spec(Z[1/S])}}\label{sec:ex_wt4}

For the rest of this article, we suppose that $Z$ has function field $\Qb$, so that $\pf \in Z$ is just the prime number $p$.

\subsection{Abstract Coordinates for \texorpdfstring{$Z=\Spec{\Zb[1/S]}$}{Z=Spec(Z[1/S])}}\label{sec:abs_over_S}

We let $\ell$ denote a prime number. We want to fix free generators $(\tau_\ell)_{\ell \in S}$ and $(\sigma_{2n+1})_{n \ge 1}$ for $\nf(Z)$, with $\tau_\ell$ in degree $-1$ and $\sigma_{2n+1}$ in degree $-2n-1$. We first recall some notation from \cite[\S 3.2]{MTMUEII}.

Letting $A_n = A(Z)_n$ denote the degree $n$ part of $A(Z)$, and $A_{>0} = \bigoplus_{n=1}^\infty A_n$, we let $E_n=E_n(Z)$ denote the kernel of $\Delta'_n$ and $D_n=D_n(Z)$ the subspace of decomposable elements of degree $n$, i.e., the degree $n$ elements in the image of
\[
A_{>0} \otimes A_{>0} \xrightarrow{\mathrm{mult}} A.
\]
We let $P_n=P_n(Z)$ denote a vector subspace of $A_n$ complementary to $E_n$ and $D_n$. The symbols $\Ec_n$, $\Dc_n$, and $\Pc_n$ refer to bases of $E_n$, $D_n$, and $P_n$, respectively.

\begin{prop}\label{prop:gens_A_over_ell}
One may choose free generators $(\tau_\ell)_{\ell \in S}$ and $(\sigma_{2n+1})_{n \ge 1}$ for $\nf(Z)$, with $\tau_\ell$ in degree $-1$ and $\sigma_{2n+1}$ in degree $-2n-1$, such that $f_{\tau_\ell} = \logu(\ell)$ and $f_{\sigma_{2n+1}} = \zetau(2n+1)$.

Furthermore a choice of $P_{2n+1}$ uniquely determines $\sigma_{2n+1}$.

\end{prop}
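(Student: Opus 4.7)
The plan is to identify choices of free generators of $\nf(Z)$ with bases of the primitive subspace $E \subseteq A(Z)$ together with lifts $\nf(Z)^{\mathrm{ab}} \to \nf(Z)$; then to exhibit $\logu(\ell)$ and $\zetau(2n+1)$ as such a basis; and finally to show that a choice of $P_{2n+1}$ encodes exactly the data of a canonical lift.

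First I will recall that, since $\nf(Z)$ is free pro-nilpotent, a collection of homogeneous elements forms a set of free generators if and only if its image in $\nf(Z)^{\mathrm{ab}}$ is a basis. The canonical embedding $\bigoplus_n K_{2n-1}(Z)_{\Qb} \hookrightarrow A(Z)$ onto the primitives (Section 2.2.1) dualizes to the identification $\nf(Z)^{\mathrm{ab}} \cong E^\vee$, so the existence half reduces to verifying that $\{\logu(\ell)\}_{\ell \in S} \cup \{\zetau(2n+1)\}_{n \ge 1}$ is a graded basis of $E$. For $E_1$ this follows from $K_1(Z)_{\Qb} = \Oc(Z)^\times \otimes \Qb$ having basis $\{\log \ell\}_{\ell \in S}$; for $E_{2n+1}$ with $n \ge 1$, Borel's theorem gives $\dim E_{2n+1} = 1$ and the only remaining input is $\zetau(2n+1) \ne 0$, which is standard. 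Primitivity of $\logu(\ell)$ will follow from Remark \ref{rem:single_letter_prim}, and primitivity of $\zetau(2n+1) = \Liu_{2n+1}(c_1)$ from Corollary \ref{cor:motivic_polylog_coprod} applied with $\logu(c_1) = 0$.

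For the uniqueness half, I will fix the basis element $\bar\sigma_{2n+1} \in \nf(Z)^{\mathrm{ab}}_{-(2n+1)}$ dual to $\zetau(2n+1)$. Its lifts to $\nf(Z)_{-(2n+1)}$ form a torsor under $[\nf(Z),\nf(Z)]_{-(2n+1)}$, in bijection with sections of the projection $\nf(Z)_{-(2n+1)} \twoheadrightarrow \nf(Z)^{\mathrm{ab}}_{-(2n+1)}$. The graded duality between $A(Z)$ and $\Uc\nf(Z)$ makes primitives on each side pair trivially with decomposables on the other, so $\nf(Z)_{-(2n+1)}^\vee = A_{2n+1}/D_{2n+1}$ with $(\nf(Z)^{\mathrm{ab}}_{-(2n+1)})^\vee = E_{2n+1}$ sitting inside (using $E_{2n+1} \cap D_{2n+1} = 0$: in any free shuffle algebra, primitives are single letters and decomposables have length $\ge 2$). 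Sections then dualize to retractions of this inclusion, i.e.\ to complements of $E_{2n+1}$ in $A_{2n+1}/D_{2n+1}$. A choice of $P_{2n+1}$ produces such a complement, namely its image in $A_{2n+1}/D_{2n+1}$, and thereby a unique lift $\sigma_{2n+1}$.

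The main obstacle will be the careful bookkeeping of this graded duality: checking that the relevant pairings are nondegenerate, and that the assignment $P_{2n+1} \mapsto \sigma_{2n+1}$ is well defined (note that distinct $P_{2n+1}$'s can project to the same complement modulo $D_{2n+1}$, yielding the same $\sigma_{2n+1}$, which is consistent with ``uniquely determines'' rather than ``in bijection''). The earlier steps, including linear independence of $\{\logu(\ell)\}_{\ell \in S}$ and non-vanishing of $\zetau(2n+1)$, will reduce to routine invocations of Borel's theorem and the Hopf-algebraic structure of $A(Z)$.
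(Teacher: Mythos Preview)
Your proposal is correct and follows essentially the same approach as the paper: both arguments reduce the existence half to showing that $\logu(\ell)$ and $\zetau(2n+1)$ are primitive and span the graded pieces of $E$ (using Corollary~\ref{cor:motivic_polylog_coprod} and Borel's computation of $K$-groups), and both handle the uniqueness half by observing that a lift of the dual basis element from $\nf(Z)^{\mathrm{ab}}$ to $\nf(Z)$ is determined by a complement to $E_{2n+1}$ inside $A_{2n+1}/D_{2n+1}$, hence by $P_{2n+1}$. The only difference is packaging: the paper invokes \cite[Proposition~3.2.3]{MTMUEII} for the duality between generators of $\nf(Z)$ and the data $(\Ec_n,\Dc_n,\Pc_n)$, whereas you unpack that duality directly via $\nf(Z)^\vee \cong I/I^2 = A_{>0}/D$, and you treat $E_1$ (dimension $|S|$) separately rather than folding it into the odd-$n$ case.
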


\begin{proof}
A computation using Corollary \ref{cor:motivic_polylog_coprod} shows that $\logu(\ell)$ and $\zetau(2n+1)$ are primitive elements of the Hopf algebra $A(Z)$ (or, in the terminology of \cite[1.6]{MTMUEII}, they lie in the space $E_n$ of extensions). In fact, by our knowledge of the rational algebraic K-theory of $Z$, we know that $E_n$ is one-dimensional when $n$ is odd and zero-dimensional otherwise. Therefore, the elements $\logu(\ell)$ and $\zetau(2n+1)$ must span the spaces $E_n$, and we take them as our $\Ec_n$.

By \cite[Proposition 3.2.3]{MTMUEII}, for a choice of $\Ec_n$, $\Dc_n$, and $\Pc_n$, we get a set of generators for the Lie algebra, which are dual to the elements of $\Ec_n$. In fact, the part of the condition of being dual that depends on $\Dc_n$ and $\Pc_n$ is that the element of the Lie algebra pairs to zero with all of $\Dc_n \cup \Pc_n$, so it in fact depends only on $P_n$ and $D_n$. The latter is uniquely determined, so a choice of $P_n$ determines such a choice of generators.
\end{proof}

\begin{rem}\label{rem:canonical2}
The choice of $\logu(\ell)$ and $\zetau(2n+1)$ corresponds to choosing generators for the rational algebraic K-groups of $Z$. The arbitrariness in choosing $P_n$ then corresponds precisely to the non-canonicity discussed toward the end of Section \ref{sec:mixed_tate_fund_grp}.
\end{rem}

%
%

Give such a set of generators, we get an abstract basis for $A(Z)$. For each word $w$ of half-weight $-n$ in the above generators, we have an element $w \in \Uc \nf(Z)$, and these form a basis of $\Uc \nf(Z)$. We let $(f_w)_w$ denote the dual basis for $A(Z)$. With the choices above, we have
\[
f_{\tau_\ell} = \logu(\ell)
\]
\[
f_{\sigma_{2n+1}} = \zetau(2n+1).
\]

In order to find bases of $P_n$ and verify relations between different $\Liu_n(z)$'s, we need to apply the reduced coproduct to reduce the computation in degree $n$ to the computation in degrees $m<n$. For this, we need the exact sequence
\begin{prop}\label{prop:exact_sequence}
The sequence

\[
0 \to E_n \to A_n \xrightarrow{\Delta'} \bigoplus
_{
\substack{
i+j = n\\
i,j \ge 1
}
} A_i \otimes A_j
\xrightarrow{\Delta' \otimes \mathrm{id} - \mathrm{id} \otimes \Delta'}
\bigoplus
_{
\substack{
i+j+k = n\\
i,j,k \ge 1
}
} A_i \otimes A_j \otimes A_k
\]
is exact, and $E_n = \Ext^1_{\MT(Z, \QQ)}(\Qb(0),\Qb(n)) = K_{2n-1}(Z)_\Qb$.
\end{prop}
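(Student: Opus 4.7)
The plan is to prove the exactness one position at a time, then derive the identification with an Ext group from the Tannakian formalism already set up in Section \ref{sec:mixed_tate_fund_grp}. Exactness at $A_n$ is immediate from the definition $E_n = \ker \Delta'_n$. That the composition of the two maps is zero amounts to coassociativity: expanding $(\Delta \otimes \mathrm{id})\Delta = (\mathrm{id} \otimes \Delta)\Delta$ in terms of $\Delta' = \Delta - 1 \otimes (\cdot) - (\cdot) \otimes 1$ and discarding all terms that land outside $A_+ \otimes A_+ \otimes A_+$ (those involving factors of $1$), one obtains $(\Delta' \otimes \mathrm{id})\Delta' = (\mathrm{id} \otimes \Delta')\Delta'$ on $A_+$. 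This step is a routine bookkeeping check that I would sketch but not grind through.

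The main step is exactness in the middle. Here I would exploit the (non-canonical) isomorphism, recalled in Section \ref{sec:mixed_tate_fund_grp}, between $A(Z)$ and the free shuffle Hopf algebra on the graded vector space $V = \bigoplus_{m \geq 1} K_{2m-1}(Z)_\Qb$. In this model, $A(Z)$ has a basis $\{f_w\}$ indexed by words in a graded set of generators, and the reduced coproduct is deconcatenation: $\Delta' f_w = \sum_{w = w_1 w_2,\, w_1, w_2 \neq \emptyset} f_{w_1} \otimes f_{w_2}$. Given $\alpha = \sum c_{w_1, w_2}\, f_{w_1} \otimes f_{w_2}$ in $\bigoplus_{i+j=n} A_i \otimes A_j$ with $(\Delta' \otimes \mathrm{id} - \mathrm{id} \otimes \Delta')\alpha = 0$, comparing coefficients of $f_{u_1} \otimes f_{u_2} \otimes f_{u_3}$ on the two sides forces $c_{w_1, w_2}$ to depend only on the concatenation $w = w_1 w_2$, and then setting $x = \sum_{|w| = n} c_w\, f_w$ gives $\Delta'(x) = \alpha$. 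This is Koszul duality between the free graded Lie algebra $\nf(Z)$ and the cofree cocommutative shuffle coalgebra $A(Z)$; equivalently, it is the vanishing of $H^2$ of the reduced cobar complex of a cofree coalgebra.

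For the identification with $\Ext^1$, the Tannakian equivalence $\MT(Z,\QQ) \simeq \mathrm{Rep}(\pi_1^{\MT}(Z))$ together with the semisimplicity of $\Gm$ and the splitting $\pi_1^{\MT}(Z) = \pi_1^\un(Z) \rtimes \Gm$ reduces $\Ext^1_{\MT(Z,\QQ)}(\Qb(0),\Qb(n))$ to the space of $\Gm$-equivariant crossed homomorphisms $\pi_1^\un(Z) \to \Qb(n)$ modulo principal ones. By the pro-unipotent Hilbert 90, this is the degree $n$ part of $\mathrm{Hom}(\nf(Z)^{\mathrm{ab}}, \Qb)$, which under our duality is precisely the space of degree $n$ primitives of $A(Z)$, i.e., $E_n$. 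The equality $E_n = K_{2n-1}(Z)_\Qb$ is then the Borel computation recalled at the start of Section \ref{sec:mixed_tate_fund_grp}. The main obstacle is the middle exactness: while the combinatorial argument above is short once the shuffle model is in place, the appeal to Koszul duality is the substantive input, and some care is needed to check that the non-canonical isomorphism used is compatible with the grading so that the degree $n$ piece behaves as claimed.
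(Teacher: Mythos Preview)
Your argument is correct, but it follows a different path from the paper's.

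The paper's proof is essentially a one-line citation: it identifies the displayed sequence as the degree~$n$ piece of the reduced cobar complex of the graded Hopf algebra $A(Z)$, quotes \cite[\S 3.16]{BGSV90} to the effect that the $k$th cohomology of this complex is $\Ext^k_{\mathrm{GrComod}(A)}(\Qb(0),\Qb(n))$, and then observes that graded $A$-comodules are exactly objects of $\MT(Z,\QQ)$. Exactness at $A_n$ then says $E_n=\Ext^1$, and middle exactness is the statement $\Ext^2=0$, which was recorded in Section~\ref{sec:mixed_tate_fund_grp} as part of the structural input on $\MT(Z,\QQ)$.

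You instead prove the middle exactness by hand in the shuffle model: the cocycle condition $c_{u_1u_2,u_3}=c_{u_1,u_2u_3}$ forces the coefficients to depend only on the concatenated word, and the obvious $x=\sum c_w f_w$ is a preimage. This is a clean direct argument, and it works; it is essentially the observation that the cobar complex of the tensor coalgebra is acyclic in positive degrees. Note that the input is the same as the paper's---the freeness of $\pi_1^\un(Z)$, equivalently $\Ext^2=0$---only you use it in the guise of the shuffle presentation of $A(Z)$ rather than as a vanishing Ext group. (A small terminological quibble: the deconcatenation coalgebra is cofree conilpotent coassociative, not cofree cocommutative, but your computation does not use cocommutativity.) Your separate derivation of $E_n=\Ext^1$ via $\Gm$-equivariant homomorphisms and primitives is also fine, though once the cobar identification is in place this comes for free, which is what the paper exploits.
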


\begin{proof}
We have the reduced cobar complex
\[
A_{>0} \xrightarrow{\Delta'} A_{>0} \otimes A_{>0} \xrightarrow{\Delta' \otimes \id - \id \otimes \Delta'} A_{>0} \otimes A_{>0} \otimes A_{>0} \xrightarrow{\Delta' \otimes \id \otimes \id - \id \otimes \Delta' \otimes \id + \id \otimes \id \otimes \Delta'} \cdots, \]
which is a complex of graded vector spaces.
\cite[\S 3.16]{BGSV90} notes that the $k$th cohomology of the degree $n$ piece of this complex is $\Ext^k_{\mathrm{GrComod}(A)}(\Qb(0),\Qb(n))$. The result then follows because the category of graded comodules over $A$ is the same as the category of representations of $\pi_1^{\MT}(Z)$, or equivalently, the category $\MT(Z, \QQ)$, and we know that
$
\Ext^1_{\MT(Z, \QQ)}(\Qb(0),\Qb(n)) = K_{2n-1}(Z)_\Qb$.

\end{proof}

We recall Definitions \ref{defn:delta_n} and \ref{defn:delta_ij}. In those definitions, we let $\Delta'_n$ denote the restriction of $\Delta'$ to $A_n$. For $i+j=n$, we let $\Delta_{i,j}'$ denote the component of $\Delta'_n$ landing in $A_i \otimes A_j$.

The following corollary will be useful for computations in half-weight $4$:

\begin{cor}\label{cor:injectivity_12}
If $K$ is totally real, then $\ker(\Delta_{1,2}') = \ker(\Delta'_3) = E_3$.
\end{cor}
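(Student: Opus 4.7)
The plan is to split the corollary into its two equalities. The second equality $\ker(\Delta'_3) = E_3$ is immediate from the exactness of the sequence in Proposition~\ref{prop:exact_sequence} applied in degree $n=3$. For the first equality, the containment $\ker(\Delta'_3) \subseteq \ker(\Delta'_{1,2})$ is trivial because $\Delta'_{1,2}$ is one component of the decomposition $\Delta'_3 = \Delta'_{1,2} \oplus \Delta'_{2,1}$. All the content lies in showing the reverse: if $x \in A_3$ and $\Delta'_{1,2}(x) = 0$, then also $\Delta'_{2,1}(x) = 0$.

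To prove this implication, I would exploit the next differential in the complex of Proposition~\ref{prop:exact_sequence}, which in particular tells us that $(\Delta' \otimes \id - \id \otimes \Delta') \circ \Delta' = 0$. Given such an $x$, write
\[
\Delta'_3(x) = \Delta'_{2,1}(x) = \sum_j c_j \otimes d_j
\]
with $c_j \in A_2$ and with the $d_j \in A_1$ chosen linearly independent (this is always possible after combining terms). The key structural fact is that $A_1 = E_1$: indeed, the direct sum $\bigoplus_{i+j=1,\ i,j\ge 1} A_i \otimes A_j$ is empty, so $\Delta'_1 = 0$ and every element of $A_1$ is primitive. Consequently $(\id \otimes \Delta')\left(\sum_j c_j \otimes d_j\right) = 0$, and so applying $\Delta' \otimes \id - \id \otimes \Delta'$ to $\Delta'_3(x)$ reduces to
\[
\sum_j \Delta'(c_j) \otimes d_j = 0 \quad \text{in } A_1 \otimes A_1 \otimes A_1.
\]
Linear independence of the $d_j$ forces $\Delta'(c_j) = 0$, that is, $c_j \in E_2$ for each $j$.

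The totally-real hypothesis enters only here, at the final step: by Proposition~\ref{prop:exact_sequence} and Borel's computation recalled in Section~\ref{sec:mixed_tate_fund_grp}, we have $E_2 = K_3(Z)_{\QQ}$, which has dimension $r_2$; when $K$ is totally real, $r_2 = 0$ and therefore $E_2 = 0$. Each $c_j$ vanishes, so $\Delta'_{2,1}(x) = 0$, hence $x \in \ker(\Delta'_3) = E_3$, as desired.

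The main obstacle is essentially absent: the argument is a short diagram chase using the co-Leibniz condition together with primitivity of $A_1$. The only delicate point is identifying \emph{where} the totally-real hypothesis is needed, which is precisely the vanishing $E_2 = 0$; over a number field with a complex place one would expect the corollary to fail, as a nonzero degree-$2$ primitive $c$ would produce elements $x$ with $\Delta'_{2,1}(x) = c \otimes d \neq 0$ but $\Delta'_{1,2}(x) = 0$.
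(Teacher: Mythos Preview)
Your proof is correct and follows essentially the same approach as the paper: both use coassociativity (the vanishing of $(\Delta'\otimes\id-\id\otimes\Delta')\circ\Delta'_3$), the fact that $\Delta'_1=0$, and the totally-real hypothesis to get $E_2=0$. The only cosmetic difference is that the paper phrases the final step as ``$\Delta'_2$ is injective, hence $\Delta'_2\otimes\id$ is injective'' rather than choosing the $d_j$ linearly independent, but this is the same argument.
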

\begin{proof}
Let $\alpha \in A_3$. Since $\Delta'_1$ is zero (by Remark \ref{rem:single_letter_prim}), we have
\begin{eqnarray*}
(\Delta'_2 \otimes \mathrm{id} - \mathrm{id} \otimes \Delta'_2)(\Delta'_3(\alpha)) &=& 
(\Delta'_2 \otimes \mathrm{id})(\Delta'_{2,1}(\alpha)) - (\mathrm{id} \otimes \Delta'_2)(\Delta'_{1,2}(\alpha))\\
&=& 0.
\end{eqnarray*}
As $K$ is totally real, it has no complex places, so we have $K_3(Z)_\Qb = 0$. Then by Proposition \ref{prop:exact_sequence}, we have that $E_2 = 0$. Therefore, $\Delta'_2$ is injective, hence $\Delta'_2 \otimes \id$ is injective on $A_2 \otimes A_1$. By the displayed equation and the previous sentence, it follows that if $\Delta'_{1,2}(\alpha)=0$, then $\Delta'_{2,1}(\alpha) = 0$ as well. Therefore, $\ker(\Delta_{1,2}') = \ker(\Delta'_3) = E_3$.
\end{proof}


\subsubsection{Coordinates on the space of cocycles for $Z=\Spec{\Zb[1/\ell]}$}\label{sec:coords_cocycles_one_over_ell}

Relative to the chosen coordinates, we name coordinates for $\ZPLn$ when $S=\{\ell\}$. Specifically, we set
\[
w_0 \colonequals \phi^{\tau_\ell}_{e_0}
\]
\[
w_1 \colonequals \phi^{\tau_\ell}_{e_1}
\]
\[
w_i \colonequals  \phi^{\sigma_{2i-1}}_{\underbrace{ \footnotesize{e_1 e_0 \cdots e_0}}_{2i-1}} \hspace{0.5cm} 2 \le i \le \left\lceil \frac{n}{2} \right\rceil
\]
in the notation of Proposition \ref{prop:brown}. In fact, for $i \ge 2$, $w_i$ makes sense even when $|S|>1$, and we use it in Section \ref{sec:case_1/3}.

If $z \in X(Z)$, we write $w_i(z)$ for $w_i(\kappa(z))$. In this notation, $w_0(z) = \ord_\ell z$, and $w_1(z) = - \ord_\ell(1-z)$, both by Fact \ref{fact:functional_equations}.

In this case, for $n \ge k$, we have
\[\evalu^\#_{\PL,n}(\logu) = \evalu^\#_{\PL}(\logu) = w_0 f_{\tau} \]
\begin{equation}\label{eqn:cc_li}
\evalu^\#_{\PL,n}(\Liu_k) = \evalu^\#_{\PL}(\Liu_k) = w_1 w_0^{k-1} f_{\tau}^k/k! + \sum_{i=2}^{\lceil \frac{k}{2} \rceil} w_i w_0^{k-2i+1} f_{\sigma_{2i-1} \tau^{k-2i+1}}.
\end{equation}


With this notation in hand, the reader may skip to Section \ref{sec:symmetrization} at this point if they so choose.

\subsection{The Geometric Step for \texorpdfstring{$\Zb[1/\ell]$}{Z[1/l]} in Half-Weight \texorpdfstring{$4$}{4}}\label{sec:geometric_step}

\subsubsection{Coordinates for the Galois Group}

Let $Z=\Zb[1/\ell]$. Then $\nf(Z)_{\ge -4}$ is three-dimensional as a vector space, generated by $\tau = \tau_\ell$, $\sigma = \sigma_3$, and $[\sigma,\tau]$. As $P_3=0$ when $|S|=1$, the elements $\tau,\sigma$ are already well-defined. We choose $f_\tau$, $f_\sigma$, and $f_{\sigma \tau}$ as a set of affine coordinates on $\pi_1^\un(Z)_{\ge -4}$.

\subsubsection{Coordinates for the Selmer Variety}

In this case, the only nonzero coordinates are $w_0$, $w_1$, and $w_2$.

\subsubsection{The Universal Cocycle Evaluation Morphism}

We now write the morphism $\evalu_{\PL,4}$ in these coordinates. We have $\evalu_{\PL,4}^\#(f_i)=f_i$ for $i=\tau,\sigma,\sigma\tau$. 

Using (\ref{eqn:cc_li}), we find that $\evalu_{\PL,4}^\#(\logu)=w_0 f_\tau$, $\evalu_{\PL,4}^\#(\Liu_1)=w_1 f_\tau$, and $$\evalu_{\PL,4}^\#(\Liu_2) = w_0 w_1 f_{\tau}^2/2.$$

At this point, we already see that the function on $\Pi_{\PL,4} \times \pi_1^\un(Z)_{\ge -4}$ given by $$\Liu_2 - \frac{1}{2} \logu \Liu_1$$ vanishes on the image of $\evalu_{\PL,4}$, i.e., is in $\Ic^Z_{\PL,4}$.

Again, by Equation (\ref{eqn:cc_li}), we have
\[\evalu_{\PL,4}^\#(\Liu_3) = w_1 w_0^2 f_\tau^3/6 + w_2 f_\sigma,\]
\[\evalu_{\PL,4}^\#(\Liu_4) = w_1 w_0^3 f_\tau^4/24 + w_0 w_2 f_{\sigma \tau}.\]

To construct a second element of $\Ic^Z_{\PL,4}$, we first eliminate $w_2$ by considering
\begin{eqnarray*}
\evalu_{\PL,4}^\#(f_\sigma f_\tau \Liu_4 - f_{\sigma \tau} \logu \Liu_3) &=& w_1 w_0^3 f_\sigma f_\tau^5/24 -   f_{\sigma\tau} w_1 w_0^3 f_\tau^4/6\\ 
&=& \frac{w_1 w_0^3 f_\tau^4}{24} \left(f_\sigma f_\tau - 4 f_{\sigma \tau}\right)\\
&=& \frac{\evalu_{\PL,4}^\#((\logu)^3 \Liu_1)}{24} \left(f_\sigma f_\tau - 4 f_{\sigma \tau}\right)
\end{eqnarray*}

It follows that
\[
\evalu_{\PL,4}^\#\left(f_\sigma f_\tau \Liu_4 - f_{\sigma \tau} \logu \Liu_3 - \frac{(\logu)^3 \Liu_1}{24} \left(f_\sigma f_\tau - 4 f_{\sigma \tau}\right)\right) = 0
\]

In other words, 

\begin{prop}\label{prop:geom_wt4}
The two functions
\[\Liu_2 - \frac{1}{2} \logu \Liu_1\]
\[f_\sigma f_\tau \Liu_4 - f_{\sigma \tau} \logu \Liu_3 - \frac{(\logu)^3 \Liu_1}{24} \left(f_\sigma f_\tau - 4 f_{\sigma \tau}\right)\]
on $\Pi_{\PL,4} \times \pi_1^\un(Z)_{\ge -4}$ are in $\Ic^Z_{\PL,4}$ for $Z = \Spec{\Zb[1/\ell]}$.
\end{prop}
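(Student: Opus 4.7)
The proof plan is essentially a direct computation: apply Proposition \ref{prop:brown} (equivalently, the evaluation formula (\ref{eqn:cc_li})) to each polylogarithmic generator in $\Oc(\Pi_{\PL,4})$ and then find polynomial combinations whose image under $\evalu_{\PL,4}^{\#}$ is zero. Since $|S|=1$, the only nonzero coordinates on $\ZPLfour$ are $w_0,w_1,w_2$, and the only relevant coordinates on $\pi_1^\un(Z)_{\ge -4}$ are $f_\tau, f_\sigma, f_{\sigma\tau}$, so every $\evalu_{\PL,4}^{\#}(\Liu_k)$ is an explicit polynomial in these six variables.

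The first step is to tabulate, using (\ref{eqn:cc_li}):
\begin{align*}
\evalu_{\PL,4}^{\#}(\logu) &= w_0 f_\tau, & \evalu_{\PL,4}^{\#}(\Liu_1) &= w_1 f_\tau,\\
\evalu_{\PL,4}^{\#}(\Liu_2) &= \tfrac{1}{2}w_0 w_1 f_\tau^2, & \evalu_{\PL,4}^{\#}(\Liu_3) &= \tfrac{1}{6}w_1 w_0^2 f_\tau^3 + w_2 f_\sigma,\\
\evalu_{\PL,4}^{\#}(\Liu_4) &= \tfrac{1}{24}w_1 w_0^3 f_\tau^4 + w_0 w_2 f_{\sigma\tau}.
\end{align*}
The first relation $\Liu_2 - \tfrac{1}{2}\logu\,\Liu_1 \in \Ic^Z_{\PL,4}$ is then immediate from $\evalu_{\PL,4}^{\#}(\logu \cdot \Liu_1) = w_0 w_1 f_\tau^2 = 2\,\evalu_{\PL,4}^{\#}(\Liu_2)$.

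For the second relation, the idea is to eliminate the variable $w_2$, which is the only coordinate appearing beyond the ``principal'' term $w_1 w_0^{k-1} f_\tau^k/k!$ in $\evalu_{\PL,4}^{\#}(\Liu_k)$ for $k=3,4$. Multiplying $\Liu_4$ by $f_\sigma f_\tau$ and $\logu\,\Liu_3$ by $f_{\sigma\tau}$ produces identical $w_2$-contributions (both equal to $w_0 w_2 f_\sigma f_{\sigma\tau} f_\tau$), so the difference $f_\sigma f_\tau \Liu_4 - f_{\sigma\tau}\logu\,\Liu_3$ lies in the ideal generated by the image of $\logu$ and $\Liu_1$. Carrying out the subtraction one obtains
\[
\evalu_{\PL,4}^{\#}(f_\sigma f_\tau \Liu_4 - f_{\sigma\tau}\logu\,\Liu_3) \;=\; \frac{w_1 w_0^3 f_\tau^4}{24}\bigl(f_\sigma f_\tau - 4 f_{\sigma\tau}\bigr),
\]
and since $\evalu_{\PL,4}^{\#}((\logu)^3 \Liu_1) = w_1 w_0^3 f_\tau^4$, subtracting $\tfrac{1}{24}(\logu)^3\Liu_1 (f_\sigma f_\tau - 4f_{\sigma\tau})$ gives zero.

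There is no serious obstacle: the content is entirely in Proposition \ref{prop:brown}, which gives us the closed-form expression (\ref{eqn:cc_li}) for $\evalu_{\PL,4}^{\#}(\Liu_k)$ without any need to compute exponentials in a unipotent group. The only slightly subtle point is the choice of coefficients $f_\sigma f_\tau$ and $f_{\sigma\tau}$ needed to cancel $w_2$; once one writes out the two polynomials it is forced. Everything else is symbolic bookkeeping in the free commutative ring $\QQ[w_0,w_1,w_2,f_\tau,f_\sigma,f_{\sigma\tau}]$, and the proof reduces to verifying that the two displayed polynomials map to $0$ under $\evalu_{\PL,4}^{\#}$.
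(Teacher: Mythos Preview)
Your proposal is correct and follows essentially the same approach as the paper: tabulate $\evalu_{\PL,4}^{\#}$ on the generators via (\ref{eqn:cc_li}), read off the first relation immediately, and for the second eliminate $w_2$ by forming $f_\sigma f_\tau \Liu_4 - f_{\sigma\tau}\logu\,\Liu_3$ and then recognize the remainder as a multiple of $\evalu_{\PL,4}^{\#}((\logu)^3\Liu_1)$. The computations and the order of elimination are identical to those in the paper.
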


\subsection{Coordinates on the Galois Group for \texorpdfstring{$Z=\Zb[1/\ell]$}{Z=Z[1/l]}}\label{sec:galois_coord_ex}

In order to evaluate the functions of Proposition \ref{prop:geom_wt4}, we need to choose a prime $p \neq \ell$ and interpret $f_\tau$, $f_\sigma$, and $f_{\sigma\tau}$ in such a way that we can take (approximate) their $p$-adic periods. Essentially, this means writing them as special values of polylogarithms. As mentioned in Proposition \ref{prop:gens_A_over_ell}, we have chosen the first two to correspond to $\logu(\ell)$ and $\zetau(3)$, respectively. It remains to understand $f_{\sigma \tau}$.

This will depend on $\ell$. We start with the case $\ell=2$, in an effort to re-derive \cite[Theorem 1.16]{MTMUE}.

\subsubsection{The Case $Z=\Zb[1/2]$}

We let $A=A(Z)$ as usual. We compute using Corollary \ref{cor:motivic_polylog_coprod} and Fact \ref{fact:functional_equations} that $$\Delta'_{1,2}(\Liu_3(1/2)) = \Liu_1(1/2) \otimes (\logu(1/2))^2/2 = -\logu(1/2) \otimes (\logu(2))^2/2 = f_\tau \otimes f_\tau^2/2.$$
Using the fact that $\Delta$ is a ring homomorphism, we note that $\Delta'_{1,2}(\logu(2)^3)=3 \logu(2) \otimes \logu(2)^2$. Therefore, Corollary \ref{cor:injectivity_12} implies that $$\Liu_3(1/2) - \frac{(\logu(2))^3}{6}$$ is in $E_3$. As $E_3 = K_5(Z)_{\Qb}$ is one-dimensional, this is a rational multiple of $\zetau(3)$.

The identity in the appendix to \cite{MTMUE} says that
\begin{equation}\label{eqn:liu_3_half}\Liu_3(1/2) = \frac{(\logu(2))^3}{6} + \frac{7}{8} \zetau(3).\end{equation}

By Definition \ref{defn:brown}, we have
$$\evalu_{\PL,4}^\#(\Liu_4) = w_1 w_0^3 f_\tau^4/24 + w_0 w_2 f_{\sigma \tau}.$$

It is easy to check using the formulas at the end of Section \ref{sec:coords_cocycles_one_over_ell} that $w_0(\kappa(1/2)) = -1$ and $w_1(\kappa(1/2))=1$, and (\ref{eqn:liu_3_half}) together with Definition \ref{defn:brown} implies that $w_2(\kappa(1/2)) = 7/8$.

We therefore get $$\Liu_4(1/2) = - (\logu(2))^4/24 - \frac{7}{8} f_{\sigma \tau}.$$ It follows that $$f_{\sigma \tau} = -\frac{8}{7}\left(\frac{\logu(2)^4}{24}+\Liu_4(1/2)\right).$$

\subsubsection{Choosing $P_3(\Spec{\Zb[1/6]})$}\label{sec:choosing_p_3_1/6}

To deal with the case $Z=\Spec{\Zb[1/3]}$, we have to consider $Z'=\Spec{\Zb[1/6]}$, since $X(Z)$ is empty. In this case, the Lie algebra $\nf(Z')$ is generated by $\tau_2, \tau=\tau_3$, and $\sigma = \sigma_3$. 

In this case, $P_1(Z')$ is zero. As $P_3(Z')$ is nonzero, there is some choice in the definition of $\sigma$ as an element of $\nf(Z')$. We therefore seek to choose a set $\Pc_3(Z')$.

Before that, we deal with half-weights $1$ and $2$. $A(Z')_1$ has basis $\{\logu(2),\logu(3)\}$, and we have the following lemma for $A(Z')_2$:

\begin{lemma}\label{lemma:basis_A(Z')_2}
The set $\{\logu(2)^2,  \logu(2) \logu(3),  \logu(3)^2, \Liu_2(-2)\}$ is a basis of $A(Z')_2$.
\end{lemma}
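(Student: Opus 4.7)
The plan is to exploit the injectivity of the reduced coproduct $\Delta'_2 \colon A(Z')_2 \to A(Z')_1 \otimes A(Z')_1$ together with a direct dimension count.

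First, I would compute $\dim A(Z')_2$. Since $\pi_1^\un(Z')$ is free pro-unipotent on generators of degrees determined by $K_{2n-1}(Z')_\Qb$, and since $K_1(Z')_\Qb = \Oc(Z')^\times \otimes \Qb$ is two-dimensional (with basis $2,3$) while all higher odd K-groups over $\Qb$ vanish in even weight, the only words in $\nf(Z')$ of half-weight $-2$ are the four words $\tau_i\tau_j$ with $i,j\in\{2,3\}$. Hence $\dim A(Z')_2 = 4$, and it suffices to prove that the four proposed elements are linearly independent.

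Next, I would invoke $E_2(Z') = K_3(\Qb)_\Qb = 0$ (Proposition \ref{prop:exact_sequence}), which makes $\Delta'_2$ injective. Thus linear independence can be checked after applying $\Delta'_2$. Using Corollary \ref{cor:motivic_polylog_coprod}, Fact \ref{fact:functional_equations}, and the easy identity $\logu(-1)=0$ (which follows from $2\logu(-1)=\logu(1)=0$), I compute
\[
\Delta'(\Liu_2(-2)) = \Liu_1(-2) \otimes \logu(-2) = -\logu(3)\otimes\logu(2),
\]
while the three log-products, being shuffle squares or shuffle products of primitives, have reduced coproducts
\[
\Delta'(\logu(2)^2) = 2\logu(2)\otimes\logu(2),\quad
\Delta'(\logu(3)^2) = 2\logu(3)\otimes\logu(3),
\]
\[
\Delta'(\logu(2)\logu(3)) = \logu(2)\otimes\logu(3) + \logu(3)\otimes\logu(2).
\]

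Finally, I would read off linear independence by expressing these four images in the basis $\{\logu(i)\otimes\logu(j)\}_{i,j\in\{2,3\}}$ of $A(Z')_1\otimes A(Z')_1$, where they form a matrix that is manifestly of full rank (the three diagonal-type vectors cover $\logu(2)\otimes\logu(2)$, $\logu(3)\otimes\logu(3)$, and the symmetric combination $\logu(2)\otimes\logu(3)+\logu(3)\otimes\logu(2)$, while $\Delta'(\Liu_2(-2))$ contributes the antisymmetric direction). The main thing to be careful about is the sign and normalization in the reduced coproduct of $\Liu_2(-2)$ and the vanishing of $\logu(-1)$; after that, the linear algebra is immediate.
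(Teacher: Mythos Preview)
Your proposal is correct and follows essentially the same approach as the paper: both use that $E_2(Z')=K_3(\Qb)_\Qb=0$ to make $\Delta'_2$ injective, compute $\Delta'(\Liu_2(-2))=-\logu(3)\otimes\logu(2)$, and then verify linear independence against the images of the three log-products in the basis of $A(Z')_1\otimes A(Z')_1$. Your version is slightly more explicit (you spell out the dimension count and the coproducts of the decomposables, and you justify $\logu(-1)=0$), but the argument is the same; one small wording quibble is that your ``words in $\nf(Z')$'' should really be ``words in the generators $\tau_2,\tau_3$'' (i.e., basis elements of $\Uc\nf(Z')$ in degree $-2$), though your count of four is of course correct.
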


\begin{proof}
As $E_2(Z')=0$, the map $\Delta'_2$ is injective. Then $D_2(Z')$ is determined, and a natural basis for it is $\Dc_2(Z') = \{\logu(2)^2, \logu(2) \logu(3), \logu(3)^2\}$. Then 
\[\Delta'(\Liu_2(-2)) = \Liu_1(-2) \otimes \logu(-2) = - \logu(3) \otimes \logu(2),
\]
which is independent from $$\{\Delta' \logu(2)^2, \Delta' \logu(2) \logu(3), \Delta' \logu(3)^2\}$$ in $A(Z')_1 \otimes A(Z')_1$ (as seen by checking via the basis of $A(Z')_1 \otimes A(Z')_1$ induced by the basis $\{\logu(2),\logu(3)\}$ of $A(Z')_1$). 
\end{proof}

In fact, $\Liu_2(-2) = - f_{\tau \tau_2}$, since they have the same reduced coproduct.

\begin{prop}\label{lemma:basis_A(Z')_3}
We may take $\{\Liu_3(-2), \Liu_3(3)\}$ as $\Pc_3(Z')$, and therefore
\begin{align*}\{\logu(2)^3, \logu(3)^3, \logu(2)^2 \logu(3), \logu(2) \logu(3)^2,\\ \logu(2) \Liu_2(-2), \logu(3) \Liu_2(-2), \Liu_3(-2), \Liu_3(3),\zetau(3)\}
\end{align*}
is a basis of $A(Z')_3$.
\end{prop}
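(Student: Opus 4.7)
The plan is as follows. First, I would verify that the proposed basis has the right cardinality by computing $\dim_\Qb A(Z')_3$: since $\nf(Z')$ is free on the generators $\tau_2, \tau_3 \in \nf(Z')_{-1}$ and $\sigma_3 \in \nf(Z')_{-3}$, PBW gives a graded basis of $\Uc\nf(Z')_{-3}$ consisting of the weight-$3$ words in these generators, of which there are $2^3 + 1 = 9$; graded duality then yields $\dim A(Z')_3 = 9$. So it suffices to prove that the nine proposed elements are linearly independent.

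Since $K = \Qb$ is totally real, Corollary \ref{cor:injectivity_12} gives $\ker \Delta'_{1,2} = E_3(Z') = \Qb \cdot \zetau(3)$. As $\zetau(3)$ is one of the nine elements, the problem reduces to showing that the images under $\Delta'_{1,2}$ of the other eight elements are linearly independent in $A(Z')_1 \otimes A(Z')_2$, which has dimension $2 \cdot 4 = 8$ by Lemma \ref{lemma:basis_A(Z')_2}. For the two elements $\Liu_3(-2)$ and $\Liu_3(3)$, Corollary \ref{cor:motivic_polylog_coprod}, Fact \ref{fact:functional_equations}, and the motivic vanishing $\logu(-1) = 0$ (implicit in the proof of Lemma \ref{lemma:basis_A(Z')_2}) together give
\[
\Delta'_{1,2}(\Liu_3(-2)) = -\tfrac{1}{2}\,\logu(3) \otimes \logu(2)^2, \qquad \Delta'_{1,2}(\Liu_3(3)) = -\tfrac{1}{2}\,\logu(2) \otimes \logu(3)^2.
\]
For the six decomposables $\logu(a)\, y$ with $a \in \{2,3\}$ and $y$ ranging over the basis of $A(Z')_2$, the Hopf algebra product rule yields
\[
\Delta'_{1,2}(\logu(a)\, y) = \logu(a) \otimes y + \sum_i u_i \otimes \logu(a)\, v_i \quad \text{where } \Delta'(y) = \sum_i u_i \otimes v_i,
\]
so all six images are explicit combinations in the basis of $A(Z')_1 \otimes A(Z')_2$.

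The main work is then a routine linear algebra verification that the resulting $8 \times 8$ coefficient matrix is non-singular. This can be carried out by successive pivoting: $\Delta'_{1,2}(\logu(2)^3)$ is the only image supported on $\logu(2) \otimes \logu(2)^2$, $\Delta'_{1,2}(\logu(3)^3)$ the only one supported on $\logu(3) \otimes \logu(3)^2$, and similarly the coordinates $\logu(2) \otimes \logu(2)\logu(3)$ and $\logu(2) \otimes \Liu_2(-2)$ isolate two further decomposables; after clearing these, a small residual block cleanly separates the remaining four elements. The only real obstacle is bookkeeping --- keeping track of the various conventions for products in $A(Z')$ and avoiding off-by-factor-of-two errors between ordinary and shuffle products. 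Once the eight images are confirmed linearly independent, the nine proposed elements form a basis of $A(Z')_3$. Since the six decomposables are then a basis of the image $D_3(Z')$ of the multiplication $A_1 \otimes A_2 \to A_3$, and $\zetau(3)$ spans $E_3(Z')$, the elements $\{\Liu_3(-2), \Liu_3(3)\}$ project onto a basis of $A(Z')_3/(E_3 \oplus D_3)$ and therefore give a valid choice of $\Pc_3(Z')$.
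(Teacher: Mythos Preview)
Your proposal is correct and follows essentially the same route as the paper: both reduce via Corollary~\ref{cor:injectivity_12} to checking that $\Delta'_{1,2}$ sends the eight non-$\zetau(3)$ elements to an independent set in $A(Z')_1\otimes A(Z')_2$, compute those images explicitly, and verify non-singularity of the resulting $8\times 8$ matrix (the paper records the matrix and its determinant $9$, while you sketch a pivoting argument). Your PBW dimension count for $A(Z')_3$ is exactly what the paper invokes when it appeals to ``our knowledge of the shuffle algebra $A(Z')$.''
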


\begin{proof}
By taking all degree 3 products of basis elements of $A(Z')_1$ and $A(Z')_2$, we may determine a basis for the decomposables in $A(Z')_3$:
\[\Dc_3(Z') = \{\logu(2)^3, \logu(3)^3, \logu(2)^2 \logu(3), \logu(2) \logu(3)^2, \logu(2) \Liu_2(-2), \logu(3) \Liu_2(-2)\}.
\]

We would like to show that \begin{align*}\Bc \colonequals \{\logu(2)^3, \logu(3)^3, \logu(2)^2 \logu(3), \logu(2) \logu(3)^2,\\ \logu(2) \Liu_2(-2), \logu(3) \Liu_2(-2), \Liu_3(-2), \Liu_3(3)\}\end{align*} is a basis of $A(Z')_3/E_3(Z')$, which would imply that $\{\Liu_3(-2), \Liu_3(3)\}$ can be taken as $\Pc_3(Z')$. To do this, we apply $\Delta'_{1,2}$ to each element of $\Bc$ and expand in the basis
\begin{align*}\{\logu(2) \otimes \logu(2)^2, \logu(2) \otimes \logu(2) \logu(3), \logu(2) \otimes \logu(3)^2, \logu(2) \otimes \Li_2(-2),\\ \logu(3) \otimes \logu(2)^2, \logu(3) \otimes \logu(2) \logu(3), \logu(3) \otimes \logu(3)^2, \logu(3) \otimes \Li_2(-2)\}
\end{align*}
of $A(Z')_1 \otimes A(Z')_2$. Using Corollary \ref{cor:motivic_polylog_coprod}, this produces the matrix
\[
\begin{array}{|c|c|c|c|c|c|c|c|c|}
\hline
\logu(2) \otimes \logu(2)^2  & 3 & 0 & 0 & 0 & 0 & 0 & 0 & 0\\ \hline
\logu(2) \otimes \logu(2) \logu(3) & 0 & 0 & 2 & 0 & 0 & 0 & 0 & 0\\ \hline
\logu(2) \otimes \logu(3)^2 & 0 & 0 & 0 & 1 & 0 & 0 & 0 & -1/2\\ \hline
\logu(2) \otimes \Li_2(-2) & 0 & 0 & 0 & 0 & 1 & 0 & 0 & 0\\ \hline
\logu(3) \otimes \logu(2)^2 & 0 & 0 & 1 & 0 & -1 & 0 & -1/2 & 0\\ \hline
\logu(3) \otimes \logu(2) \logu(3) & 0 & 0 & 0 & 2 & 0 & -1 & 0 & 0\\ \hline
\logu(3) \otimes \logu(3)^2 & 0 & 3 & 0 & 0 & 0 & 0 & 0 & 0\\ \hline
\logu(3) \otimes \Li_2(-2) & 0 & 0 & 0 & 0 & 0 & 1 & 0 & 0\\ \hline
\end{array},
\]
where the columns correspond to the elements of $\Bc$. This matrix has determinant $9$, which by Corollary \ref{cor:injectivity_12} and the fact that $A(Z')_3/E_3(Z')$ is eight-dimensional (by our knowledge of the shuffle algebra $A(Z')$) implies that $\Bc$ is in fact a basis of $A(Z')_3/E_3(Z')$. As $\zetau(3)$ generates $E_3(Z')$,
\[
\Bc \cup \{\zetau(3)\}
\]
is a basis of $A(Z')_3$, as desired.
\end{proof}

From now on, we choose $P_3(Z')$ to be the space generated $\Liu_3(-2)$ and $\Liu_3(3)$, and $\sigma$ such that it pairs to $0$ with this choice of $P_3(Z')$.

\subsubsection{The Case $Z=\Zb[1/3]$}\label{sec:case_1/3}

Armed with our choice of $\sigma \in \nf(Z')$, we seek to write $f_{\sigma \tau} \in A(Z)$ as an explicit combination of motivic polylogarithms. With our choice of $\sigma$, we have $w_2(-2)=w_2(3)=0$ because $\Liu_3(-2)$ and $\Liu_3(-3)$ are elements of $\Pc_3$.

From the matrix above, we see that
\[
\Delta'_{1,2} \Liu_3(3) = - \frac{\logu(2) \otimes \logu(3)^2}{2}.
\]
We also compute
\[
\Delta'_{1,2} \Liu_3(9) = \Liu_1(9) \otimes \frac{\logu(9)^2}{2} = - 6 \logu(2) \otimes \logu(3)^2,
\]
so $\Liu_3(9) = 12 \Liu_3(3)$ is in $E_3(Z')$, hence a rational multiple of $\zetau(3)$ (because $E_3(Z') = K_5(Z')_{\Qb}$ is one-dimensional). In fact, since $w_2(3)=0$, this rational number is the $f_{\sigma}$-coordinate of $\Liu_3(9)$ in the basis $(f_w)_w$ of $A(Z')$, hence it equals $w_2(9)$ because $f_\sigma = \zetau(3)$.

Numerical computation using the code \cite{ChatDC} in the $5$-adic and $7$-adic realizations suggests that $$w_2(9) = \frac{\Liu_3(9)-12\Liu_3(3)}{\zetau(3)} = -\frac{26}{3}.$$ This is proven in the Appendix (Section \ref{sec:appendix}).

We may now compute using Definition \ref{defn:brown} that
\[
\Liu_4(3) = w_2(3) \phi^{\tau}_{e_0}(3) f_{\sigma \tau} + \phi^{\tau_2}_{e_1}(3) \phi^{\tau}_{e_0}(3)^3 f_{\tau_2 \tau \tau \tau}\\
= -f_{\tau_2 \tau \tau \tau}
\]
and
\[
\Liu_4(9) = w_2(9) \phi^{\tau}_{e_0}(9) f_{\sigma \tau} + \phi^{\tau_2}_{e_1}(9) \phi^{\tau}_{e_0}(9)^3 f_{\tau_2 \tau \tau \tau}\\
= 2 w_2(9) f_{\sigma \tau} - 24 f_{\tau_2 \tau \tau \tau} 
\]




This implies that
$$
-\frac{12}{w_2(9)} \Liu_4(3) + (2 w_2(9))^{-1} \Liu_4(9) =
\frac{18}{13} \Liu_4(3) - \frac{3}{52} \Liu_4(9)
=
f_{\sigma \tau}.
$$

We note that this corresponds precisely to $f_{\sigma \tau}$ in the image of $\Oc(\pi_1^\un(Z)) \hookrightarrow \Oc(\pi_1^\un(Z'))$.

Plugging this into the second equation of Proposition \ref{prop:geom_wt4}, we find
\begin{thm}\label{thm:computation_wt4}
The element
\begin{align*}
\zetau(3) \logu(3) \Liu_4 - \left(\frac{18}{13} \Liu_4(3) - \frac{3}{52} \Liu_4(9)\right) \logu \Liu_3 \\
- \frac{(\logu)^3 \Liu_1}{24} \left(\zetau(3) \logu(3) - 4 \left(\frac{18}{13} \Liu_4(3) - \frac{3}{52} \Liu_4(9)\right)\right)
\end{align*}
of $\Oc(\Pi_{\PL,4} \times \pi_1^\un(Z))$ is in $\Ic^Z_{\PL,4}$ for $Z=\Spec{\Zb[1/3]}$.
\end{thm}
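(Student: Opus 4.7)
The plan is to reduce the theorem to Proposition \ref{prop:geom_wt4}, which already provides a candidate element of $\Ic^Z_{\PL,4}$ for every $Z = \Spec{\Zb[1/\ell]}$, namely
\[
f_\sigma f_\tau \Liu_4 - f_{\sigma\tau}\logu\Liu_3 - \frac{(\logu)^3\Liu_1}{24}\bigl(f_\sigma f_\tau - 4 f_{\sigma\tau}\bigr).
\]
Specialize to $\ell = 3$. By Proposition \ref{prop:gens_A_over_ell}, the generators $\tau = \tau_3$ and $\sigma = \sigma_3$ of $\nf(Z)$ can be chosen so that $f_\tau = \logu(3)$ and $f_\sigma = \zetau(3)$. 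Substituting these into the formula above, the theorem reduces to identifying $f_{\sigma\tau}$ with $\tfrac{18}{13}\Liu_4(3) - \tfrac{3}{52}\Liu_4(9)$ as an element of $A(Z)$.

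The first obstacle is that $X(Z) = \emptyset$, so we cannot hope to express $f_{\sigma\tau}$ directly through polylogarithm values at $Z$-integral points. The strategy is to pass to the larger scheme $Z' = \Spec{\Zb[1/6]}$, interpret $f_{\sigma\tau}$ as living in $A(Z) \subseteq A(Z')$, and exploit the $Z'$-integral points $-2$, $3$, and $9$. To pin down $\sigma$ unambiguously as an element of $\nf(Z')$, one chooses $\Pc_3(Z') = \{\Liu_3(-2),\Liu_3(3)\}$ as in Proposition \ref{lemma:basis_A(Z')_3}; this uniquely determines $\sigma$ by Proposition \ref{prop:gens_A_over_ell} and, crucially, forces $w_2(-2) = w_2(3) = 0$ via Definition \ref{defn:brown}.

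The genuinely hard step is pinning down $w_2(9)$. Because $\Liu_3(9) - 12\Liu_3(3)$ is shown to lie in $E_3(Z')$ (by a reduced coproduct calculation and Corollary \ref{cor:injectivity_12}), it is a rational multiple of $\zetau(3)$, and under our choice of $\sigma$ this multiple is precisely $w_2(9)$. I would defer the explicit value $w_2(9) = -\tfrac{26}{3}$ to the functional-equation identity proven in the Appendix, treating it here as a black box; numerical $p$-adic verification via the algorithm of \cite{ChatDC} gives corroborating evidence, but rigor requires the closed-form identity.

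Given these inputs, the endgame is a short linear algebra calculation in $A(Z')_4$. Applying Definition \ref{defn:brown} to $\Liu_4(3)$ and $\Liu_4(9)$ — the only nonzero contributions come from words of the form $\sigma \tau$ and $\tau_2 \tau^3$ — yields
\begin{align*}
\Liu_4(3) &= -\,f_{\tau_2\tau\tau\tau},\\
\Liu_4(9) &= 2w_2(9)f_{\sigma\tau} - 24\,f_{\tau_2\tau\tau\tau}.
\end{align*}
Substituting $w_2(9) = -26/3$ and eliminating $f_{\tau_2\tau\tau\tau}$ produces $f_{\sigma\tau} = \tfrac{18}{13}\Liu_4(3) - \tfrac{3}{52}\Liu_4(9)$. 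A brief consistency remark closes the argument: this combination, although manifestly defined over $Z'$, in fact lies in the subring $A(Z) \subseteq A(Z')$ since $f_{\sigma\tau}$ was defined there intrinsically, so the expression is unramified at $2$ despite its individual summands not being so. Plugging this identification back into Proposition \ref{prop:geom_wt4} yields the stated element of $\Ic^Z_{\PL,4}$.
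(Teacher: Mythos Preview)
Your proposal is correct and follows essentially the same route as the paper: reduce to Proposition~\ref{prop:geom_wt4}, pass to $Z'=\Spec{\Zb[1/6]}$ with the choice $\Pc_3(Z')=\{\Liu_3(-2),\Liu_3(3)\}$ so that $w_2(3)=0$, invoke the Appendix for $w_2(9)=-\tfrac{26}{3}$, compute $\Liu_4(3)$ and $\Liu_4(9)$ in the shuffle basis via Proposition~\ref{prop:brown}, and solve for $f_{\sigma\tau}$. The paper's argument is organized identically, with the same intermediate identities and the same closing remark that the resulting expression lies in $A(Z)\subseteq A(Z')$.
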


For $p \neq 2,3$, the corresponding Coleman function is

\begin{align}\label{eqn:the_function_p}
\zetap(3) \logp(3) \Lip_4(z) - \left(\frac{18}{13} \Lip_4(3) - \frac{3}{52} \Lip_4(9)\right) \logp(z) \Lip_3(z) \\
- \frac{(\logp(z))^3 \Lip_1(z)}{24} \left(\zetap(3) \logp(3) - 4 \left(\frac{18}{13} \Lip_4(3) - \frac{3}{52} \Lip_4(9)\right)\right). \nonumber
\end{align}

\subsection{The Chabauty-Kim Locus for \texorpdfstring{$\Zb[1/3]$}{Z[1/3]} in Half-Weight \texorpdfstring{$4$}{4}}\label{sec:CK_locus_1/3}

As noted in \cite[\S 8.2]{nabsd}, the function $\Lip_2(z)-\frac{1}{2} \logp(z) \logp(1-z)$ has the zero set $\{2,\frac{1}{2},-1\}$ for $p=3,5,7$. By numerical evaluation of (\ref{eqn:the_function_p}) at $2$, $\frac{1}{2}$, and $-1$ using \cite{ChatDC}, we conclude that $2, \frac{1}{2} \notin X(Z_{\pff})_{\PL,4}$. It follows that:

\begin{thm}\label{thm:computation_loci}
For $Z = \Spec{\Zb[1/3]}$, we have $X(Z_{\pff})_{\PL,4} \subseteq \{-1\}$ for $p=5,7$.
\end{thm}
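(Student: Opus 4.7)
The plan is to combine the two elements of $\Ic^Z_{\PL,4}$ we already have in hand: the half-weight two function coming from the first line of Proposition \ref{prop:geom_wt4}, and the half-weight four function of Theorem \ref{thm:computation_wt4}, whose associated Coleman function on $X(Z_{\pff})$ is the explicit expression (\ref{eqn:the_function_p}). Since the Chabauty-Kim locus $X(Z_{\pff})_{\PL,4}$ is, by Definition \ref{defn:CK_locus}, the common zero set of the restrictions $f\restriction_{X(Z_{\pff})}$ for all $f \in \Oc(\Pi_{\PL,4} \times \pi_1^\un(Z)) \cap \Ic^Z_{\PL,4}$, it suffices to intersect the two zero sets cut out by these two particular elements.

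First I would recall that applying the Coleman realization to $\Liu_2 - \tfrac{1}{2}\logu \Liu_1$ and using $\Liu_1 = -\logu(1-z)$ (Fact \ref{fact:functional_equations}) produces the function $\Lip_2(z) - \tfrac{1}{2}\logp(z)\logp(1-z)$ on $X(Z_{\pff})$. By the result of \cite[\S 8.2]{nabsd} quoted just before the theorem, the zero set of this Coleman function in $X(Z_{\pff})$ for $p = 5, 7$ is exactly $\{2, \tfrac{1}{2}, -1\}$. Consequently
\[
X(Z_{\pff})_{\PL,4} \subseteq \{2,\tfrac{1}{2},-1\}
\]
for $p = 5, 7$.

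The remaining step is to rule out $2$ and $\tfrac{1}{2}$ by evaluating the Coleman function (\ref{eqn:the_function_p}) at these two points for $p = 5$ and $p = 7$, and checking that the values are nonzero. This is a finite numerical calculation: each evaluation requires computing (to sufficient $p$-adic precision) the values $\logp(3)$, $\zetap(3)$, $\Lip_3(3)$, $\Lip_4(3)$, $\Lip_4(9)$, together with $\logp(z)$, $\Lip_1(z)$, $\Lip_3(z)$, $\Lip_4(z)$ for $z \in \{2,\tfrac{1}{2}\}$, and substituting them into the explicit polynomial. I would carry out these evaluations using the software package \cite{ChatDC}, exactly as was done earlier in Section \ref{sec:case_1/3} for the rational coefficient $w_2(9)$.

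The only real subtlety is precision: to conclude that the computed value is genuinely nonzero rather than just nonzero to the computed order, one must ensure that the $p$-adic valuation of the numerically computed value is strictly less than the working precision. In practice this is unproblematic (the values come out of the same order of magnitude as the known periods), so the obstacle is bookkeeping rather than mathematical depth. Once the four numerical evaluations are recorded as nonzero, we conclude $2, \tfrac{1}{2} \notin X(Z_{\pff})_{\PL,4}$, yielding $X(Z_{\pff})_{\PL,4} \subseteq \{-1\}$ for $p = 5, 7$.
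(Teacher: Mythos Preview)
Your proposal is correct and follows essentially the same approach as the paper: first use the half-weight two function to restrict to $\{2,\tfrac{1}{2},-1\}$ via \cite[\S 8.2]{nabsd}, then numerically evaluate the Coleman function (\ref{eqn:the_function_p}) at these points using \cite{ChatDC} to rule out $2$ and $\tfrac{1}{2}$. The paper's argument is exactly this, stated in one sentence in the paragraph preceding the theorem.
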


\section{Insufficiency of Polylogarithmic Quotient and \texorpdfstring{$S_3$}{S3}-Symmetrization}\label{sec:symmetrization}

We first recall some notation from Section \ref{sec:coord_cocyc}. For a family $\Sigma$ of generators of the Lie algebra $\nf(Z)$, we have a corresponding shuffle (vector space) basis of $A(Z)$ denoted by $f_w$ for $w$ a word in $\Sigma$. As well, $\Oc(\pi_1^\un(X))$ has a basis of elements $\Liu_{\lambda}$ corresponding to words $\lambda$ in $\{e_0,e_1\}$. We defined regular functions $\phi_{\lambda}^w$ on the space $\ZPLn$ of cocycles, for $w$ a word in $\Sigma$ and $\lambda$ a word in $\{e_0,e_1\}$ of length at most $n$, such that for $c \in \ZPLn(R)$, we have
\[
\Liu_{\lambda}(c) = \sum_{w} \phi_{\lambda}^w(c) f_w.
\]

We also recall some notation from Section \ref{sec:abs_over_S}. Letting $Z=\Zb[1/\ell]$, we fix generators $(\tau=\tau_{\ell})$ and $(\sigma_{2n+1})_{n \ge 1}$ for $\nf(Z)$, such that $f_{\tau}=\logu(\ell)$, and $f_{\sigma_{2n+1}} = \zetau(2n+1)$. Finally, we set
\[
w_0 \colonequals \phi^{\tau_\ell}_{e_0}
\]
\[
w_1 \colonequals \phi^{\tau_\ell}_{e_1}
\]
\[
w_i \colonequals  \phi^{\sigma_{2i-1}}_{\underbrace{ \footnotesize{e_1 e_0 \cdots e_0}}_{2i-1}} \hspace{0.5cm} 2 \le i \le \left\lceil \frac{n}{2} \right\rceil.
\]

This section does not depend on the material in Sections \ref{sec:geometric_step}-\ref{sec:CK_locus_1/3}.

\subsection{Proof that \texorpdfstring{$-1 \in X(Z_{\pff})_{\PL,n}$}{-1 is in X(Zp)(PL,n)} for \texorpdfstring{$Z=\Spec{\Zb[1/\ell]}$}{Z=Spec(Z[1/l])}}

We fix a set of generators as in Section \ref{sec:abs_over_S}.

We first need the following lemma:

\begin{lemma}\label{lemma:lip_identity}
We have $\Lip_k(-1)=0$ for $k \ge 2$ even.
\end{lemma}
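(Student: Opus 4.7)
The plan is to prove the stronger \emph{motivic} identity $\Liu_k(-1) = 0$ in $A(\Zb[1/2])$, from which the $p$-adic statement follows by applying the period map.

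The key observation is that $\logu(-1) = 0$: since $\logu$ is a group homomorphism by Fact \ref{fact:functional_equations}, the identity $2\logu(-1) = \logu((-1)^2) = \logu(1) = 0$ forces the vanishing. Noting that $-1 \in X(\Zb[1/2])$, the motivic polylogarithm $\Liu_k(-1)$ is a well-defined element of $A(\Zb[1/2])_k$, and by Corollary \ref{cor:motivic_polylog_coprod} we have
\[
\Delta' \Liu_k(-1) = \sum_{i=1}^{k-1} \Liu_{k-i}(-1) \otimes \frac{(\logu(-1))^{\Sha i}}{i!} = 0,
\]
so $\Liu_k(-1)$ is primitive, i.e.\ lies in $E_k(\Zb[1/2])$.

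By Proposition \ref{prop:exact_sequence}, $E_k(\Zb[1/2]) = K_{2k-1}(\Zb[1/2])_{\Qb}$, which for $k \ge 2$ coincides with $K_{2k-1}(\Qb)_{\Qb}$ (the localization from $\Zb$ only affects $K_1$, as recalled in Section \ref{sec:mixed_tate_fund_grp}) and vanishes for $k$ even by Borel's theorem, since $\Qb$ is totally real. Hence $\Liu_k(-1) = 0$ in $A(\Zb[1/2])$, and applying the period map $\mathrm{per}_p \colon A(\Zb[1/2]) \to \Qp$ yields $\Lip_k(-1) = 0$ for every odd prime $p$.

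The only remaining nuisance is the case $p=2$, for which $\mathrm{per}_2$ is not defined on $A(\Zb[1/2])$; this is the main obstacle. I would dispose of it using the Coleman distribution relation $\Lip_k(z^2) = 2^{k-1}(\Lip_k(z) + \Lip_k(-z))$, specialized at $z=1$: this yields $(1 - 2^{k-1})\Lip_k(1) = 2^{k-1}\Lip_k(-1)$, and $\Lip_k(1) = \mathrm{per}_p(\zetau(k)) = 0$ for every prime $p$ (now working motivically in $A(\Zb)$, where $E_k = K_{2k-1}(\Zb)_{\Qb} = 0$ for $k$ even), giving $\Lip_k(-1) = 0$ in the remaining case.
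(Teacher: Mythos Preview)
Your argument is correct, and it takes a genuinely different route from the paper's proof. The paper argues entirely at the $p$-adic level: it invokes Coleman's distribution relation $2^{-k+1}\Lip_k(z^2) = \Lip_k(z) + \Lip_k(-z)$ at $z=1$ to get $\Lip_k(-1) = (2^{-k}-1)\zetap(k)$, and then cites the standard vanishing $\zetap(k)=0$ for even $k$. This is short and uniform in $p$.

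Your main argument instead proves the stronger motivic identity $\Liu_k(-1)=0$ in $A(\Zb[1/2])$ (using $\logu(-1)=0$, Corollary~\ref{cor:motivic_polylog_coprod}, and Borel's computation $E_k(\Zb[1/2])=0$ for even $k$), and then descends via $\mathrm{per}_p$. This is conceptually cleaner and yields more than the lemma asks for. The cost is the non-uniformity: $\mathrm{per}_2$ is unavailable on $A(\Zb[1/2])$, and your workaround for $p=2$ is precisely the paper's distribution-relation argument, so in the end you have essentially written both proofs. A minor point: Fact~\ref{fact:functional_equations} is stated for $z,w\in X(Z)$, whereas you apply it to $(-1)\cdot(-1)=1\notin X$; the cleaner justification for $\logu(-1)=0$ is the paper's remark that $\logu$ is pulled back from $\Gm$, so it is a homomorphism on $\Gm(Z)$ into a $\Qb$-vector space and must kill the torsion element $-1$.
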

\begin{proof}
This follows from the identity $2^{-k+1} \Lip_k(z^2) = \Lip_k(z)+\Lip_k(-z)$, which is \cite[Proposition 6.1]{Coleman} for $m=2$. Indeed, setting $z=1$ in the identity shows $\Lip_k(-1) = (2^{-k}-1) \Lip_k(1)$, and since $\Lip_k(1) = \zetap(k)$, which is $0$ for $k$ even, we have $\Lip_k(-1)=0$.
\end{proof}

\begin{thm}\label{thm:counterexample}
For any prime $\ell$ and positive integer $n$, we have
\[
-1 \in X(Z_{\pff})_{\PL,n},
\]
where $Z = \Spec{\Zb[1/\ell]}$.
\end{thm}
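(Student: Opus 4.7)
The plan is to exhibit a $\Qp$-point $c$ of $\ZPLn$ whose image under $\mathrm{loc}_{\PL,n}$ is exactly $\kappa_{\pff}(-1)$. Once this is done, every $f \in \Ic^Z_{\PL,n} \cap \Oc(\Pi_{\PL,n} \times \pi_1^\un(Z))$ vanishes on $\evalu_{\PL,n}(c, \mathrm{per}_{\pff}) = (\kappa_{\pff}(-1), \mathrm{per}_{\pff})$ by definition of $\Ic^Z_{\PL,n}$, and hence $f\restriction_{X(Z_{\pff})}(-1) = 0$, which is precisely the condition for $-1$ to lie in $X(Z_{\pff})_{\PL,n}$.

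To construct $c$, I would use Corollary \ref{cor:psi_isom} to identify $\ZPLn$ with an affine space whose coordinates are the $\phi^\rho_\lambda$ of Proposition \ref{prop:brown}. For $Z = \Spec{\Zb[1/\ell]}$ the free coordinates (in half-weight $\le n$) are $\phi^\tau_{e_0}, \phi^\tau_{e_1}$ and $\phi^{\sigma_k}_{e_1 e_0 \cdots e_0}$ for each odd $k$ with $3 \le k \le n$. Set $\phi^\tau_{e_0}(c) := 0$, $\phi^\tau_{e_1}(c) := -\logp(2)/\logp(\ell)$, and $\phi^{\sigma_k}_{e_1 e_0 \cdots e_0}(c) := 2^{1-k}-1$. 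The denominator $\logp(\ell)$ is nonzero because any root of unity in $\Qp$ lies in $\mu_{p-1}$ (or $\{\pm 1\}$ for $p=2$), and no rational prime $\ell \neq p$ is such a root of unity.

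Now apply formula (\ref{eqn:cc_li}) to compute $\mathrm{per}_{\pff} \circ c^\sharp$. Because $w_0 = \phi^\tau_{e_0}(c) = 0$, the image of $\logu$ is $0 = \logp(-1)$; for each even $k \ge 2$ every term in (\ref{eqn:cc_li}) carries a positive power of $w_0$, so $\Liu_k$ maps to $0$, matching $\Lip_k(-1) = 0$ from Lemma \ref{lemma:lip_identity}. For $k=1$ one gets $\phi^\tau_{e_1}(c)\,\logp(\ell) = -\logp(2) = \Lip_1(-1)$ by Fact \ref{fact:functional_equations}. For odd $k \ge 3$, only the top term survives in (\ref{eqn:cc_li}), yielding $(2^{1-k}-1)\zetap(k)$. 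This equals $\Lip_k(-1)$ by the identity $\Lip_k(-1) = (2^{1-k}-1)\zetap(k)$, which is obtained by specializing $z=1$ in Coleman's distribution formula $2^{1-k}\Lip_k(z^2) = \Lip_k(z) + \Lip_k(-z)$ exactly as in the proof of Lemma \ref{lemma:lip_identity}, without setting $\zetap$-values in even weight to zero.

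The proof is essentially a coordinate computation with no serious obstacle: the only content is the observation that the vanishing properties $\logp(-1)=0$ and $\Lip_{2m}(-1)=0$ make $\kappa_{\pff}(-1)$ compatible with the image of a cocycle having $w_0 = 0$, and that the remaining odd-weight coordinates $\Lip_{2m+1}(-1)$ are expressible as rational multiples of $\zetap(2m+1)$, hence realizable by choosing the $\phi^{\sigma_k}$-coordinates appropriately. The mild point that requires a brief argument is the nonvanishing of $\logp(\ell)$, for which the roots-of-unity argument above suffices.
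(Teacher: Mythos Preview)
Your proof is correct and takes essentially the same approach as the paper: construct a cocycle with $w_0 = 0$ and the remaining free coordinates chosen so that its image under localization equals $\kappa_{\pff}(-1)$, then use that elements of $\Izn \cap \Oc(\Pi_{\PL,n}\times\pi_1^\un(Z))$ vanish on the image of $\evalu_{\PL,n}$ (hence of $\mathrm{loc}_{\PL,n}$). The only difference is that the paper works over the function field $\Kc$ of $\pi_1^\un(Z)$ with coordinate values written as motivic ratios such as $\Liu_{2i-1}(-1)/\zetau(2i-1)$, whereas you work directly over $\Qp$ and use the explicit rational values $2^{1-k}-1$ coming from Coleman's distribution relation; this is a slightly more direct packaging of the same computation (and, as the paper's Remark~\ref{rem:regular_p} observes, it incidentally avoids any hypothesis on nonvanishing of $\zetap(k)$).
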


\begin{proof}
We use the coordinates of Section \ref{sec:abs_over_S}, and we let $\Kc$ and $\Izn$ be as defined in Section \ref{sec:the_geometric_step}. We also write $\tau = \tau_\ell$. We have $f_\tau = \logu(\ell)$.

To prove the theorem, we produce an element $c_{-1}$ of $\ZPLn(\Kc)$ whose image $\alpha_{-1}$ under $\evalu_{\PL,n}$ lies in $\Pi_{\PL,n}(A(Z)) \subseteq \Pi_{\PL,n}(\Kc)$ and satisfies 
\[\mathrm{per}_{\pff}(\alpha_{-1}) = \kappa_{\pff}(-1) \in \Pi_{\PL,n}(\Qp).\]
Since any element $f$ of $\Oc(\Pi_{\PL,n} \times \pi_1^\un(Z)) \cap \Izn$ vanishes on the image of $\evalu^{\Kc}_{\PL,n}$, we have that $f$ vanishes on $\alpha_{-1}$ and therefore on $\kappa_{\pff}(-1)$, which proves the theorem.

We define $c_{-1}$ by setting
\[w_0(c_{-1}) \colonequals 0,\]
\[w_1(c_{-1})  \colonequals \frac{\Liu_1(-1)}{\logu(\ell)},\]
\[w_i(c_{-1}) \colonequals \frac{\Liu_{2i-1}(-1)}{\zetau(2i-1)} \hspace{0.5cm} i = 2, \cdots, \left\lceil \frac{n}{2} \right\rceil.\]

We let $\alpha_{-1} \colonequals \evalu_{\PL,n}(c_{-1})$. To show that
\[\mathrm{per}_{\pff}(\alpha_{-1}) = \kappa_{\pff}(-1) \in \Pi_{\PL,n}(\Qp),\]
we note that $\Oc(\Pi_{\PL,n})$ is generated as an algebra by $\{\logu,\Liu_1,\cdots,\Liu_n\}$. Therefore, it suffices to show that
\[
\mathrm{per}_{\pff}(\logu(\alpha_{-1})) = \mathrm{per}_{\pff}(\logu(-1)),
\]
and that
\[
\mathrm{per}_{\pff}(\Liu_k(\alpha_{-1})) = \mathrm{per}_{\pff}(\Liu_k(-1))
\]
for $1 \le k \le n$. 

We have $\evalu_{\PL,n}^{\#}(\Liu_1)= w_1 f_\tau$, so $$\Liu_1(\alpha_{-1}) = w_1(c_{-1}) f_\tau = \frac{\Liu_1(-1)}{\logu(\ell)} \logu(\ell) = \Liu_1(-1).$$

For $k \ge 3$ odd, we have by (\ref{eqn:cc_li}) that
\begin{eqnarray*}\evalu^\#_{\PL,n}(\Liu_k) &=& w_1 w_0^{k-1} f_{\tau}^k/k! + \sum_{i=2}^{\frac{k+1}{2}} w_i w_0^{k-2i+1} f_{\sigma_{2i-1} \tau^{k-2i+1}}\\
&=& w_0\left(w_1 w_0^{k-1} f_{\tau}^k/k! + \sum_{i=2}^{\frac{k-1}{2}} w_i w_0^{k-2i} f_{\sigma_{2i-1} \tau^{k-2i+1}}\right) + w_{\frac{k+1}{2}} f_{\sigma_k},
\end{eqnarray*}
so by $w_0(c_{-1})=0$, we have
\begin{eqnarray*}
\Liu_k(\alpha_{-1}) &=& w_{\frac{k+1}{2}}(c_{-1}) f_{\sigma_k}\\
&=& \frac{\Liu_{k}(-1)}{\zetau(k)} \zetau(k)\\
&=& \Liu_k(-1).
\end{eqnarray*}

Thus for $k$ odd, we have $\Liu_k(\alpha_{-1})=\Liu_k(-1)$, so
\[
\mathrm{per}_{\pff}(\Liu_k(\alpha_{-1})) = \mathrm{per}_{\pff}(\Liu_k(-1))
\]
for $k$ odd, as desired.

For $k \ge 2$ even, we have by (\ref{eqn:cc_li}) that
\begin{align*}
\evalu^\#_{\PL,n}(\Liu_k) = w_1 w_0^{k-1} f_{\tau}^k/k! + \sum_{i=2}^{ \frac{k}{2}} w_i w_0^{k-2i+1} f_{\sigma_{2i-1} \tau^{k-2i+1}}\\ = w_0\left(w_1 w_0^{k-1} f_{\tau}^k/k! + \sum_{i=2}^{\frac{k}{2}} w_i w_0^{k-2i} f_{\sigma_{2i-1} \tau^{k-2i+1}}\right),
\end{align*}
and since $w_0(c_{-1})=0$, we have $\Liu_k(\alpha_{-1}) = 0$. We also have $\evalu_{\PL,n}^{\#}(\logu) = w_0 f_\tau$, so $$\logu(\alpha_{-1}) = w_0(c_{-1}) f_{\tau} = 0.$$

To show that $\mathrm{per}_{\pff}(\alpha_{-1}) = \kappa_{\pff}(-1)$, we are hence reduced to showing that $\mathrm{per}_{\pff}(\Liu_k(-1))=0$ for $k$ even, and that $\mathrm{per}_{\pff}(\logu(-1))=0$. By Lemma \ref{lemma:lip_identity}, $\Lip_k(-1)=0$ for $k \ge 2$ even, and since $\mathrm{per}_{\pff}(\Liu_k(-1))=\Lip_k(-1)$, this implies that $\mathrm{per}_{\pff}(\Liu_k(-1))=0$ for $k \ge 2$ even. We also have $\mathrm{per}_{\pff}(\logu(-1)) = \logp(-1)=0$. 

Finally, recall that we needed to show that $\alpha_{-1} \in \Pi_{\PL,n}(A(Z)) \subseteq \Pi_{\PL,n}(\Kc)$. This follows because we have shown that $\logu(\alpha_{-1})=0 \in A(Z)$, $\Liu_k(\alpha_{-1})=0 \in A(Z)$ for $k$ even, and $\Liu_k(\alpha_{-1}) = \Liu_k(-1) \in A(Z)$ for $k$ odd.


\end{proof}

\begin{rem}\label{rem:regular_p}
Recall that by Remark \ref{rem:kimvsus}, the $p$-adic period conjecture (Conjecture \ref{conj:period_conj}) implies that our locus is the same as Kim's locus $X(Z_{\pff})_{n,\Kim}^Z$. Similarly, one may define a polylogarithmic version $X(Z_{\pff})_{n,\PL,\Kim}^Z$ of Kim's locus, which is contained in our locus $X(Z_{\pff})_{n,\PL}$ and is equal if the $p$-adic period conjecture is true.

In fact, one could modify the proof of Theorem \ref{thm:counterexample} to directly prove that $-1 \in X(Z_{\pff})_{n,\PL,\Kim}^Z$ as long as $\zetap(2i-1) \neq 0$ for $2 \le i \le \left\lceil \frac{n}{2} \right\rceil$. This latter fact is known whenever $p$ is a regular prime (c.f. \cite[Remark 2.20(i)]{FurushoI}).
\end{rem}

\subsection{\texorpdfstring{$S_3$}{S3}-Symmetrization}


We recall our strengthening of Conjecture \ref{conj:kim}. The $S_3$-action on $X$ allows us to define $X(Z_{\pff})_{\PL,n}^{S_3} \colonequals \bigcap_{\sigma \in S_3} \sigma(X(Z_{\pff})_{\PL,n})$. Our symmetrized conjecture is that
\begin{conj}\label{conj:polylog_kim_symm2}
$X(Z) = X(Z_{\pff})_{\PL,n}^{S_3}$ for sufficiently large $n$.
\end{conj}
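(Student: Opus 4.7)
The inclusion $X(Z) \subseteq X(Z_{\pff})_{\PL,n}^{S_3}$ comes essentially for free. The $S_3$-action on $X = \Pb^1 \setminus \{0,1,\infty\}$ preserves $Z$-integral points since $S_3$ permutes the three removed points and sends $\Oc(Z)^{\times}$ to itself, so $X(Z)$ is $S_3$-stable; combined with the standard containment $X(Z) \subseteq X(Z_{\pff})_{\PL,n}$ (via commutativity of Kim's cutter) and intersecting over $\sigma \in S_3$, this yields the easy direction.

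The reverse inclusion $X(Z_{\pff})_{\PL,n}^{S_3} \subseteq X(Z)$ is the content of the conjecture and is open in general. My plan for verifying it in the specific case $Z = \Spec{\Zb[1/3]}$ with $n = 4$ and $p \in \{5,7\}$ is to combine the ingredients already assembled. First, Theorem \ref{thm:computation_wt4} produces an explicit element of $\Ic^Z_{\PL,4}$ whose $p$-adic Coleman realization, together with $\Liu_2 - \tfrac{1}{2}\logu \Liu_1$, yields by numerical computation (as in Theorem \ref{thm:computation_loci}) the inclusion $X(Z_{\pff})_{\PL,4} \subseteq \{-1\}$, with equality by Theorem \ref{thm:counterexample}. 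Next, one checks that the $S_3$-orbit of $-1$ in $X$ is $\{-1, 2, 1/2\}$, the stabilizer being generated by $z \mapsto 1/z$, so the three distinct translates $\sigma(X(Z_{\pff})_{\PL,4}) \in \{\{-1\},\{2\},\{1/2\}\}$ have empty intersection. Thus $X(Z_{\pff})_{\PL,4}^{S_3} = \emptyset$, which matches $X(\Zb[1/3]) = \emptyset$ (since no $u \in \Zb[1/3]^{\times} = \{\pm 3^a\}$ has $1-u$ also a unit), verifying the conjecture in this instance.

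The main obstacle in attacking the general conjecture is twofold. First, one needs a supply of polylogarithmic identities rich enough that, after $S_3$-symmetrization, they cut out all non-integral common zeros, which is tightly bound to Goncharov-type conjectures on the depth filtration of $\pi_1^\un(Z)$ and to the Naive Hope of Section \ref{sec:gen_az}. Second, Theorem \ref{thm:counterexample} shows that the polylogarithmic quotient cannot by itself eliminate $-1$, so the $S_3$-symmetrization is essential; proving that it is also sufficient in bounded weight seems to require new ideas beyond the present framework, and at present case-by-case verification via the geometric step followed by $p$-adic numerical computation appears to be the only available route.
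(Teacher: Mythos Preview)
The statement is a conjecture, not a theorem, and the paper does not prove it; your proposal correctly recognizes this by separating the trivial containment $X(Z)\subseteq X(Z_{\pff})_{\PL,n}^{S_3}$ from the open reverse inclusion, and then outlining the verification of the single known case $Z=\Spec{\Zb[1/3]}$, $n=4$, $p\in\{5,7\}$. Your argument for that case is essentially the paper's proof of Theorem~\ref{thm:verification_computation}: from $X(Z_{\pff})_{\PL,4}\subseteq\{-1\}$ (Theorem~\ref{thm:computation_loci}) and the fact that $-1$ is not an $S_3$-fixed point, the symmetrized locus is empty, matching $X(\Zb[1/3])=\emptyset$. Your extra observation that equality $X(Z_{\pff})_{\PL,4}=\{-1\}$ follows from Theorem~\ref{thm:counterexample} is correct but unnecessary for the verification, as is the explicit computation of the orbit $\{-1,2,1/2\}$; the paper simply uses that $-1$ is not fixed.
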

As shown in Proposition \ref{prop:different_conjs}, this conjecture implies Conjecture \ref{conj:kim} and hence Conjecture \ref{conj:kim1}, the latter of which appeared in \cite{nabsd}.

\subsubsection{Verification for $Z=\Spec{\Zb[1/3]}$}

We can use our computations in Section \ref{sec:ex_wt4} to verify a case of this conjecture:

\begin{thm}\label{thm:verification_computation}
For $Z=\Spec{\Zb[1/3]}$ and $p=5,7$, Conjecture \ref{conj:polylog_kim_symm2} (and hence Conjecture \ref{conj:kim}) holds (with $n=4$).
\end{thm}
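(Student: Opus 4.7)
The plan is to chain together Theorem~\ref{thm:computation_loci} with the elementary geometry of the $S_3$-orbit of $-1$. Recall from the discussion after Theorem~\ref{thm:counterexample1} that $X(\Zb[1/3]) = \emptyset$ (since $3$ is odd). Hence Conjecture~\ref{conj:polylog_kim_symm2} for $Z = \Spec{\Zb[1/3]}$ and $n = 4$ reduces to the single assertion that $X(Z_{\pff})_{\PL,4}^{S_3} = \emptyset$ for $p = 5,7$.

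By Theorem~\ref{thm:computation_loci}, we already have $X(Z_{\pff})_{\PL,4} \subseteq \{-1\}$, and consequently $X(Z_{\pff})_{\PL,4}^{S_3} \subseteq \{-1\}$ as well. It therefore suffices to exhibit some $\sigma \in S_3$ such that $\sigma^{-1}(-1) \notin X(Z_{\pff})_{\PL,4}$; this will force $-1 \notin \sigma(X(Z_{\pff})_{\PL,4})$, and hence $-1 \notin X(Z_{\pff})_{\PL,4}^{S_3}$.

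The $S_3$-action on $X$ permutes the three removed points $\{0,1,\infty\}$, and under the resulting action on $X(\Qb)$ the orbit of $-1$ is $\{-1, 2, 1/2\}$ (the stabilizer of $-1$ being the order-$2$ subgroup generated by $z \mapsto 1/z$). In particular, the involution $\sigma \colon z \mapsto 1 - z$ sends $2 \mapsto -1$, so $\sigma^{-1}(-1) = 2$. But the proof of Theorem~\ref{thm:computation_loci} already verified, via numerical evaluation of the Coleman function~(\ref{eqn:the_function_p}) at $z = 2$, that $2 \notin X(Z_{\pff})_{\PL,4}$ for $p = 5,7$. Combined with the containments above this yields $X(Z_{\pff})_{\PL,4}^{S_3} = \emptyset$, completing the proof.

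There is no real obstacle here: the heavy lifting already resides in the antecedent computations, namely the geometric step of Proposition~\ref{prop:geom_wt4}, the explicit description of $f_{\sigma\tau}$ via $\Liu_4(3)$ and $\Liu_4(9)$ culminating in Theorem~\ref{thm:computation_wt4}, and the numerical zero-finding behind Theorem~\ref{thm:computation_loci}, together with the formal $S_3$-symmetrization framework recorded in Proposition~\ref{prop:different_conjs}. The conceptual content of the theorem is simply that at weight $4$ the polylogarithmic Chabauty--Kim locus cuts the set $X(\Zb[1/3][1/p])$ down to the single symmetry-breaking point $-1$, and imposing $S_3$-invariance then eliminates it using the information already obtained at $z = 2$.
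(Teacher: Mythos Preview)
Your proof is correct and follows essentially the same line as the paper's: invoke Theorem~\ref{thm:computation_loci} to get $X(Z_{\pff})_{\PL,4}\subseteq\{-1\}$, then use that $-1$ is not fixed by $S_3$ to conclude the symmetrized locus is empty. The only difference is that the paper argues directly from $\sigma(X(Z_{\pff})_{\PL,4})\subseteq\{\sigma(-1)\}\not\ni -1$ for any $\sigma$ moving $-1$, whereas you take the (equivalent but slightly longer) route of checking $\sigma^{-1}(-1)=2\notin X(Z_{\pff})_{\PL,4}$; the separate appeal to the numerical evaluation at $z=2$ is not needed once Theorem~\ref{thm:computation_loci} is in hand.
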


\begin{proof}
By Theorem \ref{thm:computation_loci}, $X(Z_{\pff})_{\PL,4} \subseteq \{-1\}$. But $-1$ is not fixed by the action of $S_3$, so 
$$
X(Z_{\pff})_{\PL,4}^{S_3} \colonequals \bigcap_{\sigma \in S_3} \sigma(X(Z_{\pff})_{\PL,4}) = 
\emptyset$$
for $p=5,7$. In particular, Conjecture \ref{conj:polylog_kim_symm2} and hence Conjecture \ref{conj:kim} holds in these cases.

\end{proof}

\section{Appendix: Proof that \texorpdfstring{$w_2(9)=-\frac{26}{3}$}{w2(9)=-26/3}}\label{sec:appendix}


We use the Kummer-Spence relation for the trilogarithm to show that $w_2(9)=-\frac{26}{3}$, or equivalently, that
\[
\Liu_3(9)-12 \Liu_3(3) = -\frac{26}{3} \zetau(3)
\]

The reasoning is a bit indirect, involving both $p$-adic and complex polylogarithms. Ideally, we would use a motivic version of the Kummer-Spence relation, which should be possible to prove.
However, this does not exist in the literature (nor does a $p$-adic version), and the most direct route seemed to be to cite the Kummer-Spence relation in the single-valued complex case and proceed from there. Furthermore, if we had a `single-valued' Archimedean period map $A(Z) \to \Cb$ sending $\Liu_k(z)$ to $P_k(z)$ (analogous to that in \cite{BrownSingle} for $Z=\Spec{\Zb}$), we could directly prove the desired identity using Lemma \ref{lemma:complex_ident}. However, lacking such a period map, we found it most convenient to go via a $p$-adic identity and then use the map $\mathrm{per}_{p}$.

Applying $\mathrm{per}_p$, it suffices to prove the corresponding identity
\[
\Lip_3(9)-12 \Lip_3(3) = -\frac{26}{3} \zetap(3)
\]
for any prime $p>3$ for which $\zetap(3) \neq 0$. This is known for any regular prime, so we may take, for example, $p=5$.

By \cite[Proposition 6.1]{Coleman} for $k=3$ and $m=2$, we have
\[
\Lip_3(9) = 4 \Lip_3(3) + 4 \Lip_3(-3),
\]
so one may easily check that it is equivalent to prove
\[
\Lip_3(-3)-2\Lip_3(3) = -\frac{13}{6} \zetap(3).
\]

\begin{rem}\label{rem:ident_already}
We note that this identity was already mentioned on p.6 of \cite{GanglZagier} in the form ``$\mathcal{L}_3(\xi_2)=\frac{13}{6}\zeta(3)$,'' although no proof was given (and according to one of the authors, it may have been verified only numerically).
\end{rem}

\begin{defn}
Following \cite{ZagierAppendix}, we set
\[
P_3(z) = \mathrm{Re}\left(\Li_3(z)-\Li_2(z)\log|z|+\frac{1}{3} \log|1-z|\log^2|z|\right).
\]
\end{defn}

\begin{lemma}\label{lemma:complex_ident}
We have
\[
-\frac{13}{6} \zeta(3) = P_3(-1/3)-2P_3(1/3).\]
\end{lemma}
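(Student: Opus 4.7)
The plan is to apply the Kummer-Spence nine-term functional equation for the trilogarithm to the specialization $x = -1$, $y = 1/3$, as suggested to the authors by Gangl (see Section \ref{sec:acknowledgements}). The key observation is that $P_3$ is, by construction, the single-valued, correction-free analogue of $\mathrm{Re}(\Li_3)$: every polynomial functional equation that $\Li_3$ satisfies modulo products of logarithms and lower-weight polylogarithms holds \emph{exactly} for $P_3$, with all correction terms absent (see \cite{ZagierAppendix}). In particular, the Kummer-Spence identity, whose original formulation for $\Li_3$ carries a cubic-in-$\log$ correction, holds for $P_3$ as a clean $\Qb$-linear relation equal to a rational multiple of $\zeta(3)$.

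First, I would write down the Kummer-Spence identity in its $P_3$-form: this is a specific $\Qb$-linear relation among $P_3$ evaluated at nine rational functions of two variables $x,y$, equated to a rational multiple of $\zeta(3)$. Second, I would substitute $x = -1$, $y = 1/3$ and evaluate each of the nine arguments explicitly. Third, using the standard functional equations of $P_3$ (in particular the inversion symmetry $P_3(1/z) = P_3(z)$ valid for real $z \neq 0$), together with the special values $P_3(0) = 0$ and $P_3(1) = \zeta(3)$, the nine resulting terms should collapse into a $\Qb$-linear combination of $P_3(1/3)$, $P_3(-1/3)$, and $\zeta(3)$. A final rearrangement then produces the stated identity.

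The main obstacle is purely computational bookkeeping: one must verify that after the specialization, each of the nine rational-function arguments lies (possibly after inversion) in $\{1/3,-1/3,1\}$, and that the rational coefficients combine to yield exactly $1$, $-2$, and $-13/6$ on $P_3(-1/3)$, $P_3(1/3)$, and $\zeta(3)$, respectively. Since Gangl's hint already pinpoints the correct specialization, this reduces to a finite, explicit calculation.
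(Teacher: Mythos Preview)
Your approach is exactly the one in the paper: specialize the Kummer-Spence nine-term relation for $P_3$ at $x=-1$, $y=1/3$, then simplify using inversion. One correction to your bookkeeping: the nine arguments at this specialization are $-1,\ 1/3,\ 1/3,\ -1,\ 3,\ -1,\ -1/3,\ -3,\ -1/3$, so after applying $P_3(1/z)=P_3(z)$ everything lands in $\{1/3,\,-1/3,\,-1\}$, not $\{1/3,\,-1/3,\,1\}$. You therefore need the value $P_3(-1)=-\tfrac{3}{4}\zeta(3)$, which the paper extracts from the distribution relation $P_3(z^2)=4\bigl(P_3(z)+P_3(-z)\bigr)$ at $z=1$; with that extra ingredient the coefficients combine to give the stated identity.
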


\begin{proof}
By \cite{ZagierAppendix}, Section 7, Example 3, we have the ``Kummer-Spence'' functional equation
\begin{align*}
    2P_3(x) + 2P_3(y) + 2P_3\left(\frac{x(1-y)}{x-1}\right) + 2P_3\left(\frac{y(1-x)}{y-1}\right) + 2P_3\left(\frac{1-x}{1-y}\right)\\ + 2P_3\left(\frac{x(1-y)}{y(1-x)}\right)
    - P_3\left(xy\right) - P_3\left(\frac{x}{y}\right) - P_3\left(\frac{x(1-y)^2}{y(1-x)^2}\right) = 2 \zeta(3).
\end{align*}

Plugging in $x=-1$ and $y=1/3$, we get
\begin{eqnarray*}
2 \zeta(3) &=& 2P_3(-1)+2P_3(1/3)+2P_3(1/3)+2P_3(-1)+ 2P_3(3)+\\
&& 2P_3(-1)-P_3(-1/3)-P_3(-3)-P_3(-1/3)\\
&=& 6P_3(-1)+4P_3(1/3)+2P_3(3)-P_3(-3)-2P_3(-1/3)
\end{eqnarray*}

Applying the inversion relation $P_3(x)=P_3(1/x)$, we get
\[
2 \zeta (3) = 6P_3(-1)+6P_3(3)-3P_3(-3).
\]

From the dispersion relation, we get $P_3(1)=P_3((-1)^2) = 4P_3(1)+4P_3(-1)$, so $P_3(-1) = -\frac{3}{4}\zeta(3)$, hence $6P_3(-1) = -\frac{9}{2} \zeta(3)$. Therefore, we get
\[
2 \zeta (3) = -\frac{9}{2}\zeta(3)+6P_3(1/3)-3P_3(-1/3),
\]
or
\[
-\frac{13}{6} \zeta(3) = P_3(-1/3)-2P_3(1/3).
\]
\end{proof}

\begin{prop}\label{prop:p_adic_ident}
We have
\[
\Lip_3(-3)-2\Lip_3(3) = -\frac{13}{6} \zetap(3).
\]
\end{prop}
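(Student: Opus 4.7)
The plan is to imitate the proof of Lemma \ref{lemma:complex_ident} in the $p$-adic setting, using an appropriate Coleman-theoretic replacement for Zagier's single-valued function $P_3$. Specifically, I would define a $p$-adic analog
\[
P_3^{(p)}(z) := \Lip_3(z) - \Lip_2(z)\logp(z) + \tfrac{1}{3}\logp(1-z)\logp(z)^2,
\]
obtained from Zagier's definition by dropping the real part and replacing each complex polylogarithm/logarithm by its Coleman counterpart. The goal would then be to verify that $P_3^{(p)}$ satisfies the same functional equations as $P_3$, apply them at $x=-1$, $y=1/3$, and unwind the definition to recover a statement about $\Lip_3(\pm 3)$.

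The first substantive step is to establish a $p$-adic Kummer--Spence identity for $P_3^{(p)}$. The $p$-adic inversion relation $P_3^{(p)}(z)=P_3^{(p)}(1/z)$ and the duplication relation should follow directly from the corresponding relations for the Coleman polylogarithms (for instance, \cite[Proposition 6.1]{Coleman}) together with straightforward $\logp$-bookkeeping. The Kummer--Spence relation itself can be checked by differentiating both sides: each summand $P_3^{(p)}(\phi(x,y))$ has total differential expressible in $\logp$, $\Lip_1$, and $\Lip_2$ via the defining ODE for Coleman polylogarithms, and the relation reduces to a purely algebraic identity among rational functions — the exact same computation as in the complex case. Once both sides have equal derivatives in $x$ and $y$, the difference is a locally analytic constant, which one pins down by evaluating at a convenient special point (for example, a tangential base point such as $x=y$), the constant of integration being $2\zetap(3)$ by matching values at the diagonal. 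This is the Besser-style trick alluded to in Section \ref{sec:acknowledgements}: once derivatives match and one constant of integration is fixed, the complex and $p$-adic identities take exactly the same shape.

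Granted the $p$-adic Kummer--Spence relation, the rest of the proof mirrors Lemma \ref{lemma:complex_ident} verbatim. Substituting $x=-1$, $y=1/3$ and applying the $p$-adic inversion relation collapses the left-hand side to $6P_3^{(p)}(-1)+6P_3^{(p)}(3)-3P_3^{(p)}(-3)$. The duplication relation at $z=1$ combined with $\Lip_k(1)=\zetap(k)$ gives $P_3^{(p)}(-1)=-\tfrac{3}{4}\zetap(3)$, yielding
\[
-\tfrac{13}{6}\zetap(3)=P_3^{(p)}(-1/3)-2P_3^{(p)}(1/3).
\]
Unwinding the definition of $P_3^{(p)}$ and using the inversion relation for $\Lip_3$ itself (which includes a $-\tfrac{1}{6}\logp(z)^3$ term) to rewrite $\Lip_3(\pm 1/3)$ as $\Lip_3(\mp 3)$ plus elementary corrections, one collects the $\logp$ and $\Lip_2$ contributions and checks that they cancel, leaving exactly $\Lip_3(-3)-2\Lip_3(3)=-\tfrac{13}{6}\zetap(3)$.

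The main obstacle is the verification of the $p$-adic Kummer--Spence identity with the correct constant. The derivative computation is mechanical, but producing the constant $2\zetap(3)$ requires evaluating both sides at a base point where enough terms vanish or combine transparently; a clean choice is a tangential direction at $0$ or $1$, where the limiting behavior of Coleman polylogarithms is known. A secondary nuisance is managing the many $\logp$ and $\Lip_2$ terms in the final unwinding step, but these collapse cleanly because they are the same terms that cancel in the complex case once the real-part operator is stripped away.
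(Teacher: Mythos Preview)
Your approach is plausible in outline but genuinely different from the paper's, and you underestimate the work in the key step while misidentifying the ``Besser trick.''

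The paper does \emph{not} reprove Kummer--Spence $p$-adically. Instead it runs a regulator-transfer argument: the complex identity of Lemma~\ref{lemma:complex_ident} shows that the element $[-3]_3 - 2[3]_3 + \tfrac{13}{6}[1]_3$ of de Jeu's $H^1(\mathcal{M}^\bullet_3(\Zb[1/6]))$ has trivial Borel regulator; by de Jeu's injectivity theorem (\cite[Theorem~3.15]{deJeu95}) the element itself is zero in this motivic group; and then the Besser--de Jeu formula (\cite[Proposition~7.14]{BdJ04}) identifies the syntomic regulator with a multiple of the single-valued $p$-adic trilogarithm $L_3(z) = \Lip_3(z) - \Lip_2(z)\logp(z) + \tfrac{1}{2}\Lip_1(z)\logp(z)^2$, yielding $L_3(-3) - 2L_3(3) + \tfrac{13}{6}L_3(1) = 0$. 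The unwinding from $L_3$ to $\Lip_3$ is then elementary. This transfer---complex identity $\Rightarrow$ motivic vanishing via archimedean injectivity $\Rightarrow$ $p$-adic identity via the syntomic regulator---is the actual trick credited to Besser in Section~\ref{sec:acknowledgements}, not a derivative-matching argument.

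Your direct route requires establishing the $p$-adic Kummer--Spence relation with the correct constant from scratch. The paper explicitly remarks (just before Lemma~\ref{lemma:complex_ident}) that neither a motivic nor a $p$-adic version of Kummer--Spence is in the literature, and chose the indirect route precisely to avoid this. The differential identity you invoke is indeed the same rational-function computation as in the complex case, so that part is fine; the real difficulty is pinning down the constant $2\zetap(3)$. Your suggestion to evaluate at a tangential base point or along the diagonal involves nine terms whose arguments simultaneously degenerate to $0$, $1$, or $\infty$, and extracting the correct tangential limits of Coleman functions at each of these is a genuine computation you have not carried out. It is exactly this work that the paper's regulator argument buys its way out of by citing two existing theorems.
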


\begin{proof}
We consider the element $[-3]_3-2[3]_3 + \frac{13}{6}[1]_3$ of $H^1(\mathcal{M}^{\bullet}_3(\Zb[1/6]))$ of \cite{deJeu95}. By Remark 5.2 of loc. cit., Lemma \ref{lemma:complex_ident} implies that this element has trivial regulator. By Theorem 3.15 of loc. cit., the regulator is injective, so this element is zero in $H^1(\mathcal{M}^{\bullet}_3(\Zb[1/6]))$.

By \cite[Proposition 7.14]{BdJ04}, the syntomic regulator on $H^1(\mathcal{M}^{\bullet}_3(\Zb[1/6]))$ is a multiple of the ``single-valued'' $p$-adic trilogarithm $L_3$, defined as
\[
L_3(z) \colonequals \Lip_3(z)-\Lip_2(z)\logp(z) + \frac{1}{2} \Lip_1(z) \logp(z)^2.
\]
Therefore, we get the identity
\[
L_3(-3)-2L_3(3) + \frac{13}{6} L_3(1) = 0.
\]

Next, note that $\Delta'(\Liu_2(-3)-2\Liu_2(3))=0$ by a simple computation with Proposition \ref{prop:polylog_coprod}, hence $\Liu_2(-3)=2\Liu_2(3)$ because $E_2(\Zb[1/6])=K_3(\Zb[1/6])_{\Qb}=0$. Applying $\mathrm{per}_{p}$, we get
\[
\Lip_3(-3)=2\Lip_2(3).
\]

Next, note that
\begin{eqnarray*}
L_3(-3)-2L_3(3) &=&
\left(\Lip_3(-3)-\Lip_2(-3)\logp(-3) + \frac{1}{2} \Lip_1(-3) \logp(-3)^2\right)
\\
&& -
2\left(\Lip_3(3)-\Lip_2(3)\logp(3) + \frac{1}{2} \Lip_1(3) \logp(3)^2\right)
\\
&=&
\Lip_3(-3)-2\Lip_3(3) +\logp{3}\left(2\Lip_2(3)-\Lip_2(-3))\right)\\
&& +\frac{1}{2}\logp(3)^2\left(-\logp(4)+2\logp(-2)\right)
\\
&=&
\Lip_3(-3)-2\Lip_3(3).
\end{eqnarray*}

As well, since $\logp(1)=0$, we find that $L_3(1)=\zetap(3)$. Combining this with the above, we get the identity
\[
\Lip_3(-3)-2\Lip_3(3) = -\frac{13}{6} \zetap(3).
\]

\end{proof}

\bibliography{MTMUEIII_Refs}
\bibliographystyle{alpha}

\vfill

 \Small\textsc{D.C. Department of Mathematics, UC Berkeley, Evans Hall, Berkeley, CA 94720, USA}. {Email address:} \texttt{corwind@alum.mit.edu}

\Small\textsc{I.D. Department of Mathematics, Ben Gurion University of the Negev, Deichmann Building for Mathematics, Be'er Sheva 8410501, Israel}. {Email address:} \texttt{ishaidc@gmail.com}

\end{document}